\newtheorem{assumption}{Assumption}[section]
\newcommand{\Pbb}{\mathbb{P}}
\newcommand{\R}{\mathbb{R}}
\newcommand{\Rext}{\R\cup\{+\infty\}}
\newcommand{\abss}[1]{\vert#1\vert}
\newcommand{\set}[1]{\left\{#1\right\}}
\newcommand{\sets}[1]{\{#1\}}
\newcommand{\norm}[1]{\left\Vert#1\right\Vert}
\newcommand{\norms}[1]{\Vert#1\Vert}
\newcommand{\Eproof}{\hfill $\square$}
\newcommand{\prox}{\mathrm{prox}}
\newcommand{\proj}{\mathrm{proj}}
\newcommand{\relint}[1]{\mathrm{ri}\left(#1\right)}
\newcommand{\argmin}{\mathrm{arg}\!\displaystyle\min}
\newcommand{\dom}[1]{\mathrm{dom}(#1)}
\newcommand{\zero}[1]{{\boldsymbol{0}}}
\newcommand{\expect}[2]{\mathbb{E}_{#1}\left[#2\right]}
\newcommand{\iprod}[1]{\left\langle #1\right\rangle}
\newcommand{\iprods}[1]{\langle #1\rangle}
\newcommand{\ri}[1]{\mathrm{ri}\left(#1\right)}
\newcommand{\Ec}{\mathcal{E}}
\newcommand{\Xc}{\mathcal{X}}
\newcommand{\Yc}{\mathcal{Y}}
\newcommand{\Dc}{\mathcal{D}}
\newcommand{\Lc}{\mathcal{L}}
\newcommand{\Rc}{\mathcal{R}}
\newcommand{\Gc}{\mathcal{G}}
\newcommand{\Tc}{\mathcal{T}}
\newcommand{\Vc}{\mathcal{V}}
\newcommand{\Lb}{\mathbf{L}}
\newcommand{\Pc}{\mathcal{P}}
\newcommand{\Kc}{\mathcal{K}}
\newcommand{\Oc}{\mathcal{O}}
\newcommand{\BigO}[1]{\mathcal{O}\left(#1\right)}
\newcommand{\myeq}[2]{\vspace{-0.5ex}
\begin{equation}\label{#1}
{#2}
\vspace{-0.5ex}
\end{equation}
}
\newcommand{\myeqn}[1]{\vspace{-0.5ex}
\begin{equation*}
{#1}
\vspace{-0.5ex}
\end{equation*}
}
\newcommand{\kdist}[2]{\mathrm{dist}_{#1}\left(#2\right)}
\title{A New Primal-Dual Algorithm for A Class of Nonlinear Compositional Convex Optimization Problems}
\author{
Yuzixuan Zhu, Deyi Liu, and Quoc Tran-Dinh\thanks{Department of Statistics and Operations Research, 
The University of North Carolina at Chapel Hill (UNC), 318-Hanes Hall, Chapel Hill, NC27599-3260, USA. 
\newline Corresponding author: {\tt quoctd@email.unc.edu}.} 
}
\date{Received: date / Accepted: date}
\begin{document}

\maketitle

\begin{abstract}
We develop a novel primal-dual algorithm to solve a class of nonsmooth and nonlinear compositional convex minimization problems, which covers many existing and brand-new models as special cases.
Our approach relies on a combination of a new nonconvex potential function, Nesterov's accelerated scheme, and an adaptive parameter updating strategy.
Our algorithm is single-loop and has low per-iteration complexity.
Under only general convexity and mild assumptions, our algorithm achieves $\BigO{1/k}$ convergence rates through three different criteria: primal objective residual, dual objective residual, and primal-dual gap, where $k$ is the iteration counter.
Our rates are both \textit{ergodic} (i.e., on an averaging sequence) and \textit{non-ergodic} (i.e., on the last-iterate sequence).
These convergence rates can be accelerated up to $\BigO{1/k^2}$ if only one objective term is strongly convex (or equivalently, its conjugate is $L$-smooth).
To the best of our knowledge, this is the first algorithm achieving optimal rates on the primal last-iterate sequence for nonlinear compositional convex minimization.
As a by-product, we specify our algorithm to solve a general convex cone constrained program with both ergodic and non-ergodic rate guarantees.
We test our algorithms and compare them with two recent methods on a binary classification and a convex-concave game model.

\vspace{1ex}
\noindent\textbf{Keywords:}
Nonlinear compositional convex minimization; 
convex-concave minimax problem;
primal-dual algorithm; 
optimal convergence rate; 
Nesterov's accelerated scheme; 
convex cone constrained program.
\end{abstract}

\noindent
\begin{AMS}
90C25, 90C06, 90-08
\end{AMS}

\section{Introduction}\label{sec:intro}
The goal of this paper is to develop a novel primal-dual algorithm to solve the following nonsmooth and nonlinear compositional convex minimization problem:
\myeq{eq:primal_cvx}{
	\Pc^{\star} := \min_{x\in \R^p} \Big\{\Pc(x) := F(x) + H\left(g(x)\right)\Big\}, \tag{P}
}
where $F$, $H$, and $g$ satisfy the following assumptions:
\begin{enumerate}
\item $F(x) := f(x) + h(x)$, where $f:\ \R^p \to \R$ is $L_f$-smooth and convex, and $h :\ \R^p \to \R\cup\sets{+\infty}$ is proper, closed, and convex, but not necessarily smooth;
\item $H :\ \R^n \to \R\cup\sets{+\infty}$ is proper, closed, and convex, but not necessarily smooth;
\item $g : \R^p \to \R^n$ is smooth and possibly nonlinear such that $H(g(\cdot))$ is convex.
\end{enumerate}
The corresponding dual problem of \eqref{eq:primal_cvx} is also convex and can be written as
\myeq{eq:dual_cvx}{
	\Dc^{\star} := \min_{y\in\R^n} \Big\{\Dc(y) := G(-y) + H^{*} (y) \Big\}, \quad\text{where} \ G(y) := \max_{x \in \R^p} \big\{ \iprods{y, g(x)} - F(x) \big\} \tag{D}
}
where $H^{*}$ is the Fenchel conjugate of $H$.

Let $\widetilde{\Lc}(x, y) := F(x) + \iprods{g(x), y} - H^{*}(y)$ be the Lagrange function associated with \eqref{eq:primal_cvx}, then \eqref{eq:primal_cvx} and \eqref{eq:dual_cvx} can be written into the following convex-concave minimax form:
\myeq{eq:saddle_point}{
	\min_{x \in \R^p} \max_{y \in \R^n} \Big\{ \widetilde{\Lc}(x, y) := F(x) + \iprods{g(x), y} - H^* (y) \Big\}, 	\tag{SP}
}
where $\Phi(x, y) := \iprods{y, g(x)}$ is convex in $x$ for $y \in \dom{H^{*}}$ and linear in $y$ for $x\in\dom{F}$.


\paragraph{General convex cone program}
If $H^* (y) := \delta_{\Kc^*}(y)$, the indicator of the dual cone $\Kc^*$ of a proper cone $\Kc$ in $\R^n$, then \eqref{eq:primal_cvx} reduces to
\myeq{eq:cone_cvx_opt}{
	F^{\star} := \min_{x \in \R^p} \Big\{F(x) := f(x) + h(x)\quad \text{s.t.}\quad g(x) \in -\Kc\Big\}.  \tag{CP}
}
This formulation is a general convex cone constrained program, which covers several subclasses such as conic programs and convex programs with nonlinear convex constraints \cite{Bauschke2011}.

Note that our model \eqref{eq:saddle_point} is more general than the linear case $\Phi(x,y) := \iprods{Kx, y}$ in 
\cite{Bauschke2011,Chambolle2011,He2012b,he2016accelerated,Nesterov2005d,Nesterov2005c,sabach2020faster,Shefi2014,TranDinh2015b,tran2019non,tseng2008accelerated}, where $K$ is a given matrix.
Although the convex-linear coupling function $\Phi(x,y) := \iprods{g(x), y}$ is still special compared to general convex-concave ones studied, e.g., in  \cite{juditsky2011first,lin2020near,Nemirovskii2004,thekumparampil2019efficient,hien2017inexact,hamedani2018primal}, the model \eqref{eq:saddle_point} already covers various applications, including those in \cite{juditsky2011first,Nemirovskii2004,hien2017inexact,hamedani2018primal} and requires weaker assumptions.
Let us first review some representative applications and then discuss the limitations of existing works next. 

\paragraph{Representative applications}
If $\Phi(x,y) = \iprods{Kx, y}$, then \eqref{eq:saddle_point} already covers various applications in signal and image processing, compressive sensing, machine learning, and statistics, see, e.g., \cite{Bauschke2011,Chambolle2011,Combettes2011,Esser2010a,Hastie2009}. 
When $\Phi$ is convex-linear, it additionally covers many other key applications in different fields.
For instance, the kernel matrix learning problem for support vector machine studied in \cite[problem (20)]{lanckriet2004learning} can be formulated into \eqref{eq:saddle_point}, where $\Phi$ is quadratic in $x$ (model parameters) and linear in $y$ (a kernel matrix).
Another related problem is the maximum margin clustering studied in \cite{xu2005maximum}, where the coupling objective is also linear in $y$.
Various robust optimization models relying on the well-known Wald's max-min formulation can be cast into \eqref{eq:saddle_point}, where $\Phi$ linearly depends on an uncertainty $y$ as $\Phi(x,y) = \iprods{g(x), y}$, see, e.g., \cite{Ben-Tal2009}.
The generative adversarial networks (GANs) problem involving Wasserstein distances studied in \cite{arjovsky2017wasserstein} can also be formulated as a special case of \eqref{eq:saddle_point} when the discriminator is linear and the generator can be nonlinear w.r.t. their parameters.
This model is also related to optimal transport problems as shown in \cite{farnia2018convex}.
Other applications of \eqref{eq:saddle_point} in machine learning, distributionally robust optimization, game theory, and image and signal processing can be found, e.g., in \cite{Facchinei2003,Kim2008,Peyre2019,rahimian2019distributionally,Shapiro2004}.
It is also worth noting that \eqref{eq:saddle_point} and its special case \eqref{eq:cone_cvx_opt} can serve as subproblems in several nonconvex-concave minimax and nonconvex optimization methods such as proximal-point, inner approximation, and penalty-based schemes, see, e.g., \cite{boob2019proximal,lin2019gradient,TranDinh2012}.

\paragraph{Limitation of existing work}
Methods for solving  \eqref{eq:saddle_point} and its primal problem \eqref{eq:primal_cvx} when $\Phi$ is bilinear are well-developed, see, e.g., \cite{Chambolle2011,combettes2012primal,Davis2014a,Esser2010a,He2012b,Nesterov2005d,Nesterov2005c,TranDinh2015b,tran2019non}.
However, when $\Phi$ is no longer bilinear, algorithms for solving   \eqref{eq:saddle_point} remain limited, see \cite{thekumparampil2019efficient,hamedani2018primal,zhao2019optimal} and their subsequent references.
We find that existing works have the following limitations.

$\diamond$~\textbf{Model assumptions.}
Gradient-based methods such as \cite{lin2020near,hien2017inexact,yang2020catalyst,zhao2019optimal} require $\nabla_x{\Phi}$ and $\nabla_y{\Phi}$ to be uniformly Lipschitz continuous on both $x$ and $y$, which unfortunately excludes some important cases, e.g., the convex cone constrained problem \eqref{eq:cone_cvx_opt}, where $\nabla_x{\Phi}(x,y) = g'(x)^{\top}y$, which is not uniformly Lipschitz continuous on $x$ for all $y$ (see Assumption~\ref{as:A2}). 
In addition, if $H^{*}$ is not strongly convex or restricted strongly convex as in \cite{du2018linear,lin2020near,thekumparampil2019efficient,wang2020improved}, then $H\circ g$ in \eqref{eq:primal_cvx} can be nonsmooth,  which creates several challenges for first-order methods.

$\diamond$~\textbf{High per-iteration complexity.}
Several methods, including \cite{lin2020near,Nemirovskii2004,hien2017inexact,yan2020sharp,yang2020catalyst}, require double loops, where the inner loop approximately solves a subproblem, e.g., the concave maximization problem in $y$, and the outer loop handles the minimization problem in $x$.
These methods can be viewed as an inexact first-order scheme to solve \eqref{eq:primal_cvx}, where the complexity of each outer iteration is often high.
In addition, related parameters such as the inner iteration number are often chosen based on some convergence  bounds, and may depend on a desired accuracy.
This dependence requires sophisticated hyper-parameter tuning strategy to achieve good performance, and it is often challenging to implement in practice. 
For the special case \eqref{eq:cone_cvx_opt}, penalty and augmented Lagrangian approaches (including alternating minimization and alternating direction methods of multipliers (ADMM)) require to solve expensive subproblems \cite{xu2019iteration}. 
There exist very limited single-loop algorithms, e.g., \cite{lin2020near,mokhtari2020unified,mokhtari2020convergence,hamedani2018primal} for general convex-concave minimax problems, and \cite{xu2017first} for a special case of \eqref{eq:cone_cvx_opt}.  
Our method in this paper belongs to a class of single-loop primal-dual algorithms.

$\diamond$~\textbf{Convergence guarantees.}
Subgradient and mirror descent methods such as \cite{juditsky2011first,Nemirovskii2004} often have slow convergence rates compared to gradient and accelerated gradient-based methods \cite{Nesterov2004}.
Hitherto, existing works can only show the best known convergence rates on \textit{ergodic} (or, \textit{averaging}) sequences, via a gap function (\emph{cf.} \eqref{eq:gap_func}), see, e.g., \cite{chen2017accelerated,juditsky2011first,mokhtari2020convergence,Nemirovskii2004,hien2017inexact,Monteiro2010a,Monteiro2011,xu2017first,hamedani2018primal}.
It means that the convergence guarantee is based on an averaging or a  weighted averaging sequence of all the past iterates.
In practical implementation, however, researchers often report performance on the \textit{non-ergodic} (or, the \textit{last-iterate}) sequence, which may only have asymptotic convergence or suboptimal rate compared to the averaging one.
As indicated in \cite{golowich2020last}, the theoretical guarantee on the last-iterate sequence can significantly be slower than the averaging ones.
To achieve faster convergence rates on the last iterates, as shown in \cite{tran2019non,valkonen2019inertial}, one needs to fundamentally redesign the underlying algorithm.
Note that averaging sequences break desired structures of final solutions such as sparsity, low-rankness, or  sharp-edged structure required in many applications, including imaging science.

These three major limitations of existing works motivate us to conduct this research and develop a novel primal-dual algorithm, which affirmatively solves the above challenges.

\paragraph{Our approach}
Problem \eqref{eq:primal_cvx} is much more challenging to solve than its linear case $g(x) = Kx$, especially when the dual domain of \eqref{eq:dual_cvx} is unbounded.
Our approach relies on a novel combination of different techniques.
Firstly, we introduce a surrogate $\Lc$ of the Lagrange function $\widetilde{\Lc}$ in \eqref{eq:saddle_point} and a new potential function $\Lc_{\rho}$ (see Subsect.~\ref{subsec:potential_func}).
Unlike existing potential functions in convex optimization, $\Lc_{\rho}$ is nonconvex in $x$. 
Secondly, we alternatively minimize $\Lc_{\rho}$ w.r.t. to its auxiliary variable $s$ and the primal variable $x$.
The subproblem in $x$ is linearized to use the proximal operator of $h$ and $\nabla{f}$.
Thirdly, we also utilize Nesterov's accelerated momentum step with a new step-size rule to obtain optimal convergence rates.
Finally, we exploit a homotopy strategy developed in \cite{TranDinh2015b,tran2019non} to dynamically update the involved parameters in an explicit manner.
Compared to standard augmented Lagrangian methods, e.g., in \cite{Bertsekas1996a,Rockafellar1976a,xu2017first}, our approach is fundamentally different since it relies on a nonconvex potential function and combines several classical and new techniques in one.

\paragraph{Our contribution}
The contribution of this paper can be summarized as follows:
\begin{itemize}  
\item[$\mathrm{(a)}$] 
We develop a novel single-loop primal-dual algorithm, Algorithm~\ref{alg:A1}, to solve  \eqref{eq:primal_cvx}, \eqref{eq:dual_cvx}, and \eqref{eq:saddle_point} simultaneously, which covers four different variants.
We introduce a new nonconvex potential function to analyze the convergence of our algorithms.

\item[$\mathrm{(b)}$]
We establish the $\BigO{1/k}$ optimal  convergence rates for Algorithm~\ref{alg:A1} in the general convex-linear case on three different criteria:  primal objective residual, dual objective residual, and primal-dual gap, where $k$ is the iteration counter.
Our sublinear convergence rates are achieved via both the averaging sequences and the primal last-iterate sequence, which we call ergodic and semi-ergodic rates, respectively.

\item[$\mathrm{(c)}$]
When $F$ is strongly convex, by deriving new parameter update rules, we establish the $\BigO{1/k^2}$ optimal convergence rates for Algorithm~\ref{alg:A1} on the same three criteria: primal objective residual, dual objective residual, and primal-dual gap.
Again, our convergence rates are achieved via both averaging and primal last-iterate sequences.

\item[$\mathrm{(d)}$]
We specify Algorithm~\ref{alg:A1} to solve the general convex cone constrained program \eqref{eq:cone_cvx_opt}, where our optimal convergence rates, both ergodic and non-ergodic, on the primal objective residual and primal feasibility violation are established.
\end{itemize}  

\paragraph{Comparison with the most related work}
Problem \eqref{eq:saddle_point} can be cast into a special variational inequality (VIP) or a maximally monotone inclusion, where several methods can be applied to solve it, see, e.g., \cite{Bauschke2011,Facchinei2003,juditsky2011solving,kolossoski2017accelerated,Monteiro2010a,Monteiro2011}.
However, the following aspects make our method standout from existing works on the setting \eqref{eq:primal_cvx} and its minimax formulation \eqref{eq:saddle_point}.

Firstly, the main assumption of the VIP approach is the uniformly Lipschitz continuity of the underlying monotone operator, which unfortunately does not hold in our setting (as stated in Assumption~\ref{as:A2}(b,ii)).
Secondly, Algorithm \ref{alg:A1} is different from those in \cite{Bauschke2011,Facchinei2003,Monteiro2010a,Monteiro2011} where we focus on non-asymtopically  sublinear convergence rates under mild assumptions, instead of asymptotic or linear rates as approaches in \cite{Bauschke2011,Facchinei2003,Monteiro2010a,Monteiro2011}.
Thirdly, Algorithm \ref{alg:A1} is single-loop as opposed to double-loop ones as in  \cite{lin2020near,Nemirovskii2004,Nesterov2007a,hien2017inexact,yan2020sharp,yang2020catalyst}.
Note that single-loop algorithms are often easy to implement and extend.
Fourthly, we do not require $\nabla_y{\Phi}$ to be uniformly Lipschitz continuous in $x$ for all $y$ as in \cite{lin2020near,hien2017inexact,yan2020sharp,yang2020catalyst}, where the domain of $y$ in our setting can be unbounded.
Fifthly, compared to other single-loop methods as \cite{mokhtari2020unified,thekumparampil2019efficient,hamedani2018primal}, our rates are non-ergodic on the primal sequence.
To the best of our knowledge, this is the first work establishing such a non-ergodic rates for the non-bilinear case of \eqref{eq:saddle_point}.
Sixthly, we only focus on the general convex case, and the strongly convex case of $F$, and ignore the case when both $F$ and $H^{*}$ are strongly convex since this condition leads to a strong monotonicity of the underlying KKT system of \eqref{eq:saddle_point}, and linear convergence is often well-known \cite{Bauschke2011,Facchinei2003}.
Seventhly, our convergence guarantees are on three different standard criteria, and in a semi-ergodic sense.
Even in the ergodic sense, our parameter updates as well as assumptions are also different from those in \cite{mokhtari2020unified,hamedani2018primal} (see Theorems~\ref{thm:BigO_ergodic_rate} and \ref{thm:alg2_ergodic}).
Finally, our special setting \eqref{eq:cone_cvx_opt} remains more general than the one in \cite{xu2019iteration,xu2017first}.
Our algorithm and its convergence rates stated in Theorem~\ref{thm:conic} are still new compared to \cite{xu2019iteration,xu2017first}.
Our rates include both ergodic and non-ergodic ones as opposed to the ergodic rates in \cite{xu2017first}.


\paragraph{Paper outline}
The rest of this paper is organized as follows.
Section~\ref{sec:background} recalls some basic concepts used in this paper and introduces our new potential function.
Section~\ref{sec:alg1} develops our algorithm, Algorithm~\ref{alg:A1}, for solving \eqref{eq:saddle_point} and its primal and dual formulations.
Its convergence guarantees under different assumptions are also established.
Section~\ref{sec:extension_general_phi} specifies our method to \eqref{eq:cone_cvx_opt}.
Section \ref{sec:num_exp} provides two numerical examples to verify our theoretical results.
For the clarity of presentation, all technical proofs are deferred to the appendices.

\section{Background and New Potential Function}\label{sec:background}
This section recalls some necessary concepts and introduces a new potential function used in this paper.

\subsection{Basic concepts}
We work with Euclidean spaces $\R^p$ and $\R^n$ equipped with a standard inner product $\iprods{u, v}$ and norm $\norm{u}$.
For any nonempty, closed, and convex set $\Xc$ in $\R^p$, $\relint{\Xc}$ denotes its relative interior and $\delta_{\Xc}$ denotes its indicator function.
If $\Kc$ is a convex cone, then $\Kc^{*} := \set{w\in\R^p \mid \iprods{w,x} \geq 0,\ \forall x\in\Kc}$ defines its dual cone.
For any proper, closed, and convex function $f: \R^p \to\Rext$, $\dom{f} := \set{x \in\R^p \mid f(x) < +\infty}$ is its (effective) domain, 
$f^{\ast}(y) := \sup_{x}\{ \iprods{x, y} - f(x) \}$ defines its Fenchel conjugate, $\partial{f}(x) := \{ w\in\R^p \mid f(y) - f(x) \geq \iprods{w, y-x},~\forall y\in\dom{f} \}$ stands for its subdifferential at $x$, 
and $\nabla{f}$ is its gradient or subgradient. 
We also use $\prox_{f}(x) := \mathrm{arg}\min_{y}\{ f(y) + \tfrac{1}{2}\norms{y-x}^2 \}$ to define the proximal operator of $f$.
If $f = \delta_{\Xc}$, then $\prox_f$ reduces to the projection $\proj_{\Xc}$ onto $\Xc$.
For a differentiable vector function $g : \R^p\to\R^n$, $g'(\cdot) \in \R^{n \times p}$ denotes its Jacobian.

A function $f : \R^p \to\R$ is called $M_f$-Lipschitz continuous on $\dom{f}$ with a Lipschitz constant $M_f \in [0, +\infty)$ if $\abss{ f(x) - f(\hat{x})} \leq M_f\norms{x - \hat{x}}$ for all $x, \hat{x} \in\dom{f}$. If $f$ is differentiable on $\dom{f}$ and $\nabla{f}$ is Lipschitz continuous with a Lipschitz constant $L_f \in [0, +\infty)$, i.e., $\norms{\nabla{f}(x) - \nabla{f}(\hat{x})} \leq L_f\norms{x - \hat{x}}$ for $x, \hat{x} \in\dom{f}$, then we say that $f$ is $L_f$-smooth. 
If $f(\cdot) - \frac{\mu_f}{2}\norms{\cdot}^2$ is still convex for some $\mu_f > 0$, then we say that $f$ is $\mu_f$-strongly convex with a strong convexity parameter $\mu_f$. 
If $\mu_f = 0$, then $f$ is just convex.

\subsection{Fundamental assumptions}\label{subsec:opt_cond}
The algorithms developed in this paper rely on the following two assumptions imposed on \eqref{eq:primal_cvx} and \eqref{eq:dual_cvx}.

\begin{assumption}\label{as:A1}
There exists $(x^{\star}, y^{\star})\in \dom{F}\times\dom{H^{*}}$ of \eqref{eq:saddle_point} such that:
\myeq{eq:saddle_point_opt}{
	\widetilde{\Lc}(x^{\star}, y) \leq \widetilde{\Lc}(x^{\star}, y^{\star}) \leq \widetilde{\Lc}(x, y^{\star}),~~\forall (x, y) \in \dom{F}\times\dom{H^{*}}.
}
Moreover, $\dom{F}\times\dom{H^{*}}\subseteq\dom{\Phi}$ with $\Phi(x,y) := \iprods{g(x), y}$ and $\widetilde{\Lc}(x^{\star}, y^{\star})$ is finite.
\end{assumption}
Assumption \ref{as:A1} is standard in convex-concave minimax problems.
It guarantees strong duality $\Pc^{\star} = -\Dc^{\star} = \widetilde{\Lc}(x^{\star}, y^{\star})$ and the existence of solutions for \eqref{eq:primal_cvx} and \eqref{eq:dual_cvx}, see, e.g., \cite{Rockafellar2004}.

\begin{assumption}\label{as:A2}
The functions $F$, $H$, and $\Phi$ in \eqref{eq:primal_cvx} satisfy the following conditions:
\begin{enumerate}
\item[$\mathrm{(a)}$]
	The functions $f$, $h$, and $H$ are proper, closed, and convex on their domain, and $F := f + h$. 
	In addition, $f$ is $L_f$-smooth for some Lipschitz constant $L_f \in [0, \infty)$.

\item[$\mathrm{(b)}$]
	$\Phi(x, y) = \iprods{y, g(x)}$ and is convex in $x$ for any $y \in\dom{H^*}$.
	In addition, it satisfies
	\begin{itemize}
	\item[${(i)}$]
	For any $y \in \dom{H^{*}}$, $g_i(\cdot)$ is $M_{g_i}$-Lipschitz continuous with a Lipschitz constant $M_{g_i} \in [0, +\infty)$ for $i=1,\cdots, n$, i.e. it holds that
	\myeq{eq:Mg_Lipschitz}{
		\abss{g_i(x) - g_i(\hat{x})} \leq  M_{g_i} \norms{x - \hat{x}}, \quad \forall x,\hat{x}\in\dom{g_i}.
	}
	\item[${(ii)}$]
	For any $y\in\dom{H^*}$, $\nabla_x\Phi(\cdot,y) = \iprods{y, g^\prime (\cdot)}$ is $\Lb_g (y)$-Lipschitz continuous with a Lipschitz modulus $\Lb_g (y)\in [0, +\infty)$ depending on $y$, i.e.:
	\myeqn{
		\norms{\nabla_x\Phi(x,y) - \nabla_x\Phi(\hat{x}, y)} \leq \Lb_g (y) \norms{x - \hat{x}},\quad \forall x, \hat{x} \in\dom{g}.
	}
	Moreover, $0 \leq \Lb_g (y) \leq L_g \norms{y}$ for a uniform constant $L_g \in [0, +\infty)$.
	\end{itemize}
\end{enumerate}
\end{assumption}
Assumption~\ref{as:A2} is relatively technical, but widely used in the literature when developing first-order primal-dual methods for \eqref{eq:primal_cvx}, see, e.g., \cite{lin2020near,hien2017inexact,yan2020sharp,yang2020catalyst,hamedani2018primal}. 
However, unlike these works, $\Lb_g (y)$ in  Assumption \ref{as:A2} depends on $y$ such that $\Lb_g(y) \leq L_b\norms{y}$, which allows us to cover, e.g., the convex cone constrained problem \eqref{eq:cone_cvx_opt} without requiring the boundedness of $\dom{H^{*}}$. 
Since $\nabla_{y_i}\Phi(x,y) = g_i(x)$, \eqref{eq:Mg_Lipschitz} is equivalent to the $M_{g_i}$-Lipschitz continuity of $g_i$ for $i = 1, \cdots, n$.
Hence, we have
\vspace{-1ex}
\myeq{eq:M_g}{
\hspace{-1ex}
\norms{\nabla_y\Phi(x,y) - \nabla_y\Phi(\hat{x},y)}^2 = \norms{g(x) - g(\hat{x})}^2 \leq M_g^2\norms{x - \hat{x}}^2, \quad \text{where}\quad M_g^2 := \sum_{i=1}^nM_{g_i}^2.
\hspace{-1ex}
\vspace{-1ex}
}
Clearly, if $\Phi(x,y) = \iprods{Kx, y}$ is bilinear, then it automatically satisfies Assumptions~\ref{as:A2}.

\subsection{Optimality condition and gap function}\label{subsec:gap}
In view of Assumption \ref{as:A1}, there exists a saddle-point $(x^{\star}, y^{\star})\in\dom{F}\times\dom{H^{*}}$ of \eqref{eq:saddle_point} that satisfies
\myeq{eq:opt_cond}{
	0 \in \partial F(x^{\star}) +  g'(x^{\star})^\top y^{\star}\quad \text{and}\quad 0 \in g(x^{\star}) - \partial H^{*} (y^{\star}).
}
To characterize saddle-points of \eqref{eq:saddle_point}, we define the following gap function, see \cite{Chambolle2011,Facchinei2003,Nemirovskii2004}:
\myeq{eq:gap_func}{
	\Gc_{\Xc\times\Yc}(x, y) := \sup\big\{\widetilde{\Lc}(x, \hat{y}) - \widetilde{\Lc}(\hat{x}, y) : \hat{x} \in\Xc, \ \hat{y} \in\Yc \big\}, 
}
where $\Xc \subseteq \dom{F}$ and $\Yc \subseteq \dom{H^{*}}$ are two nonempty and closed subsets such that $\Xc \times \Yc$ contains a saddle-point $(x^{\star}, y^{\star})$. 
It is clear that $\Gc_{\Xc\times\Yc}(x, y) \geq 0$ for any $(x, y) \in \dom{F} \times \dom{H^{*}}$.
If $(x^{\star}, y^{\star})$ is a saddle-point of \eqref{eq:saddle_point}, then $\Gc_{\Xc \times \Yc} (x^{\star}, y^{\star}) = 0$.

\subsection{New potential function and its key property}\label{subsec:potential_func}
One of the main tools to develop our algorithm is a novel potential function, which is formalized as follows.
First, we upper bound $H^{*}$ using its Fenchel conjugate, i.e., $H^{*}(y) \geq \iprods{s, y} - H(s)$ for any $s\in\dom{H}$.
Consequently, we can upper bound the Lagrange function $\widetilde{\Lc}$ of \eqref{eq:saddle_point} by 
\myeq{eq:Lag_w_s}{
	\Lc(x, s, y) := F(x) + \Phi(x, y) - \iprods{s, y} + H(s) =  F(x) + H(s) + \iprods{y,\ g(x) - s}.
}
Clearly, for any $(x, s, y) \in \dom{F}\times\dom{H}\times\dom{H^{*}}$, we have
\myeq{eq:L_to_Ltilde}{
	\widetilde{\Lc}(x, y) \leq \Lc(x, s, y)\quad \text{and}\quad \widetilde{\Lc}(x, y) = \Lc(x, s, y)~\text{iff}~s\in \partial H^* (y).
}
As a result, for any $(x, s, y) \in \dom{F} \times \dom{H} \times \dom{H^{*}}$ and $s^{\star} \in\partial{H^{*}}(y^{\star})$, \eqref{eq:saddle_point_opt} implies {\!\!\!}
\myeq{eq:saddle_pt_s}{
	\widetilde{\Lc} (x^{\star}, y) \leq \Lc(x^{\star}, s^{\star}, y) = \widetilde{\Lc} (x^{\star}, y^{\star}) = \Lc(x^{\star}, s^{\star}, y^{\star}) \leq \widetilde{\Lc}(x, y^{\star})\leq \Lc(x, s, y^{\star}).
}
Next, let us introduce a \textbf{new potential function} as follows:
\myeq{eq:aug_Lag}{
	\Lc_{\rho}  (x, s, y) := \Lc(x, s, y) + \frac{\rho}{2}\norms{g(x) - s}^2 \equiv  F(x) + H(s) + \phi_{\rho}  (x, s, y),
}
where $\rho > 0$ is a given parameter and 
\myeq{eq:phi_func}{
	\phi_{\rho}  (x, s, y) := \iprods{y,\ g(x) - s} +  \frac{\rho}{2} \norms{g(x) - s}^2.
}
%
Unlike existing potential functions used in convex optimization, if $g$ is not affine, then our potential function $\Lc_\rho$ in \eqref{eq:aug_Lag} is generally \textbf{nonconvex} in $x$.
%

\begin{lemma}\label{lm:Lipschitz_continuous}
Let $\phi_{\rho}$ be defined by \eqref{eq:phi_func} and Assumption~\ref{as:A2} holds. 
Then, $\nabla_x \phi_{\rho}  (x, s, y)  =  {[g^{\prime} (x)]}^\top \big(y + \rho[g(x) - s]\big)$ and $\nabla_s \phi_{\rho}  (x, s, y)  =  -y - \rho[g(x) - s]$.

For any $x, \hat{x} \in \dom{g}$ and $y, s, \hat{s} \in \R^n$  such that $y + \rho [g(x) - s] \in \dom{H^{*}}$, we define
\myeq{eq:Delta_rho}{
\hspace{-1ex}
	\Delta_{\rho}(\hat{x}, \hat{s}; x,s, y) := \phi_{\rho}  (\hat{x}, \hat{s}, y) \! - \!  \phi_{\rho}  (x, s, y) \! - \! \iprods{\nabla_x \phi_{\rho}  (x, s, y), \hat{x} \! - \! x} - \iprods{\nabla_s \phi_{\rho}  (x, s, y), \hat{s} \! -\!  s}.
\hspace{-3ex}	
}
Then, the following estimate holds:
\myeq{eq:pd_Lipschitz_local_onlynl}{
	0 \leq \Delta_{\rho}(\hat{x}, \hat{s}; x,s, y) - \frac{\rho}{2} \norms{[g(\hat{x}) -  \hat{s}] - [g(x) - s]}^2 \leq \frac{1}{2}\Lb_g \left(y + \rho[g(x) - s]\right) \norms{\hat{x} - x}^2,
}
where $\Lb_g\left(y + \rho[g(x) - s]\right)$ is the Lipschitz modulus defined in Item $($ii$)$ of Assumption~\ref{as:A2}$\mathrm{(b)}$.
\end{lemma}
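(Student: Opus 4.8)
The plan is to reduce the Bregman-type residual $\Delta_{\rho}$ to the Bregman divergence of the coupling $\Phi(\cdot, w)$ at a suitably shifted dual variable, and then invoke convexity and smoothness of $\Phi(\cdot, w)$ from Assumption~\ref{as:A2}(b). The gradient formulas follow from a direct chain-rule differentiation of $\phi_{\rho}$ in \eqref{eq:phi_func}: writing $r(x,s) := g(x) - s$, we obtain $\nabla_x\phi_{\rho} = [g'(x)]^{\top}y + \rho[g'(x)]^{\top}r(x,s)$ and $\nabla_s\phi_{\rho} = -y - \rho r(x,s)$, which are exactly the claimed expressions. These I would state with one line of justification.

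For the main estimate \eqref{eq:pd_Lipschitz_local_onlynl}, abbreviate $r := g(x) - s$, $\hat{r} := g(\hat{x}) - \hat{s}$, and introduce the shifted dual $w := y + \rho r$, which by hypothesis lies in $\dom{H^{*}}$. Substituting the two gradient formulas into the definition \eqref{eq:Delta_rho} and expanding each $\phi_{\rho}$ via \eqref{eq:phi_func}, I would regroup the terms into three blocks: the $y$-linear block $\iprods{y, \hat{r} - r}$, the quadratic residual block $\tfrac{\rho}{2}(\norms{\hat{r}}^2 - \norms{r}^2)$, and the Jacobian block arising from the two gradient inner products. Using $\hat{s} - s = g(\hat{x}) - g(x) - (\hat{r} - r)$, the Jacobian block rewrites as $\iprods{w,\ g(\hat{x}) - g(x) - g'(x)(\hat{x} - x)} - \iprods{w, \hat{r} - r}$. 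Since $y - w = -\rho r$, the two $\iprods{\cdot, \hat{r} - r}$ contributions combine into $-\rho\iprods{r, \hat{r} - r}$, and completing the square gives $\tfrac{\rho}{2}(\norms{\hat{r}}^2 - \norms{r}^2) - \rho\iprods{r, \hat{r} - r} = \tfrac{\rho}{2}\norms{\hat{r} - r}^2$. Collecting everything, I obtain the clean identity
\[
\Delta_{\rho}(\hat{x}, \hat{s}; x, s, y) = \frac{\rho}{2}\norms{\hat{r} - r}^2 + \iprods{w,\ g(\hat{x}) - g(x) - g'(x)(\hat{x} - x)}.
\]

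The second term is precisely $\Phi(\hat{x}, w) - \Phi(x, w) - \iprods{\nabla_x\Phi(x, w), \hat{x} - x}$, the Bregman divergence of $x\mapsto\Phi(x, w) = \iprods{w, g(x)}$ at $x$ evaluated at $\hat{x}$, because $\nabla_x\Phi(x, w) = [g'(x)]^{\top}w$. Moving $\tfrac{\rho}{2}\norms{\hat{r} - r}^2$ to the left-hand side, the estimate \eqref{eq:pd_Lipschitz_local_onlynl} becomes a two-sided bound on this Bregman divergence. Since $w \in \dom{H^{*}}$, Assumption~\ref{as:A2}(b) guarantees that $\Phi(\cdot, w)$ is convex, which yields the nonnegativity (lower bound), and that $\nabla_x\Phi(\cdot, w)$ is $\Lb_g(w)$-Lipschitz, which via the standard descent lemma yields the upper bound $\tfrac{1}{2}\Lb_g(w)\norms{\hat{x} - x}^2$. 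Substituting $w = y + \rho[g(x) - s]$ recovers \eqref{eq:pd_Lipschitz_local_onlynl} verbatim.

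I expect the main obstacle to be purely the bookkeeping in the regrouping step: the point is to shepherd the computation so that the shifted dual $w = y + \rho[g(x) - s]$ emerges naturally, since it is exactly the hypothesis $w \in \dom{H^{*}}$ that simultaneously licenses both the convexity (lower) and the $\Lb_g(w)$-smoothness (upper) bounds for $\Phi(\cdot, w)$. Once the identity above is in place, the two bounds are immediate consequences of Assumption~\ref{as:A2}(b).
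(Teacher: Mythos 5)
Your proposal is correct and follows essentially the same route as the paper: the same algebraic regrouping yields the identity $\Delta_{\rho} = \tfrac{\rho}{2}\norms{[g(\hat{x})-\hat{s}]-[g(x)-s]}^2 + \iprods{y+\rho[g(x)-s],\ g(\hat{x})-g(x)-g'(x)(\hat{x}-x)}$, and the two-sided bound on the remaining term is obtained exactly as in the paper from the convexity and $\Lb_g(\cdot)$-smoothness of $\Phi(\cdot,w)$ at the shifted dual $w = y+\rho[g(x)-s]\in\dom{H^{*}}$. The Bregman-divergence phrasing is only a cosmetic difference.
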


\begin{proof}
From \eqref{eq:phi_func}, one can directly compute $\nabla_x \phi_{\rho}  (x, s, y)  =  {[g^{\prime} (x)]}^\top \big(y + \rho[g(x) - s]\big)$ and $\nabla_s \phi_{\rho}  (x, s, y)  =  -y - \rho[g(x) - s]$ of $\phi_{\rho}$ as shown in the lemma.

Now, by the definition of $\phi_{\rho} (x, s, y)$ in \eqref{eq:phi_func} and of $\Delta_\rho$ in \eqref{eq:Delta_rho}, and the partial gradients w.r.t. $x$ and $s$ of $\phi_{\rho}$, we can explicitly write $\Delta_\rho$ as
\myeq{eq:lm_c_proof_g}{
\hspace{-6ex}
\arraycolsep=0.1em
\begin{array}{lcl}
	\Delta_{\rho}(\hat{x}, \hat{s}; x, s, y)  & = & \iprods{y,\ [g(\hat{x}) - \hat{s}] - [g(x) - s]} + \frac{\rho}{2} \left[ \norms{g(\hat{x}) - \hat{s}}^2 - \norms{g(x) - s}^2\right] \vspace{1ex}\\
	& & - {~} \iprod{y + \rho [g (x) - s],\ g^{\prime} (x)(\hat{x} - x) - (\hat{s} - s)}\vspace{1ex}\\
	& = & \iprods{y + \rho[g(x) - s],\ g(\hat{x}) - g(x) - g^{\prime} (x)(\hat{x} - x)}  +  \frac{\rho}{2} \norms{[g(\hat{x}) - \hat{s}] - [g(x) - s]}^2.
\end{array}
\hspace{-4ex}
}
By the $\Lb_g(\cdot)$-smoothness of $\nabla_x{\Phi}(x, \cdot)$ w.r.t. $x$ and $y + \rho[g(x) - s] \in \dom{H^{*}}$, we have
\myeq{eq:lm_c_proof_g_y}{
	0 \leq \iprod{y + \rho[g (x) - s],\ g(\hat{x}) - g(x) - g^{\prime} (x)(\hat{x} - x)} \leq \tfrac{1}{2}\Lb_g \left(y + \rho[g(x) - s]\right) \norms{\hat{x} - x}^2.
}
Combining \eqref{eq:lm_c_proof_g} and \eqref{eq:lm_c_proof_g_y}, we immediately get  \eqref{eq:pd_Lipschitz_local_onlynl}.
\end{proof}

\section{New Primal-Dual Algorithm and Convergence Guarantees}\label{sec:alg1}
In this section, we  develop a novel primal-dual algorithm to solve \eqref{eq:primal_cvx} and its dual form \eqref{eq:dual_cvx}. 

\subsection{Intuition and derivation}\label{subsec:derivation_of_pd_scheme1}
Our main idea is to exploit the potential function $\Lc_\rho$ defined in \eqref{eq:aug_Lag} to measure the progress of an iterate sequence $\set{(x^k, y^k)}$ generated by the proposed algorithm.
Since this function not only involves $x$ but also the dual variable $y$ and the auxiliary variable $s$, we also need to update them in an alternating manner. 

More specifically, at each iteration $k\geq 0$, given $\hat{x}^k \in \dom{F}$ and $\tilde{y}^k \in \dom{H^{*}}$, we can derive the main step of our algorithm as follows:
\begin{enumerate}
\item[\textbf{Step 1:}] 
We first update the auxiliary variable $s^{k + 1}$ by minimizing $\Lc_{\rho_k} (\hat{x}^k, s, \tilde{y}^k)$ w.r.t. $s$:
\myeq{eq:scheme_s}{
\arraycolsep=0.2em
\begin{array}{lrl}
s^{k + 1} &:=& \displaystyle\argmin_{s} \set{H(s) + \phi_{\rho_k}(\hat{x}^k, s, \tilde{y}^k) }  =  \prox_{H/\rho_k} \Big(\tfrac{\tilde{y}^k}{\rho_k} + g(\hat{x}^k) \Big).
\end{array}
}
\item[\textbf{Step 2:}]
Given $s^{k+1}$ computed by \eqref{eq:scheme_s}, we minimize $\Lc_{\rho_k} (x, s^{k + 1}, \tilde{y}^k)$ w.r.t. $x$ to obtain $x^{k+1}$. 
However, minimizing this function directly is difficult.
We instead linearize $f(\cdot) + \phi_{\rho_k} (\cdot, s^{k + 1}, \tilde{y}^k)$ at $\hat{x}^k$ and then minimize the surrogate of $\Lc_{\rho_k}$ as
\myeq{eq:scheme_x}{
\hspace{-4ex}
\arraycolsep=0.2em
\begin{array}{lrl}
	x^{k + 1} & := & \argmin_{x} \left\{ h(x) + \iprods{\nabla{f}(\hat{x}^k) + \nabla_x \phi_{\rho_k} (\hat{x}^k, s^{k + 1}, \tilde{y}^k), x - \hat{x}^k} + \tfrac{L_k}{2} \norms{x - \hat{x}^k}^2 \right\}\vspace{1ex}\\
	& = & \prox_{h/L_k} \Big(\hat{x}^k - \frac{1}{L_k}\left[\nabla{f}(\hat{x}^k) +  \nabla_x \phi_{\rho_k} (\hat{x}^k, s^{k + 1}, \tilde{y}^k)\right] \Big),
\end{array}
\hspace{-4ex}	
}
where $L_k > 0$ is a given parameter, which will be determined later.

\item[\textbf{Step 3:}] 
Next, based on our analysis in Lemma~\ref{lm:descent_property1}, we update $\tilde{y}^k$ with a  step-size $\eta_k > 0$ as
\myeq{eq:scheme_ytilde}{
	\tilde{y}^{k + 1} :=   \tilde{y}^k + \eta_k \big([g(x^{k + 1}) - s^{k + 1}] - (1 - \tau_k)[g(x^k) - s^k]\big).
}

\item[\textbf{Step 4:}] Finally, to obtain a dual convergence, given $\breve{y}^k$ and $\tau_k\in (0, 1]$, we update $\breve{y}^{k+1}$ as
\myeq{eq:y_average}{
	\breve{y}^{k+1} := (1-\tau_k)\breve{y}^k + \tau_k\big( \tilde{y}^k + \rho_k[g(\hat{x}^k) - s^{k+1}]\big). 
}
\end{enumerate}
While \textbf{Steps 1} to \textbf{4} are the main components of our algorithm, the update of $\hat{x}^k$ and the parameters is also an important part of our method, which will be specified in the sequel.

\subsection{One-iteration analysis}\label{subsec:one_iterations}
Our first key result is the following lemma, which establishes a recursive inequality of $(s^{k+1}, x^{k+1}, \tilde{y}^{k+1}, \breve{y}^{k+1})$ generated by \textbf{Steps 1} to \textbf{4} in Subsection~\ref{subsec:derivation_of_pd_scheme1}.
For the sake of presentation, we defer its proof to Appendix \ref{subsec:A_alg1_keylm}.

\begin{lemma}\label{lm:descent_property1}
Suppose that Assumptions~\ref{as:A1} and \ref{as:A2} hold.
Let $s^{k+1}$, $x^{k+1}$, $\tilde{y}^{k+1}$, and $\breve{y}^{k+1}$ be computed by  \eqref{eq:scheme_s}, \eqref{eq:scheme_x}, \eqref{eq:scheme_ytilde}, and \eqref{eq:y_average}, respectively.
Let  $\Lc$ be given in \eqref{eq:Lag_w_s} and $\Lc_\rho$ be given in \eqref{eq:aug_Lag}.
Then, for $\tau_k \in (0, 1]$ and $(x, s, y) \in \dom{F}\times\dom{H}\times\dom{H^{*}}$, we have
\myeq{eq:p2_keylemma}{
\hspace{-1ex}
\arraycolsep=0.3em
\begin{array}{rl}
	\Lc_{\rho_k} (x^{k + 1}, s^{k + 1}, y) & - {~} \Lc(x, s, \breve{y}^{k + 1}) \ \leq \ (1 - \tau_k)\big[ \Lc_{\rho_{k - 1}} (x^k, s^k, y) - \Lc(x, s, \breve{y}^k) \big]\vspace{1ex}\\
		& + {~} \frac{\tau_k^2}{2} \big(L_k - \mu_f\big) \norms{\frac{1}{\tau_k}[\hat{x}^k - (1-\tau_k)x^k] - x}^2 \vspace{1ex}\\
		& -  {~} \frac{\tau_k^2}{2}\big(L_k + \mu_h\big) \norms{\frac{1}{\tau_k}[x^{k + 1} - (1-\tau_k)x^k] - x}^2 \vspace{1ex}\\
		& - {~} \frac{(\mu_f+\mu_h)\tau_k(1-\tau_k)}{2}\norms{x^k - x}^2 + \frac{1}{2\eta_k} \big[\norms{\tilde{y}^k - y}^2 - \norms{\tilde{y}^{k + 1} - y}^2 \big] \vspace{1ex}\\
		& - {~} \frac{(1 - \tau_k)}{2} \big[\rho_{k - 1} - (1 - \tau_k)\rho_k\big] \norms{g(x^k) - s^k}^2\vspace{1ex}\\
		& - {~} \frac12 \left(L_k -  \Lb_g^k - L_f - \frac{\rho_k^2 M_g^2}{\rho_k - \eta_k}\right)\norms{x^{k + 1} - \hat{x}^k}^2, 
\end{array}
\hspace{-1ex}
}
where $\Lb_g^k :=  \Lb_g\big( \tilde{y}^k + \rho_k [g(\hat{x}^k) - s^{k + 1}] \big)$, $L_f$ is defined in Assumption~\ref{as:A2}, $L_k > 0$, $\rho_k > \eta_k > 0$, and $\mu_f \geq 0$ and $\mu_h \geq 0$ are the strong convexity parameters of $f$ and $g$, respectively.
\end{lemma}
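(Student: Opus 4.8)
The plan is to prove \eqref{eq:p2_keylemma} by evaluating every primal inequality against the Nesterov test point $\tilde{x} := (1-\tau_k)x^k + \tau_k x$ and its companion $\tilde{s} := (1-\tau_k)s^k + \tau_k s$, and by reorganizing all multipliers around the single \emph{effective} dual vector $\lambda^{k+1} := \tilde{y}^k + \rho_k[g(\hat{x}^k) - s^{k+1}]$, for which $\Lb_g^k = \Lb_g(\lambda^{k+1})$. Two structural facts drive the argument. First, by the gradient formulas in Lemma~\ref{lm:Lipschitz_continuous}, $\nabla_x\phi_{\rho_k}(\hat{x}^k, s^{k+1}, \tilde{y}^k) = [g'(\hat{x}^k)]^\top\lambda^{k+1} = \nabla_x\Phi(\hat{x}^k, \lambda^{k+1})$, so the direction used in the linearized step \eqref{eq:scheme_x} is the gradient of the \emph{convex} map $\Phi(\cdot, \lambda^{k+1}) = \iprods{\lambda^{k+1}, g(\cdot)}$ (convex because $\lambda^{k+1}\in\dom{H^{*}}$, shown below); this is exactly what lets us bypass the nonconvexity of $\Lc_{\rho_k}$ in $x$. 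Second, with $r^k := g(x^k) - s^k$, the identities $\hat{x}^k - \tilde{x} = \tau_k(\tfrac{1}{\tau_k}[\hat{x}^k - (1-\tau_k)x^k] - x)$ and $x^{k+1} - \tilde{x} = \tau_k(\tfrac{1}{\tau_k}[x^{k+1} - (1-\tau_k)x^k] - x)$ make every squared distance to $\tilde{x}$ carry a factor $\tau_k^2$, which produces the two momentum terms in \eqref{eq:p2_keylemma}.

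First I would record the optimality conditions. Step~1, \eqref{eq:scheme_s}, gives $\lambda^{k+1} = -\nabla_s\phi_{\rho_k}(\hat{x}^k, s^{k+1}, \tilde{y}^k) \in \partial H(s^{k+1})$, so $\lambda^{k+1}\in\dom{H^{*}}$ and, testing the subgradient inequality at $\tilde{s}$ and using convexity of $H$, $H(s^{k+1}) \le (1-\tau_k)H(s^k) + \tau_k H(s) + \iprods{\lambda^{k+1}, s^{k+1} - \tilde{s}}$. Step~2, \eqref{eq:scheme_x}, minimizes an $(L_k+\mu_h)$-strongly convex model; testing its optimality at $\tilde{x}$ yields $h(x^{k+1}) + \iprods{d^k, x^{k+1} - \hat{x}^k} \le h(\tilde{x}) + \iprods{d^k, \tilde{x} - \hat{x}^k} + \tfrac{L_k}{2}\|\tilde{x} - \hat{x}^k\|^2 - \tfrac{L_k}{2}\|x^{k+1} - \hat{x}^k\|^2 - \tfrac{L_k + \mu_h}{2}\|\tilde{x} - x^{k+1}\|^2$ with $d^k = \nabla f(\hat{x}^k) + \nabla_x\Phi(\hat{x}^k, \lambda^{k+1})$. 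I then bound each piece at $\tilde{x}$: $\mu_f$-strong convexity and $L_f$-smoothness of $f$ turn the $\nabla f$ terms into $f(x^{k+1}) - (1-\tau_k)f(x^k) - \tau_k f(x)$ plus the remainders $-\tfrac{\mu_f}{2}\|\tilde{x}-\hat{x}^k\|^2$, $-\tfrac{L_f}{2}\|x^{k+1}-\hat{x}^k\|^2$; convexity of $\Phi(\cdot,\lambda^{k+1})$ and Jensen's inequality bound the $\nabla_x\Phi$ terms by $(1-\tau_k)\iprods{\lambda^{k+1}, g(x^k)} + \tau_k\iprods{\lambda^{k+1}, g(x)} - \iprods{\lambda^{k+1}, g(\hat{x}^k)}$; and strong convexity of $h$ splits $h(\tilde{x})$ into $(1-\tau_k)h(x^k) + \tau_k h(x) - \tfrac{\mu_h}{2}\tau_k(1-\tau_k)\|x^k-x\|^2$. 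The $\mu_f,\mu_h$ remainders combine into the term $-\tfrac{(\mu_f+\mu_h)\tau_k(1-\tau_k)}{2}\|x^k - x\|^2$. Finally, Lemma~\ref{lm:Lipschitz_continuous} with $(\hat{x},\hat{s}) = (x^{k+1}, s^{k+1})$ and $(x,s,y)=(\hat{x}^k, s^{k+1}, \tilde{y}^k)$ (its domain hypothesis holds since $\lambda^{k+1}\in\dom{H^{*}}$) converts $\phi_{\rho_k}(x^{k+1}, s^{k+1}, \tilde{y}^k)$ into $\phi_{\rho_k}(\hat{x}^k, s^{k+1}, \tilde{y}^k) + \iprods{\nabla_x\Phi(\hat{x}^k,\lambda^{k+1}), x^{k+1}-\hat{x}^k} + \tfrac{\rho_k}{2}\|g(x^{k+1}) - g(\hat{x}^k)\|^2 + \tfrac12\Lb_g^k\|x^{k+1}-\hat{x}^k\|^2$, which is where $\Lb_g^k$ enters.

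The primal bounds naturally express the coupling through $\lambda^{k+1}$ and the point $\hat{x}^k$, whereas $\Lc_{\rho_k}$ and the carried $\Lc_{\rho_{k-1}}$ carry it through the reference multiplier $y$, so the bulk of the remaining work is to reconcile $\lambda^{k+1}$, $\tilde{y}^k$, and $y$. The averaging step \eqref{eq:y_average}, $\breve{y}^{k+1} = (1-\tau_k)\breve{y}^k + \tau_k\lambda^{k+1}$, together with linearity of $\Lc$ in its last slot (\eqref{eq:Lag_w_s}), gives $\Lc(x,s,\breve{y}^{k+1}) = (1-\tau_k)\Lc(x,s,\breve{y}^k) + \tau_k\Lc(x,s,\lambda^{k+1})$, so the $\tau_k$-proportional coupling produced above is exactly $\tau_k\Lc(x,s,\lambda^{k+1})$ and accounts for the $-\Lc(x,s,\breve{y}^{k+1})$ on the left and $-(1-\tau_k)\Lc(x,s,\breve{y}^k)$ on the right. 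For the reference-$y$ part, I use $\iprods{y, r^{k+1}} - (1-\tau_k)\iprods{y, r^k} = \tfrac{1}{\eta_k}\iprods{y, \tilde{y}^{k+1} - \tilde{y}^k}$ by \eqref{eq:scheme_ytilde}; combined with the working-multiplier leftovers and the three-point identity $\tfrac{1}{\eta_k}\iprods{\tilde{y}^{k+1} - \tilde{y}^k, y - \tilde{y}^{k+1}} = \tfrac{1}{2\eta_k}[\|\tilde{y}^k - y\|^2 - \|\tilde{y}^{k+1} - y\|^2] - \tfrac{1}{2\eta_k}\|\tilde{y}^{k+1} - \tilde{y}^k\|^2$, this delivers the telescoping dual term of \eqref{eq:p2_keylemma} and a spare $-\tfrac{\eta_k}{2}\|r^{k+1} - (1-\tau_k)r^k\|^2$.

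The step I expect to be the main obstacle is the simultaneous consolidation, in the final quadratic bookkeeping, of all $\|x^{k+1} - \hat{x}^k\|^2$ contributions into the coefficient $-\tfrac12(L_k - \Lb_g^k - L_f - \tfrac{\rho_k^2 M_g^2}{\rho_k - \eta_k})$, together with the residual cross terms left over from matching the $\lambda^{k+1}$-coupling of the primal bounds to the $y$-coupling of the potential (recall $\lambda^{k+1} - \tilde{y}^k = \rho_k[g(\hat{x}^k) - s^{k+1}]$). The mechanism is to complete the square in $r^{k+1}$ using the positive penalty $\tfrac{\rho_k}{2}\|r^{k+1}\|^2$ from $\Lc_{\rho_k}$, the positive $\tfrac{\rho_k}{2}\|g(x^{k+1}) - g(\hat{x}^k)\|^2$ from Lemma~\ref{lm:Lipschitz_continuous}, and the negative $-\tfrac{\eta_k}{2}\|r^{k+1} - (1-\tau_k)r^k\|^2$ from the dual step, leaving an effective penalty weight $\tfrac{\rho_k - \eta_k}{2}$; the surviving cross terms are then bounded by Young's inequality, $\rho_k\iprods{a,b} \le \tfrac{\rho_k^2}{2(\rho_k-\eta_k)}\|a\|^2 + \tfrac{\rho_k-\eta_k}{2}\|b\|^2$, and the $M_g$-Lipschitz estimate \eqref{eq:M_g}, $\|g(u) - g(v)\| \le M_g\|u - v\|$, with the Young parameter tuned to $\rho_k - \eta_k$. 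This forces the condition $\rho_k > \eta_k$ and produces precisely $\tfrac{\rho_k^2 M_g^2}{\rho_k - \eta_k}\|x^{k+1} - \hat{x}^k\|^2$, while the mismatch between the weight $\rho_{k-1}$ carried in $\Lc_{\rho_{k-1}}(x^k,s^k,y)$ and the weight $(1-\tau_k)\rho_k$ generated this iteration yields the feasibility term $-\tfrac{(1-\tau_k)}{2}[\rho_{k-1} - (1-\tau_k)\rho_k]\|r^k\|^2$. Gathering the $\tau_k^2$-scaled distances from the $x$-prox inequality then delivers \eqref{eq:p2_keylemma}.
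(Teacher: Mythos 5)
Your proposal is correct and follows essentially the same route as the paper's proof: the paper first establishes the generic bound \eqref{eq:keylemma_Phi2} of Lemma~\ref{le:descent_property1} with working multiplier $\tilde{y}^k$ and then takes the $(1-\tau_k,\tau_k)$ convex combination of its instances at $(x^k,s^k)$ and $(x,s)$ before reconciling with $y$ and $\breve{y}^{k+1}$, whereas you test every inequality directly at the Nesterov point $\tilde{x}$, $\tilde{s}$ and carry the effective multiplier $\lambda^{k+1}=y^{k+1}$ throughout --- a purely cosmetic reordering that uses the same ingredients (Lemma~\ref{lm:Lipschitz_continuous}, $\lambda^{k+1}\in\partial H(s^{k+1})\subseteq\dom{H^{*}}$, the $\breve{y}$-averaging identity, the three-point identity, Young with parameter $\rho_k-\eta_k$, and \eqref{eq:M_g}) and lands on the same coefficients. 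One small sign slip: the dual coupling actually produces $+\tfrac{1}{2\eta_k}\norms{\tilde{y}^{k+1}-\tilde{y}^k}^2$ (a term that must be absorbed by the $-\tfrac{\rho_k}{2}\norms{\cdot}^2$ quadratic from the convexity lower bounds), not a helpful negative spare, but your subsequent completion-of-squares with effective weight $\tfrac{\rho_k-\eta_k}{2}$ is exactly the absorption the paper performs in its $\Tc_5$ estimate, so the final coefficient $\tfrac{\rho_k^2M_g^2}{\rho_k-\eta_k}$ and the condition $\rho_k>\eta_k$ come out unchanged.
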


\subsection{The full algorithm}\label{subsec:full_algorithm}
To write our algorithm into a primal-dual form using $\prox_{h/L_k}(\cdot)$ and $\prox_{\rho_kH^{*}}(\cdot)$ as in, e.g., \cite{Chambolle2011,Esser2010a,tran2019non,hamedani2018primal}, we define a new variable
\myeq{eq:in_domH}{
y^{k + 1} := \prox_{\rho_k H^{*}} \left(\tilde{y}^k + \rho_k g(\hat{x}^k)\right).
}
Then, by Moreau's identity \cite[Theorem 14.3]{Bauschke2011}, we can show that
\myeq{eq:keylm_yk_in_domH}{
	\rho_k s^{k + 1} \stackrel{\eqref{eq:scheme_s}}=  [\tilde{y}^k\!+\!\rho_k g(\hat{x}^k)] -\prox_{\rho_k H^{*}} \left(\tilde{y}^k + \rho_k g(\hat{x}^k)\right) = [\tilde{y}^k\!+\!\rho_k g(\hat{x}^k)] - y^{k + 1}.
}
Thus we can in fact eliminate variable $s^{k + 1}$ from the expression of $x^{k + 1}$ in \eqref{eq:scheme_x} by noting that $\nabla_x \phi_{\rho_k} (\hat{x}^k, s^{k + 1}, \tilde{y}^k) = g^\prime (\hat{x}^k)^{\top} y^{k + 1}$.
Similarly, the presence of $s^k$ and $s^{k + 1}$ in the update of $\tilde{y}^{k + 1}$ in \eqref{eq:scheme_ytilde} can also be eliminated by using \eqref{eq:keylm_yk_in_domH}.

Now, to update $\hat{x}^k$, we follow Nesterov's accelerated scheme \cite{Nesterov2004} as $\hat{x}^{k+1} := x^{k+1} + \beta_{k+1}(x^{k+1} - x^k)$ for some coefficient $\beta_{k+1} \geq 0$.
We emphasize that the update of $\beta_{k+1}$ in Theorems \ref{thm:alg2_ergodic} and \ref{thm:BigO_nonegrodic_rate_str} below is new compared to existing methods even in the bilinear case.

Finally, putting all the above steps together, we obtain a complete single-loop primal-dual first-order algorithm as specified in Algorithm \ref{alg:A1}.

\begin{algorithm}[ht!]
\caption{(Single-Loop Primal-Dual First-Order Algorithm)}\label{alg:A1}
\begin{algorithmic}[1]
\normalsize
	\State {\bfseries Initialization:}
		Choose an initial point $(x^0, y^0) \in \dom{F}\times\dom{H^{*}}$.
	\State\hspace{2ex}\label{step:A1:init_var}
		Set $\hat{x}^0 := x^0$,  $\tilde{y}^0 := y^0$, and $\Theta_0 := 0$. 
		Set $\breve{y}^0 := y^0$ for Theorem~\ref{thm:BigO_nonegrodic_rate} or \ref{thm:BigO_nonegrodic_rate_str}.
   	\State\hspace{2ex}\label{step:A1:init}
		Choose $\tau_0$, $L_0$, $\rho_0$, and $\eta_0$ according to Theorem~\ref{thm:BigO_ergodic_rate}, \ref{thm:alg2_ergodic}, \ref{thm:BigO_nonegrodic_rate}, or \ref{thm:BigO_nonegrodic_rate_str}.
   	\State\textbf{For~}{$k := 0$ {\bfseries to} $k_{\max}$}
   	\State\hspace{2ex}\label{step:A1:main_step1}
		Update $\tau_k$, $L_k$, $\rho_k$, $\eta_k$, and $\beta_k$ as in Theorem~\ref{thm:BigO_ergodic_rate}, \ref{thm:alg2_ergodic}, \ref{thm:BigO_nonegrodic_rate}, or \ref{thm:BigO_nonegrodic_rate_str} consistently with Step~\ref{step:A1:init}.
		\vspace{0.5ex}
	\State\hspace{2ex}\label{step:A1:s1}
		Update
   	\vspace{-1ex}
	\myeq{eq:APD_scheme1}{
	\left\{\arraycolsep=0.2em
	\begin{array}{lcl}
		y^{k + 1} & := & \prox_{\rho_k H^*} \left(\tilde{y}^k + \rho_k g(\hat{x}^k)\right),\vspace{1ex}\\
		x^{k + 1} & := & \prox_{h/L_k} \left(\hat{x}^k - \frac{1}{L_k} [\nabla f(\hat{x}^k) + g'(\hat{x}^k)^\top y^{k + 1}]\right),\vspace{1ex}\\
		\Theta_{k+1}  & := & g(x^{k + 1}) - g(\hat{x}^k) + \frac{1}{\rho_k} (y^{k + 1} - \tilde{y}^k), \vspace{1ex}\\
		\tilde{y}^{k+1} & := &  \tilde{y}^k + \eta_k\left[\Theta_{k+1} - (1-\tau_k)\Theta_k\right], \vspace{1ex}\\
		\hat{x}^{k + 1} & := & x^{k + 1} + \beta_{k+1} (x^{k + 1} - x^k).
	\end{array}\right.
	\vspace{-0.5ex}
   	}
	\State\hspace{2ex}\label{step:A1:main_step3}
	Update $\breve{y}^{k+1} := (1-\tau_k)\breve{y}^k + \tau_ky^{k+1}$ for the variants in Theorems~\ref{thm:BigO_nonegrodic_rate} and \ref{thm:BigO_nonegrodic_rate_str}.
   	\Statex\textbf{EndFor}
\end{algorithmic}
\end{algorithm}

\paragraph{Per-iteration complexity}
The main computation of Algorithm~\ref{alg:A1} is \eqref{eq:APD_scheme1} of Step \ref{step:A1:s1}, where
\begin{enumerate}
	\item Line 1 requires one function evaluation $g(\hat{x}^k)$, which is exactly $\nabla_y\phi_{\rho_k}(\hat{x}^k,\tilde{y}^k)$, and one $\prox_{\rho_kH^{*}}(\cdot)$ operation;
	\item Line 2 needs one Jacobian-vector product $g'(\hat{x}^k)^{\top}y^{k+1}$, which is $\nabla_x{\phi_{\rho_k}}(\hat{x}^k, y^{k+1})$, one gradient $\nabla{f}(\hat{x}^k)$, and one $\prox_{h/L_k}(\cdot)$ operation;
	\item Line 3 needs one more function evaluation $g(x^{k + 1})$ if $\hat{x}^{k+1} \neq x^{k+1}$;
	\item Lines 4 and 5 use only vector additions and scalar-vector multiplications.
\end{enumerate}
This break-down of complexity shows that Algorithm \ref{alg:A1} essentially has  the same complexity as other state-of-the-art single-loop primal-dual first-order algorithms such as \cite{mokhtari2020unified,hamedani2018primal}. 

\subsection{Ergodic convergence rates}\label{subsec:alg1_thms}
Let us first analyze the ergodic  convergence rates of Algorithm~\ref{alg:A1} by setting $\tau_k = 1$ and updating other parameters accordingly for all iterations.

\subsubsection{General convex case}
When $F$ in \eqref{eq:primal_cvx} is only convex (i.e., $\mu_F = 0$), the following theorem establishes the $\BigO{1/k}$ ergodic convergence rate of Algorithm~\ref{alg:A1}, whose proof is postponed to Appendix \ref{subsec:alg1_thm_ergodic}.

\begin{theorem}\label{thm:BigO_ergodic_rate}
Suppose that Assumptions \ref{as:A1} and \ref{as:A2} hold for \eqref{eq:primal_cvx}. 
Let $\set{(x^k, y^k)}$ be generated by Algorithm \ref{alg:A1} using the following parameter update rules:
\begin{itemize}
\item\textbf{Initialization:} 
For a given $(x^0, y^0)\in\dom{F}\times\dom{H^{*}}$ and a saddle-point $(x^{\star}, y^{\star})$, let $D$ be an arbitrary upper bound constant such that $D \geq \max\sets{\norms{x^0 - x^{\star}}, \norms{y^0 - y^{\star}}, \norms{y^{\star}}}$.
Define
\myeq{eq:non_erg_T_def2}{
\left\{\begin{array}{l}
	C := \max\set{L_f + 2M_g^2 + 2, \ L_g D(L_g D + 4M_g + 2)},\vspace{1ex}\\
	\rho := 1, \qquad \eta := \frac{\rho}{2}, \quad \text{and} \quad L := L_f + \rho( C + 2M_g^2).
	\end{array}\right.
}
\item \textbf{Parameter update:} For all $k \geq 0$, we fix the parameters of Algorithm~\ref{alg:A1} at
\myeq{eq:param_update_ergodic_O1}{
	\tau_k := 1,\quad \beta_{k+1} := 0, \quad \rho_k :=  \rho,\quad L_k := L,\quad\text{and}\quad \eta_k := \eta.
}
\end{itemize}
Let $(\bar{x}^k,\ \bar{y}^k)$ be computed as $(\bar{x}^k,\ \bar{y}^k) := \frac{1}{k}\sum_{j=1}^k(x^j,\ y^j)$.
Then, for all $k \geq 1$, we have  
\myeq{eq:gap_convergence_ergodic}{
\left\{\arraycolsep=0.3em
\begin{array}{lcl}
	\Pc(\bar{x}^k) - \Pc^{\star} & \leq & \dfrac{1}{2k} \Big[ L_0 \norms{x^0 - x^{\star}}^2  + \frac{2}{\rho_0}\big(\norms{y^0} + M_H \big)^2 \Big],\vspace{1ex}\\
	\Dc(\bar{y}^k) - \Dc^{\star} & \leq & \dfrac{1}{2k} \Big[ L_0 \big(\norms{x^0} + M_{F^*}\big)^2  + \frac{2}{\rho_0}\norms{y^0 - y^{\star}}^2 \Big], \vspace{1ex}\\	
	\Gc_{\Xc\times\Yc}(\bar{x}^k,\bar{y}^k) & \leq & \dfrac{\Rc^2_{\Xc\times\Yc}}{2k}, 
\end{array}\right.
}
where  $\Gc_{\Xc \times \Yc}$ is defined by \eqref{eq:gap_func}, $\Rc^2_{\Xc\times\Yc} := \sup\big\{ L_0\norms{x - x^0}^2 + \frac{2}{\rho_0}\norms{y - y^0}^2 : x\in\Xc, y \in \Yc \big\}$, and $M_H,\ M_{F^{*}} \in [0, +\infty]$ are the Lipschitz constants of $H$ and $F^*$, respectively.
\end{theorem}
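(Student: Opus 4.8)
The plan is to collapse Lemma~\ref{lm:descent_property1} into a single telescoping estimate under the parameter rule \eqref{eq:param_update_ergodic_O1}, and then read off the three criteria from it. First I would substitute \eqref{eq:param_update_ergodic_O1} into \eqref{eq:p2_keylemma}. Since $\tau_k = 1$ annihilates every $(1-\tau_k)$-term, $\beta_{k+1}=0$ forces $\hat{x}^k = x^k$ for all $k$, $\breve{y}^{k+1} = y^{k+1}$, and $\mu_f = \mu_h = 0$; moreover $\rho = 1 > \eta = \tfrac12 > 0$ meets the lemma's requirements and $L - L_f - \rho^2 M_g^2/(\rho-\eta) = C$ by \eqref{eq:non_erg_T_def2}. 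Hence \eqref{eq:p2_keylemma} reduces to
\[
\Lc_{\rho}(x^{k+1},s^{k+1},y) - \Lc(x,s,y^{k+1}) \leq \tfrac{L}{2}\big(\norms{x^k-x}^2 - \norms{x^{k+1}-x}^2\big) + \tfrac{1}{2\eta}\big(\norms{\tilde{y}^k-y}^2 - \norms{\tilde{y}^{k+1}-y}^2\big) - \tfrac{1}{2}(C-\Lb_g^k)\norms{x^{k+1}-x^k}^2,
\]
valid for every $(x,s,y)\in\dom{F}\times\dom{H}\times\dom{H^{*}}$. The structural point I would exploit is that the right-hand side is independent of $s$, so $s$ may be chosen freely, and differently for each $k$, on the left.

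The hard part, which I expect to cost the most, is proving $\Lb_g^k \leq C$ for every $k$, so that the last term is nonpositive and can be dropped; this is exactly where the unbounded dual domain and the $y$-dependence of the modulus in Assumption~\ref{as:A2}(b,ii) bite. I would establish it by induction, coupled with boundedness of the iterates. Using \eqref{eq:opt_cond} I may take $s^{\star} = g(x^{\star}) \in \partial{H^{*}}(y^{\star})$, so that $g(x^{\star}) - s^{\star} = 0$ and $\Lc(x^{\star},s^{\star},\cdot) \equiv \Pc^{\star}$; with $(x,s,y)=(x^{\star},s^{\star},y^{\star})$ the left-hand side above is $\geq \widetilde{\Lc}(x^{k+1},y^{\star}) - \Pc^{\star} \geq 0$ by \eqref{eq:L_to_Ltilde} and \eqref{eq:saddle_pt_s}. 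Consequently the Lyapunov quantity $V_k := \tfrac{L}{2}\norms{x^k-x^{\star}}^2 + \tfrac{1}{2\eta}\norms{\tilde{y}^k-y^{\star}}^2$ is nonincreasing whenever $\Lb_g^k \leq C$. To close the bootstrap I would bound the dual iterate via the fixed-point identity $y^{\star} = \prox_{\rho H^{*}}(y^{\star}+\rho g(x^{\star}))$ (from \eqref{eq:opt_cond}); nonexpansiveness of $\prox_{\rho H^{*}}$ and \eqref{eq:M_g} give $\norms{y^{k+1}-y^{\star}} \leq \norms{\tilde{y}^k-y^{\star}} + \rho M_g\norms{x^k-x^{\star}}$. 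Feeding $V_k \leq V_0$ into this, together with $\Lb_g^k \leq L_g\norms{y^{k+1}}$ and the definition of $C$ in \eqref{eq:non_erg_T_def2}, yields $\Lb_g^k \leq C$ and advances the induction; the constant $C$ is engineered precisely so that this estimate closes.

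With the last term discarded, I would extract the main estimate. Fixing a target $(x,y)$, at each step I would pick $s_j \in \partial{H^{*}}(y^{j+1})$ (nonempty since $y^{j+1}$ is a $\prox_{\rho H^{*}}$-output), so that $\Lc(x,s_j,y^{j+1}) = \widetilde{\Lc}(x,y^{j+1})$ by \eqref{eq:L_to_Ltilde}, while $\Lc_{\rho}(x^{j+1},s^{j+1},y) \geq \widetilde{\Lc}(x^{j+1},y)$. The per-step inequality then becomes $\widetilde{\Lc}(x^{j+1},y) - \widetilde{\Lc}(x,y^{j+1}) \leq$ the two telescoping brackets; summing over $j=0,\dots,k-1$ and dropping the nonpositive end terms gives $\sum_{j=1}^{k}\big[\widetilde{\Lc}(x^{j},y) - \widetilde{\Lc}(x,y^{j})\big] \leq \tfrac12\big(L\norms{x^0-x}^2 + \tfrac{1}{\eta}\norms{y^0-y}^2\big)$. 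Dividing by $k$ and invoking convexity of $\widetilde{\Lc}(\cdot,y)$ and concavity of $\widetilde{\Lc}(x,\cdot)$ (Jensen) at the averages yields the fundamental bound
\[
\widetilde{\Lc}(\bar{x}^k,y) - \widetilde{\Lc}(x,\bar{y}^k) \leq \tfrac{1}{2k}\Big(L_0\norms{x^0-x}^2 + \tfrac{2}{\rho_0}\norms{y^0-y}^2\Big),
\]
for all $(x,y)\in\dom{F}\times\dom{H^{*}}$, where I used $L=L_0$, $\tfrac{1}{\eta}=\tfrac{2}{\rho_0}$, and $\tilde{y}^0=y^0$.

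Finally I would specialize this bound. Taking the supremum over $(x,y)\in\Xc\times\Yc$ gives the gap estimate with $\Rc^2_{\Xc\times\Yc}$ exactly as defined, since $\Gc_{\Xc\times\Yc}$ is the supremum in \eqref{eq:gap_func}. For the primal residual, $M_H$-Lipschitzness of $H$ forces $\dom{H^{*}}\subseteq\set{y:\norms{y}\leq M_H}$, whence $\Pc(\bar{x}^k) = \sup_{\norms{y}\leq M_H}\widetilde{\Lc}(\bar{x}^k,y)$; combining this with $\widetilde{\Lc}(x^{\star},\bar{y}^k) \leq \widetilde{\Lc}(x^{\star},y^{\star}) = \Pc^{\star}$ (saddle inequality) and the fundamental bound at $x=x^{\star}$, the estimate $\norms{y^0-y}\leq\norms{y^0}+M_H$ delivers the first line. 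Symmetrically, $M_{F^{*}}$-Lipschitzness of $F^{*}$ gives $\dom{F}\subseteq\set{x:\norms{x}\leq M_{F^{*}}}$; since $-\Dc(\bar{y}^k) = \min_x\widetilde{\Lc}(x,\bar{y}^k)$ while $\widetilde{\Lc}(\bar{x}^k,y^{\star}) \geq \widetilde{\Lc}(x^{\star},y^{\star}) = -\Dc^{\star}$, the fundamental bound at $y=y^{\star}$ with $\norms{x^0-x}\leq\norms{x^0}+M_{F^{*}}$ produces the dual line, completing the proof.
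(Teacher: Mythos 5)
Your proposal is correct and follows essentially the same route as the paper's proof (Lemma~\ref{le:bound_of_Lk} plus Appendix~B.2): the same induction coupling the Lyapunov descent of $L\norms{x^k-x^{\star}}^2 + \tfrac{1}{\eta}\norms{\tilde{y}^k-y^{\star}}^2$ with the prox-nonexpansiveness bound on $\Lb_g^k$, the same telescoping and Jensen step, and the same use of $M_H$ and $M_{F^{*}}$ to specialize the saddle bound to the three criteria. The only (cosmetic) differences are that you pass from $\Lc$ to $\widetilde{\Lc}$ per iteration via a $j$-dependent choice $s_j\in\partial H^{*}(y^{j+1})$ before averaging, and you bound the primal residual via $\dom{H^{*}}\subseteq\sets{\norms{y}\le M_H}$ rather than the paper's explicit test point $\breve{y}^k$ of norm $M_H$.
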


The constant $C$ in \eqref{eq:non_erg_T_def2} only requires an upper bound $D$.
In many cases, we can estimate this quantity using properties of the functions and/or constraints of \eqref{eq:primal_cvx}.
The finer $D$ is estimated, the better performance of Algorithm~\ref{alg:A1} we get.
Note that we can also choose an arbitrary $\rho > 0$ and $0 < \eta < \rho$, but we will need to recompute $C$ in \eqref{eq:non_erg_T_def2} to satisfy \eqref{eq:para_cond1}.

\subsubsection{Strongly convex case}
Now, we consider the case where $F$ in \eqref{eq:primal_cvx} is $\mu_F$-strongly convex with $\mu_F > 0$.
The following theorem establishes the $\BigO{1/k^2}$ ergodic convergence rates of Algorithm~\ref{alg:A1}, whose proof is given in Appendix \ref{subsec:alg2_ergodic_proof}.

\begin{theorem}\label{thm:alg2_ergodic}
Suppose that Assumptions \ref{as:A1} and \ref{as:A2} hold for \eqref{eq:primal_cvx}.  
Suppose further that $F$ in \eqref{eq:primal_cvx} is $\mu_F$-strongly convex with $\mu_F > 0$.
Let $\sets{(x^k, y^k)}$ be generated by Algorithm \ref{alg:A1} using the following parameter update rules:
\begin{itemize}
\item \textbf{Initialization:} 
Let $C$ be the constant computed as in \eqref{eq:non_erg_T_def2}.
Given $\rho_0 := 1$, let
\myeq{eq:scvx_ergodic_param_init}{
\left\{\begin{array}{l}
	L_0 := L_f + \rho_0(C + 2M_g^2), \vspace{1ex}\\
	P_0 := \frac{\rho_0}{2L_0}\big[ \sqrt{4L_0(\mu_f + \mu_h) + (2L_0 - \mu_f)^2} - (2L_0 - \mu_f) \big] > 0. 
\end{array}\right.
}
\item \textbf{Parameter update:} For all $k \geq 0$, we  update
\myeq{eq:scvx_ergodic_param_update}{
\left\{\begin{array}{l}
\tau_k := 1,\quad \beta_{k+1} := 0,\quad \theta_{k+1} := \frac{2L_k}{\mu_f + \sqrt{\mu_f^2 + 4L_k(L_k + \mu_h)}}, \vspace{1ex}\\
L_{k + 1} := \frac{L_k}{\theta_{k + 1}}, \quad \rho_{k + 1} := \frac{\rho_k}{\theta_{k + 1}}, \quad\text{and} \quad  \eta_{k+1} := \frac{1}{2}\rho_{k+1}.
\end{array}\right.
}
\end{itemize}
Let $\sets{(\bar{x}^k, \bar{y}^k)}$ be an ergodic sequence defined as
\myeq{eq:ergodic_O2_xbar_sbar}{
	(\bar{x}^k, \bar{y}^k) := \frac{1}{\Sigma_k} \sum_{j = 0}^{k - 1} \rho_j (x^{j+1}, y^{j+1}),\quad\text{where}\quad \Sigma_k := \sum_{j = 0}^{k - 1} \rho_j.
}
Then, for any $k \geq 1$, the following bounds hold:
\myeq{eq:ergodic_O2_result}{
\arraycolsep=0.2em
\left\{\begin{array}{rlcl}
	& \Pc(\bar{x}^k) - \Pc^{\star} & \leq & \dfrac{1}{2\rho_0k + P_0k(k - 1)} \Big[ L_0 \norms{x^0 - x^{\star} }^2  + \frac{2}{\rho_0} \big(\norms{y^0} + M_H \big)^2 \Big],\vspace{1ex}\\
	& \Dc(\bar{y}^k) - \Dc^{\star} & \leq & \dfrac{1}{2\rho_0k + P_0k(k - 1)} \Big[ L_0 \big(\norms{x^0} + M_{F^{*}} \big)^2 +  \frac{2}{\rho_0}\norms{y^0 - y^{\star} }^2 \Big], \vspace{1ex}\\
	& \Gc_{\Xc \times \Yc} (\bar{x}^k, \bar{y}^k) & \leq & \dfrac{\Rc^2_{\Xc\times\Yc}}{2\rho_0k + P_0k(k - 1)}, 
\end{array}\right.
}
where  $\Gc_{\Xc \times \Yc}$ is defined by \eqref{eq:gap_func}, $\Rc_{\Xc\times\Yc}^2$ is defined in Theorem~\ref{thm:BigO_ergodic_rate}, and $M_H,\ M_{F^*} \in [0, +\infty]$ are the Lipschitz constants of $H$ and $F^*$, respectively.
\end{theorem}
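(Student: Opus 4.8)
The plan is to specialize the one-iteration estimate of Lemma~\ref{lm:descent_property1} to the regime $\tau_k = 1$, $\beta_{k+1} = 0$ prescribed by \eqref{eq:scvx_ergodic_param_update}, and then telescope a suitably reweighted version of it. With $\beta_{k+1} = 0$ we have $\hat{x}^k = x^k$ for every $k$, and with $\tau_k = 1$ the update \eqref{eq:y_average} (equivalently Step~\ref{step:A1:main_step3}) collapses to $\breve{y}^{k+1} = y^{k+1}$, while every $(1-\tau_k)$ term in \eqref{eq:p2_keylemma} vanishes. Hence \eqref{eq:p2_keylemma} reduces to a clean one-step descent
\[
\Lc_{\rho_k}(x^{k+1}, s^{k+1}, y) - \Lc(x, s, y^{k+1}) \leq \tfrac{L_k - \mu_f}{2}\norms{x^k - x}^2 - \tfrac{L_k + \mu_h}{2}\norms{x^{k+1} - x}^2 + \tfrac{1}{2\eta_k}\big[\norms{\tilde{y}^k - y}^2 - \norms{\tilde{y}^{k+1} - y}^2\big] - \tfrac12 E_k\norms{x^{k+1} - x^k}^2,
\]
where, using $\eta_k = \tfrac12\rho_k$, the coefficient is $E_k := L_k - \Lb_g^k - L_f - 2\rho_k M_g^2$. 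The structural gain over the merely convex case is that strong convexity makes the left quadratic coefficient $L_k-\mu_f$ strictly smaller than the right one $L_k+\mu_h$, which is precisely what will allow $L_k$ (and $\rho_k$) to grow while keeping a valid descent; note that here the acceleration comes entirely from the growth of $(L_k,\rho_k)$ rather than from a momentum step.

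First I would discharge the term $-\tfrac12 E_k\norms{x^{k+1}-x^k}^2$ by showing $E_k \geq 0$. Since \eqref{eq:keylm_yk_in_domH} gives $y^{k+1} = \tilde{y}^k + \rho_k[g(\hat{x}^k) - s^{k+1}]$, we have $\Lb_g^k = \Lb_g(y^{k+1}) \leq L_g\norms{y^{k+1}}$ by Assumption~\ref{as:A2}. The relation $L_k - L_f \geq \rho_k(C + 2M_g^2)$, which follows from the initialization $L_0 = L_f + \rho_0(C+2M_g^2)$, the invariance $L_k/\rho_k = L_0/\rho_0$, and $\rho_k \geq \rho_0$, then yields $E_k \geq \rho_k C - L_g\norms{y^{k+1}}$, so it remains to bound $\norms{y^{k+1}} \leq \rho_k C / L_g$. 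This is exactly the role of the constant $C$ and the bound $D$, and I would establish it by the same boundedness argument used in the convex case (Theorem~\ref{thm:BigO_ergodic_rate}); after that, the error term is nonpositive and can be dropped.

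With the error term gone, I would multiply the inequality by the weight $w_k := \rho_k$ and sum over $j = 0, \dots, k-1$. The parameter rule \eqref{eq:scvx_ergodic_param_update} is engineered precisely so that both telescopes close: a short computation shows $\theta_{k+1}$ solves $(L_k + \mu_h)\theta_{k+1}^2 + \mu_f\theta_{k+1} - L_k = 0$, which is equivalent to $\rho_{k+1}(L_{k+1} - \mu_f) = \rho_k(L_k + \mu_h)$, so the reweighted primal quadratics telescope, while $\eta_k = \tfrac12\rho_k$ makes $\tfrac{w_k}{2\eta_k} = 1$ constant, so the dual quadratics telescope as well. Dropping the nonpositive end terms produces, for all $(x, s, y) \in \dom{F}\times\dom{H}\times\dom{H^{*}}$,
\[
\sum_{j=0}^{k-1}\rho_j\big[\Lc_{\rho_j}(x^{j+1}, s^{j+1}, y) - \Lc(x, s, y^{j+1})\big] \leq \tfrac{\rho_0(L_0 - \mu_f)}{2}\norms{x^0 - x}^2 + \tfrac12\norms{y^0 - y}^2.
\]

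Finally I would convert this master inequality into the three rates in \eqref{eq:ergodic_O2_result}. On the left I use $\Lc_{\rho_j} \geq \Lc \geq \widetilde{\Lc}$ from \eqref{eq:aug_Lag} and \eqref{eq:L_to_Ltilde}, convexity of $\widetilde{\Lc}(\cdot, y)$ in $x$, and the affineness of $\Lc(x, s, \cdot)$ in $y$, so that Jensen applied with the $\rho_j$-weighted averages $(\bar{x}^k, \bar{y}^k)$ of \eqref{eq:ergodic_O2_xbar_sbar} lower-bounds the sum by $\Sigma_k[\widetilde{\Lc}(\bar{x}^k, y) - \Lc(x, s, \bar{y}^k)]$. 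Specializing $(x, s, y)$ — to a saddle point for the gap, and to the points achieving $H(g(\bar{x}^k))$ and the relevant supremum for the dual, whose norms are controlled by $M_H$ and $M_{F^{*}}$ — then gives the numerators, exactly as in Theorem~\ref{thm:BigO_ergodic_rate}. The one genuinely new ingredient, and the step I expect to be the main obstacle, is the quadratic lower bound $\Sigma_k \geq \rho_0 k + \tfrac{P_0}{2}k(k-1)$ that upgrades the rate from $\BigO{1/k}$ to $\BigO{1/k^2}$. Since $\rho_k = (\rho_0/L_0)L_k$, this reduces to the growth of $L_k$ under the recursion $L_{k+1}^2 - \mu_f L_{k+1} = L_k^2 + \mu_h L_k$; one computes $\rho_{k+1} - \rho_k$ in closed form, identifies $P_0$ with the initial increment $\rho_1 - \rho_0$, and must then argue that the increments remain at least $P_0$ so that $\rho_j \geq \rho_0 + jP_0$. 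I would pay particular attention to the monotonicity of these increments, since that is where the precise constant $P_0$ is pinned down and is the most delicate part of the argument.
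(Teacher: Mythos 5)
Your proposal is correct and follows essentially the same route as the paper's proof (Lemmas~\ref{le:param_properties}--\ref{le:bound_of_Lk_scvx} and Appendix~\ref{subsec:alg2_ergodic_proof}): specialize Lemma~\ref{lm:descent_property1} with $\tau_k = 1$ and $\hat{x}^k = x^k$, prove $\Lb_g^k \le \rho_k C$ by the same induction as in the convex case adapted to the growing parameters, telescope the $\rho_j$-weighted inequality via $\rho_{k+1}(L_{k+1}-\mu_f) = \rho_k(L_k+\mu_h)$ and the constancy of $\rho_k/\eta_k$, and finish with Jensen on the weighted averages plus the growth bound $\Sigma_k \ge \rho_0 k + \tfrac{P_0}{2}k(k-1)$. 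The only slip is the constant on the dual term of your master inequality, which should be $\tfrac{\rho_0}{2\eta_0}\norms{y^0-y}^2 = \norms{y^0-y}^2$ rather than $\tfrac{1}{2}\norms{y^0-y}^2$; this does not affect the argument.
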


\begin{remark}\label{re:zero_mu}
If $\mu_F = 0$, then $\theta_{k+1} = 1$ in \eqref{eq:scvx_ergodic_param_update}, and consequently, $L_k = L_0$, $\rho_k = \rho_0$, and $\eta_k = \eta_0$ for all $k \geq 0$.
Moreover, $P_0 = 0$ in \eqref{eq:scvx_ergodic_param_init}.
Hence, the convergence rates in \eqref{eq:ergodic_O2_result} reduce to $\BigO{1/k}$ as in Theorem~\ref{thm:BigO_ergodic_rate}.
Therefore, to achieve $\BigO{1/k^2}$ accelerated rates, we require either $\mu_f > 0$ or $\mu_h > 0$, i.e., either $f$ or $h$ in \eqref{eq:primal_cvx} is strongly convex.
\end{remark}

With the update rules \eqref{eq:non_erg_T_def2} and \eqref{eq:param_update_ergodic_O1} (or, respectively, \eqref{eq:scvx_ergodic_param_init} and \eqref{eq:scvx_ergodic_param_update}), the main step \eqref{eq:APD_scheme1} of Algorithm~\ref{alg:A1} can be simplified as 
\myeq{eq:simplified_pd}{
\left\{\begin{array}{lcl}
	y^{k + 1} & := & \prox_{\rho_k H^*} \left(\tilde{y}^k + \rho_kg(x^k)\right),\vspace{1ex}\\
	x^{k + 1} & := & \prox_{h/L_k} \Big(x^k - \frac{1}{L_k}\big[\nabla f(x^k) + g'(x^k)^{\top}y^{k + 1} \big] \Big),\vspace{1ex}\\
	\tilde{y}^{k+1} & := &  \tilde{y}^k + \eta_k\big[ g(x^{k + 1}) - g(x^k) + \frac{1}{\rho_k} (y^{k + 1} - \tilde{y}^k)\big].
\end{array}\right.
}
Clearly, \eqref{eq:simplified_pd} requires one $\prox_{\rho_k H^{*}}(\cdot)$ operation, one $\prox_{h/L_k}(\cdot)$ operation, one function value $g(x^k)$, one gradient $\nabla f(x^k)$, and one Jacobian-vector product $g^{\prime}(x^k)^{\top}y^{k+1}$.
Note that \eqref{eq:simplified_pd} is still different from \cite[Algorithm 1]{hamedani2018primal} when solving \eqref{eq:saddle_point}.
If $H = \delta_{\R^m_{+}\times\sets{0}^n}$, the indicator of $\R^m_{+}\times\sets{0}^n$, then \eqref{eq:simplified_pd} reduces to the one that  is similar to \cite[Algorithm 1]{xu2017first} for solving a special case of \eqref{eq:cone_cvx_opt}, but our step $\tilde{y}^k$ is different from  \cite{xu2017first}.

\subsection{Semi-ergodic convergence rates}
In this subsection, we analyze the semi-ergodic convergence rates (i.e., the rates on the primal last iterate sequence and the dual averaging sequence) of Algorithm~\ref{alg:A1} for two cases: general convexity and strong convexity.

\subsubsection{The general convex case}
Assume that $F$ and $H^{*}$ are only convex, but not necessarily strongly convex, and $h$ and $H^{*}$ are not necessarily smooth.
Theorem~\ref{thm:BigO_nonegrodic_rate} (see Appendix \ref{subsec:alg1_thm_non_ergodic} for its proof) shows the $\BigO{1/k}$ semi-ergodic optimal rate of Algorithm \ref{alg:A1} on the primal last-iterate sequence $\sets{x^k}$ and the dual averaging sequence $\sets{\breve{y}^k}$.

\begin{theorem}\label{thm:BigO_nonegrodic_rate}
Suppose that Assumptions \ref{as:A1} and \ref{as:A2} hold for \eqref{eq:primal_cvx}.
Assume, in addition, that $H$ is $M_H$-Lipschitz continuous with a Lipschitz constant $M_H$ such that $L_gM_H <+ \infty$.
In particular, if $g$ is affine, then $L_g = 0$, and we allow $M_H = +\infty$ $($i.e., non-Lipschitz continuous$)$.
Let $\sets{(x^k, \breve{y}^k)}$ be generated by Algorithm~\ref{alg:A1} using the following update rules:  
\myeq{eq:update_rule}{
\left\{\begin{array}{l}
	\tau_k := \dfrac{1}{k + 1},\quad \rho_k := \dfrac{\rho_0}{\tau_k},\quad \eta_k := (1 - \gamma)\rho_k, \vspace{1ex}\\
	L_k :=  L_f + L_g M_H + \dfrac{M_g^2\rho_k}{\gamma}, \quad \text{and} \quad \beta_{k+1} := \dfrac{(1-\tau_k)\tau_{k+1}}{\tau_k},
\end{array}\right.
}
where $\rho_0 > 0$ is a given and $\gamma \in (0, 1)$ is fixed.
Then, the following bounds hold:
\myeq{eq:BigO_nonegrodic_rate}{
\left\{\arraycolsep=0.2em
\begin{array}{rlcl}
	& \Pc(x^k) - \Pc^{\star} & \leq & \dfrac{1}{2k} \Big[ L_0 \norms{x^0 - x^{\star} }^2 + \dfrac{1}{(1-\gamma)\rho_0} \big( \norms{y^0} + M_H \big)^2 \Big],\vspace{0.5ex}\\
	& \Dc(\breve{y}^k) - \Dc^{\star} & \leq & \dfrac{1}{2k} \Big[ L_0 \big(\norms{x^0} + M_{F^{*}} \big)^2 + \dfrac{1}{(1-\gamma)\rho_0} \norms{y^0 - y^{\star}}^2 \Big], \vspace{0.5ex}\\
	& \Gc_{\Xc \times \Yc} (x^k, \breve{y}^k) & \leq & \dfrac{\Rc^2_{\Xc\times\Yc}}{2k}, 
\end{array}\right.
}
where $\Gc_{\Xc \times \Yc}$ is defined by \eqref{eq:gap_func}, $\Rc^2_{\Xc\times\Yc} := \sup\set{ L_0\norms{x - x^0}^2 + \frac{1}{\eta_0}\norms{y - y^0}^2 : x \in \Xc, y \in\Yc}$, and $M_H,\ M_{F^*} \in [0, \ \infty]$ are the Lipschitz constants of $H$ and $F^*$, respectively.
\end{theorem}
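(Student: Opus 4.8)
My plan is to turn the one-iteration estimate \eqref{eq:p2_keylemma} of Lemma~\ref{lm:descent_property1} (with $\mu_f=\mu_h=0$) into a weighted telescoping sum. First I would insert the parameter rules \eqref{eq:update_rule} and check that the last two quadratic terms on the right of \eqref{eq:p2_keylemma} are nonpositive, so that they may be dropped. For the coefficient of $\norms{x^{k+1}-\hat{x}^k}^2$, the decisive observation is that the argument of $\Lb_g$ equals $y^{k+1}$: indeed Moreau's identity \eqref{eq:keylm_yk_in_domH} gives $\tilde{y}^k+\rho_k[g(\hat{x}^k)-s^{k+1}]=y^{k+1}$, and since $H$ is $M_H$-Lipschitz we have $y^{k+1}\in\dom{H^*}$ with $\norms{y^{k+1}}\le M_H$, whence $\Lb_g^k\le L_g\norms{y^{k+1}}\le L_gM_H$ by Assumption~\ref{as:A2}(b,ii). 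Because $\rho_k-\eta_k=\gamma\rho_k$ forces $\tfrac{\rho_k^2M_g^2}{\rho_k-\eta_k}=\tfrac{\rho_kM_g^2}{\gamma}$, the choice $L_k=L_f+L_gM_H+\tfrac{M_g^2\rho_k}{\gamma}$ makes this coefficient nonnegative. For the coefficient of $\norms{g(x^k)-s^k}^2$, the choice $\rho_k=\rho_0/\tau_k=\rho_0(k+1)$ gives $\rho_{k-1}-(1-\tau_k)\rho_k=0$ exactly.

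Next I would set up the telescoping. Put $z^{k+1}:=\tfrac{1}{\tau_k}[x^{k+1}-(1-\tau_k)x^k]$; the momentum weight $\beta_{k+1}=\tfrac{(1-\tau_k)\tau_{k+1}}{\tau_k}$ is chosen precisely so that $\tfrac{1}{\tau_k}[\hat{x}^k-(1-\tau_k)x^k]=z^k$ with $z^0=x^0$, so the two primal quadratics collapse to $\tfrac{\tau_k^2L_k}{2}\big(\norms{z^k-x}^2-\norms{z^{k+1}-x}^2\big)$. With the accumulated weight $\Gamma_k=1/k$ (which satisfies $\Gamma_{k+1}=(1-\tau_k)\Gamma_k$ for $k\ge1$, while the degenerate factor $1-\tau_0=0$ simply removes the $k=0$ carryover), I divide the recursion by $\Gamma_{k+1}$ and sum over $k$. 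The dual difference telescopes cleanly because $\tfrac{1}{\Gamma_{k+1}}\cdot\tfrac{1}{2\eta_k}=\tfrac{k+1}{2\eta_k}=\tfrac{1}{2\eta_0}$ is constant by the design of $\eta_k$. The primal differences, however, carry the non-constant weights $b_k:=\tfrac{\tau_k^2L_k}{\Gamma_{k+1}}=\tfrac{L_k}{k+1}=\tfrac{L_f+L_gM_H}{k+1}+\tfrac{M_g^2\rho_0}{\gamma}$; since $b_k$ is non-increasing in $k$, an Abel (summation-by-parts) rearrangement leaves only $\tfrac{b_0}{2}\norms{z^0-x}^2=\tfrac{L_0}{2}\norms{x^0-x}^2$ plus nonpositive terms. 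I expect this monotonicity-plus-Abel step to be the main obstacle, as it is what lets the growing step-sizes still produce a clean $\BigO{1/k}$ constant.

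Collecting the surviving terms yields, for every $(x,s,y)\in\dom{F}\times\dom{H}\times\dom{H^*}$,
\[
k\big[\Lc_{\rho_{k-1}}(x^k,s^k,y)-\Lc(x,s,\breve{y}^k)\big]\le \tfrac{L_0}{2}\norms{x^0-x}^2+\tfrac{1}{2\eta_0}\norms{y^0-y}^2.
\]
Because the right-hand side does not depend on $s$ and the bound holds for every $s\in\dom{H}$, I take the infimum over $s$ and use $\inf_s\Lc(x,s,\breve{y}^k)=\widetilde{\Lc}(x,\breve{y}^k)$ together with $\Lc_{\rho_{k-1}}(x^k,s^k,y)\ge\widetilde{\Lc}(x^k,y)$ from \eqref{eq:L_to_Ltilde} to obtain the master inequality
\[
k\big[\widetilde{\Lc}(x^k,y)-\widetilde{\Lc}(x,\breve{y}^k)\big]\le \tfrac{L_0}{2}\norms{x^0-x}^2+\tfrac{1}{2\eta_0}\norms{y^0-y}^2,\quad\forall (x,y)\in\dom{F}\times\dom{H^*}.
\]
This detour through the infimum over $s$ is what spares me any assumption that $\partial H^*(\breve{y}^k)$ be nonempty.

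Finally I would read off the three rates from the master inequality, recalling $\eta_0=(1-\gamma)\rho_0$. Taking the supremum over $(x,y)\in\Xc\times\Yc$ gives $\Gc_{\Xc\times\Yc}(x^k,\breve{y}^k)\le \Rc^2_{\Xc\times\Yc}/(2k)$ directly from \eqref{eq:gap_func}. For the primal residual I fix $x=x^\star$ and take the supremum over $y\in\dom{H^*}$: since $\sup_y\widetilde{\Lc}(x^k,y)=F(x^k)+H(g(x^k))=\Pc(x^k)$ while every such $y$ obeys $\norms{y}\le M_H$, and $\widetilde{\Lc}(x^\star,\breve{y}^k)\le\widetilde{\Lc}(x^\star,y^\star)=\Pc^\star$ by the saddle inequality \eqref{eq:saddle_point_opt}, I arrive at $\Pc(x^k)-\Pc^\star\le\tfrac{1}{2k}\big[L_0\norms{x^0-x^\star}^2+\tfrac{1}{(1-\gamma)\rho_0}(\norms{y^0}+M_H)^2\big]$. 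Symmetrically, for the dual residual I fix $y=y^\star$ and take the infimum over $x\in\dom{F}$: using $-\Dc(\breve{y}^k)=\inf_x\widetilde{\Lc}(x,\breve{y}^k)$, the saddle bound $\widetilde{\Lc}(x^k,y^\star)\ge\widetilde{\Lc}(x^\star,y^\star)=-\Dc^\star$, and $\norms{x}\le M_{F^*}$ for $x\in\dom{F}$ (the Lipschitz constant of $F^*$ bounds $\dom{F}$), I obtain the stated dual estimate. This completes the proof.
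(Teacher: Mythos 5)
Your proposal is correct and follows essentially the same route as the paper's proof: bound $\Lb_g^k=\Lb_g(y^{k+1})\le L_gM_H$ via Moreau's identity and the Lipschitz continuity of $H$, verify that the parameter rules \eqref{eq:update_rule} annihilate the residual terms in \eqref{eq:p2_keylemma} (your monotonicity of $b_k=\tau_k^2L_k/\Gamma_{k+1}$ is exactly the paper's condition $L_k\tau_k^2\le(1-\tau_k)L_{k-1}\tau_{k-1}^2$, just packaged as Abel summation instead of a Lyapunov recursion $\Vc_{k+1}\le(1-\tau_k)\Vc_k$), telescope to the master inequality, and extract the three bounds using $\dom{H^*}\subseteq B(0,M_H)$ and $\norms{\breve{x}^k}\le M_{F^*}$. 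Your use of $\inf_s\Lc(x,s,\breve{y}^k)=\widetilde{\Lc}(x,\breve{y}^k)$ in place of selecting $\breve{s}^k\in\partial H^*(\breve{y}^k)$ is a harmless (slightly cleaner) variant of the paper's step.
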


\begin{remark}\label{rmk:Bg}
Again, the right-hand side of the primal (respectively, the dual) convergence rate bound in Theorem~\ref{thm:BigO_nonegrodic_rate} is finite if $M_H$ (respectively, $M_{F^{*}})$ is finite.
\end{remark}

\subsubsection{The strongly convex case}
We again consider the case $F$ is strongly convex, i.e., $\mu_F := \mu_f + \mu_h > 0$.
The following theorem establishes the $\BigO{1/k^2}$ semi-ergodic optimal convergence rate of Algorithm~\ref{alg:A1}, whose proof can be found in Appendix \ref{subsec:alg2_non_ergodic_proof}.
			
\begin{theorem}\label{thm:BigO_nonegrodic_rate_str}
Suppose that Assumptions \ref{as:A1} and \ref{as:A2} hold for \eqref{eq:primal_cvx}, and $F$ is $\mu_F$-strongly convex with $\mu_F > 0$.
Suppose, in addition, that $H$ is $M_H$-Lipschitz continuous with a Lipschitz constant such that $L_gM_H <+ \infty$.
In particular, if $g$ is affine, then $L_g = 0$ and we allow $M_H = +\infty$. 
Let $\sets{(x^k, \breve{y}^k)}$ be generated by Algorithm \ref{alg:A1} using the following updates: 
\myeq{eq:update_rule_str}{
\hspace{-1ex}
\arraycolsep=0.1em
\left\{\begin{array}{l}
\tau_{k + 1} := \dfrac{\tau_k}2 \left(\sqrt{\tau_k^2 + 4} - \tau_k\right),  \quad \rho_k := \dfrac{\rho_0}{\tau_k^2}, \quad \eta_k := (1 - \gamma)\rho_k, \vspace{1ex}\\
L_k :=  L_f + L_gM_H + \frac{M_g^2\rho_k}{\gamma}, \ \ \text{and} \ \ 
\beta_{k+1} := \dfrac{(1-\tau_k)\tau_k(L_k + \mu_h)}{\tau_k^2(L_k+\mu_h) + (L_{k+1} + \mu_h)\tau_{k+1}}.
\end{array}\right.
\hspace{-2ex}
}
where $\tau_0 := 1$,  \ $0 < \rho_0 \leq \frac{\mu_F}{L_gM_H + M_g^2}$, and $\gamma \in (0, 1)$ is fixed.
Then, we have 
\myeq{eq:BigO_rate_str}{
		\left\{\arraycolsep=0.2em
		\begin{array}{rlcl}
			& \Pc(x^k) - \Pc^{\star} & \leq & \dfrac{2}{{(k + 1)}^2} \Big[ L_0\norms{x^0 - x^{\star}}^2  + \dfrac{1}{(1-\gamma)\rho_0} \big( \norms{y^0} + M_H \big)^2 \Big],\vspace{0.5ex}\\
			& \Dc(\breve{y}^k) -  \Dc^{\star} & \leq & \dfrac{2}{{(k + 1)}^2} \Big[ L_0 \big( \norms{x^0} + M_{F^{*}} \big)^2 + \dfrac{1}{(1-\gamma)\rho_0}\norms{y^0 - y^{\star}}^2 \Big], \vspace{0.5ex}\\
			& \Gc_{\Xc \times \Yc} (x^k, \breve{y}^k) & \leq & \dfrac{2\Rc^2_{\Xc\times\Yc}}{{(k + 1)}^2}, 
		\end{array}\right.
}
where  $\Gc_{\Xc \times \Yc}$ is defined by \eqref{eq:gap_func}, $\Rc^2_{\Xc\times\Yc}$ is defined in Theorem~\ref{thm:BigO_nonegrodic_rate}, and $M_H,\ M_{F^{*}} \in [0, +\infty]$ are the Lipschitz  constants of $H$ and $F^*$, respectively. 
\end{theorem}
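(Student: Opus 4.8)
The plan is to run a weighted telescoping argument on the one-iteration descent inequality \eqref{eq:p2_keylemma} of Lemma~\ref{lm:descent_property1}, turning it into a Lyapunov (potential) inequality whose decrease yields the $\BigO{1/k^2}$ rate. First I would verify that the parameter rules \eqref{eq:update_rule_str} are self-consistent and neutralize the two ``error'' terms on the right-hand side of \eqref{eq:p2_keylemma}. The choice $\tau_{k+1} = \frac{\tau_k}{2}(\sqrt{\tau_k^2+4}-\tau_k)$ is exactly the FISTA-type recursion satisfying $\tau_{k+1}^2 = (1-\tau_{k+1})\tau_k^2$; combined with $\rho_k = \rho_0/\tau_k^2$ this gives $\rho_{k-1} = (1-\tau_k)\rho_k$, so the term $-\frac{1-\tau_k}{2}[\rho_{k-1}-(1-\tau_k)\rho_k]\norms{g(x^k)-s^k}^2$ vanishes identically. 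Next, since $y^{k+1} = \tilde y^k + \rho_k[g(\hat x^k)-s^{k+1}] \in \dom{H^*}$ by \eqref{eq:keylm_yk_in_domH} and $H$ is $M_H$-Lipschitz, we have $\norms{y^{k+1}} \le M_H$, hence $\Lb_g^k \le L_g\norms{y^{k+1}} \le L_gM_H$ by Assumption~\ref{as:A2}(b)(ii). With $L_k = L_f + L_gM_H + M_g^2\rho_k/\gamma$ and $\eta_k = (1-\gamma)\rho_k$ one checks $\frac{\rho_k^2M_g^2}{\rho_k-\eta_k} = \frac{M_g^2\rho_k}{\gamma}$, so the coefficient $L_k - \Lb_g^k - L_f - \frac{\rho_k^2M_g^2}{\rho_k-\eta_k} \ge 0$ and the term $-\frac12(\ldots)\norms{x^{k+1}-\hat x^k}^2$ can be dropped.

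Second, I would weight \eqref{eq:p2_keylemma} by $\omega_k := 1/\tau_k^2$, which is forced by requiring the leading gap $\Lc_{\rho_k}(x^{k+1},s^{k+1},y) - \Lc(x,s,\breve y^{k+1})$ to telescope, since $\omega_k(1-\tau_k) = \omega_{k-1}$ follows from the $\tau$-recursion. Crucially, $\omega_k/\eta_k = 1/[(1-\gamma)\rho_0]$ is constant, so the dual distance terms $\frac{1}{2\eta_k}[\norms{\tilde y^k-y}^2 - \norms{\tilde y^{k+1}-y}^2]$ telescope with the fixed coefficient $\frac{1}{2(1-\gamma)\rho_0}$ appearing in \eqref{eq:BigO_rate_str}. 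At $k=0$ the weight kills both the $\Lc(x,s,\breve y^0)$ term and the $\norms{x^0-x}^2$ term because $\tau_0 = 1$, producing a clean initialization.

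Third --- and this is the heart of the argument --- I must control the two primal quadratic terms $\frac{\tau_k^2}{2}(L_k-\mu_f)\norms{\hat u^k-x}^2$ and $-\frac{\tau_k^2}{2}(L_k+\mu_h)\norms{u^{k+1}-x}^2$, where I set $\hat u^k := \frac{1}{\tau_k}[\hat x^k-(1-\tau_k)x^k]$ and $u^{k+1} := \frac{1}{\tau_k}[x^{k+1}-(1-\tau_k)x^k]$. Using the momentum step $\hat x^{k+1} = x^{k+1}+\beta_{k+1}(x^{k+1}-x^k)$ and $x^{k+1} = (1-\tau_k)x^k+\tau_k u^{k+1}$, I can write $\hat u^{k+1}$ as the affine combination $\hat u^{k+1} = (1-\lambda_k)x^k + \lambda_k u^{k+1}$ with $\lambda_k := \tau_k(1+\beta_{k+1}/\tau_{k+1})$, and expand $\norms{\hat u^{k+1}-x}^2$ by the three-point identity (the residual cross-term being favorable when $\lambda_k \in [0,1]$). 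After weighting, the goal is the one-step inequality $\frac12(L_{k+1}-\mu_f)\norms{\hat u^{k+1}-x}^2 \le \frac12(L_k+\mu_h)\norms{u^{k+1}-x}^2 + \frac{\mu_F(1-\tau_k)}{2\tau_k}\norms{x^k-x}^2$, which absorbs the next step's positive term into the current step's negative term plus the strong-convexity term $-\frac{\mu_F\tau_k(1-\tau_k)}{2}\norms{x^k-x}^2$ of \eqref{eq:p2_keylemma} (weighted by $\omega_k$). The $\beta_{k+1}$ in \eqref{eq:update_rule_str} is precisely reverse-engineered so that the coefficients of $\norms{u^{k+1}-x}^2$ and $\norms{x^k-x}^2$ balance simultaneously; verifying this identity, together with checking that the bound $\rho_0 \le \mu_F/(L_gM_H+M_g^2)$ supplies enough strong convexity to dominate the growth of $L_{k+1}$ (recall $L_{k+1}-L_k = \frac{M_g^2}{\gamma}(\rho_{k+1}-\rho_k)$ grows linearly in $k$), is the main obstacle and the most delicate computation.

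Finally, assembling these pieces gives a Lyapunov inequality $V_{k+1} \le V_k$, where $V_k$ collects $\omega_{k-1}[\Lc_{\rho_{k-1}}(x^k,s^k,y) - \Lc(x,s,\breve y^k)]$, a weighted $\norms{u^k-x}^2$, and $\frac{1}{2(1-\gamma)\rho_0}\norms{\tilde y^k-y}^2$. Telescoping from the $k=0$ initialization bounds $\omega_{K-1}[\Lc_{\rho_{K-1}}(x^K,s^K,y)-\Lc(x,s,\breve y^K)]$ by the initial data; since the FISTA sequence obeys $1/\tau_{K-1}^2 \ge (K+1)^2/4$, this yields the factor $\frac{2}{(K+1)^2}$. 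To convert this bound into the three stated criteria, I would specialize $(x,s,y)$ exactly as in the proof of Theorem~\ref{thm:BigO_nonegrodic_rate}: choosing $(x,s)=(x^\star,s^\star)$ and taking $y$ in $\dom{H^*}$ (using $\norms{y}\le M_H$) gives the primal residual $\Pc(x^K)-\Pc^\star$; the dual residual $\Dc(\breve y^K)-\Dc^\star$ follows symmetrically with $M_{F^*}$; and taking the supremum over $(x,y)\in\Xc\times\Yc$ via \eqref{eq:gap_func} gives the gap bound, using the surrogate relations \eqref{eq:saddle_pt_s} to pass between $\Lc$, $\Lc_\rho$, and $\widetilde{\Lc}$. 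These conversion steps are routine given the accelerated weights already in hand.
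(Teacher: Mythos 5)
Your proposal is correct and follows essentially the same route as the paper's proof: the same verification that the parameter rules annihilate the $\norms{g(x^k)-s^k}^2$ and $\norms{x^{k+1}-\hat{x}^k}^2$ error terms (via $\Lb_g^k\le L_gM_H$ and $\rho_k-\eta_k=\gamma\rho_k$), the same key step of transferring the primal quadratic terms across iterations using the reverse-engineered $\beta_{k+1}$ under the condition $\rho_0\le\mu_F/(L_gM_H+M_g^2)$ (this is exactly the paper's Lemma~\ref{le:element_est12}), and the same telescoping and specialization of $(x,s,y)$ to obtain the three criteria. Your weighted telescoping with $\omega_k=1/\tau_k^2$ is just the paper's multiplicative potential recursion $\Vc_{k+1}\le(1-\tau_k)\Vc_k$ written in a different normalization, so the two arguments coincide.
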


\begin{remark}\label{re:optimal_rates}
The $\BigO{1/k}$ and $\BigO{1/k^2}$ convergence rates stated in this paper are optimal (up to a constant factor) under the corresponding assumptions in the above four theorems since these rates are optimal for the special linear case as shown in \cite{tran2019non}.
\end{remark}

Note that the update of $L_k$ in \eqref{eq:update_rule}  of Theorem~\ref{thm:BigO_nonegrodic_rate} and \eqref{eq:update_rule_str} of Theorem~\ref{thm:BigO_nonegrodic_rate_str} does not require an upper bound constant $D$ as in Theorems~\ref{thm:BigO_ergodic_rate} and \ref{thm:alg2_ergodic}.
Moreover, the momentum step-size $\beta_{k+1}$ in \eqref{eq:update_rule_str} is new as we have not seen it in the literature.

\section{Application to Convex Cone Constrained Optimization}\label{sec:extension_general_phi}
In this section, we specify Algorithm~\ref{alg:A1} and their convergence results to handle the special case \eqref{eq:cone_cvx_opt} of \eqref{eq:saddle_point}.
This problem is a general convex cone constrained program as in \cite{hamedani2018primal}, and is more general than the setting studied in other existing works, e.g., \cite{xu2017first}.
By Assumption \ref{as:A2}(b), since $\iprods{y,\ g(x)}$ is convex in $x$ for any $y \in \Kc^{*}$, $g$ is $\Kc$-convex, i.e., for all $x, \hat{x}\in\dom{g}$ and $\lambda \in [0,1]$, it holds that $(1-\lambda)g(x) + \lambda g(\hat{x}) - g\left((1 - \lambda)x + \lambda\hat{x}\right)\in\Kc$. 
To guarantee strong duality, we require the Slater condition on \eqref{eq:cone_cvx_opt}: $\set{x \in \ri{\dom{F}} : g(x) \in -\mathrm{int}(\Kc)}\neq\emptyset$.

To solve \eqref{eq:cone_cvx_opt}, we apply Algorithm~\ref{alg:A1} and replace the update of $y^{k+1}$ at Step \ref{step:A1:s1} by
\myeq{eq:projec_yplus}{
	 y^{k + 1} := \proj_{\Kc^{*}} \left(\tilde{y}^k + \rho_k g(\hat{x}^k)\right), 
}
where $\proj_{\Kc^{*}}$ is the projection onto the dual cone $\Kc^{*}$.
We will characterize the convergence of Algorithm~\ref{alg:A1} via the following combined primal-dual measurement:
\myeq{eq:Ec_gap}{
	\Ec(x) :=  \max\big\{ \vert F(x) - F^{\star}\vert,\ \kdist{-\Kc}{g(x)} \big\},
}
where $\kdist{-\Kc}{g(x)} := \inf_{s \in -\Kc} \norm{g(x) - s}$ is the Euclidean distance from $g(x)$ to $-\Kc$. 

The following theorem proves the convergence of the proposed variant of Algorithm~\ref{alg:A1} for solving \eqref{eq:cone_cvx_opt}, whose proof can be found in Appendix \ref{app:conic}.

\begin{theorem}\label{thm:conic}
Suppose that Assumptions~\ref{as:A1} and \ref{as:A2}, and the Slater condition hold for \eqref{eq:cone_cvx_opt}.
Let $\sets{x^k}$ be generated by the variant of Algorithm~\ref{alg:A1} using \eqref{eq:projec_yplus} for solving \eqref{eq:cone_cvx_opt}.
Let $\Ec(x)$ be defined by \eqref{eq:Ec_gap} and $\Delta_0 := L_0\norms{x^0 - x^{\star}}^2 + \frac{1}{\eta_0} \left(\norms{y^0} + \norms{y^{\star}} + 1\right)^2$.
Then, we have:
\begin{enumerate}
\item[$\mathrm{(a)}$] Under the conditions of Theorem \ref{thm:BigO_ergodic_rate}, we have  $\Ec(\bar{x}^{k}) \leq \tfrac{\Delta_0}{2k}$ in ergodic.
\item[$\mathrm{(b)}$] Under the conditions of Theorem \ref{thm:alg2_ergodic}, we have $\Ec(\bar{x}^{k}) \leq \tfrac{\Delta_0}{2[\rho_0k + P_0k(k-1)]}$ in ergodic.
\end{enumerate}
Alternatively, if $g$ is either affine or  bounded by $B_g$ on $\dom{F}$ $($i.e., $\norms{g(x)} \leq B_g$ for all $x\in\dom{F}$, or in particular, $\dom{F}$ is bounded$)$, and Algorithm \ref{alg:A1} uses $L_k := L_f + \frac{1}{\gamma}L_g\left(L_g[\norms{y^0}/\rho_0 + (2-\gamma)B_g] + M_g^2 \right)$ at Step~\ref{step:A1:main_step1}, then the following guarantees hold:
\begin{enumerate}
\item[$\mathrm{(c)}$] Under the conditions of Theorem \ref{thm:BigO_nonegrodic_rate}, we have $\Ec(x^{k}) \leq \tfrac{\Delta_0}{2k}$ in the last iterate $x^k$.
\item[$\mathrm{(d)}$] Under the conditions of Theorem \ref{thm:BigO_nonegrodic_rate_str}, we have $\Ec(x^{k}) \leq \tfrac{2\Delta_0}{{(k + 1)}^2}$ in the last iterate $x^k$.
\end{enumerate}
\end{theorem}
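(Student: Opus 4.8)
The plan is to specialize the four generic guarantees to \eqref{eq:cone_cvx_opt}, where $H = \delta_{-\Kc}$ and hence $H^{*} = \delta_{\Kc^{*}}$, so that $\prox_{\rho_k H^{*}} = \proj_{\Kc^{*}}$ and the update of $y^{k+1}$ indeed reduces to \eqref{eq:projec_yplus}. Since every $y^{k+1}$ is a projection onto the convex cone $\Kc^{*}$, and both $\breve{y}^k$ and $\bar{y}^k$ are convex combinations of such points, all dual iterates stay in $\Kc^{*}$. The structural fact I would invoke is Moreau's decomposition for the mutually polar cones $-\Kc$ and $\Kc^{*}$, namely $g(\bar{x}^k) = \proj_{-\Kc}\big(g(\bar{x}^k)\big) + \proj_{\Kc^{*}}\big(g(\bar{x}^k)\big)$ with orthogonal summands, which gives $\kdist{-\Kc}{g(\bar{x}^k)} = \norms{\proj_{\Kc^{*}}(g(\bar{x}^k))} = \iprods{g(\bar{x}^k),\, v}$ for the unit vector $v := \proj_{\Kc^{*}}(g(\bar{x}^k))/\norms{\proj_{\Kc^{*}}(g(\bar{x}^k))} \in \Kc^{*}$ (when the projection is nonzero; otherwise the feasibility gap is already zero).

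The engine is the per-$y$ Lagrangian bound underlying the gap estimates of the four theorems: tracing their proofs \emph{before} the supremum over $\Yc$ is taken, one obtains, for every $y \in \dom{H^{*}} = \Kc^{*}$, a bound of the form $\widetilde{\Lc}(\bar{x}^k, y) - \widetilde{\Lc}(x^{\star}, \bar{y}^k) \le \frac{1}{2S_k}\big[L_0\norms{x^{\star} - x^0}^2 + \frac{1}{\eta_0}\norms{y - y^0}^2\big]$, where $S_k = k$ in case (a) and $S_k$ equals the corresponding strictly increasing denominators from \eqref{eq:ergodic_O2_result}, \eqref{eq:BigO_nonegrodic_rate}, \eqref{eq:BigO_rate_str} in cases (b), (c), (d) (with $x^k, \breve{y}^k$ replacing $\bar{x}^k, \bar{y}^k$ in the non-ergodic cases). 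Because $g(x^{\star}) \in -\Kc$ and $\bar{y}^k \in \Kc^{*}$, we have $\iprods{g(x^{\star}), \bar{y}^k} \le 0$, hence $\widetilde{\Lc}(x^{\star}, \bar{y}^k) \le F^{\star}$; so the bound becomes $F(\bar{x}^k) - F^{\star} + \iprods{g(\bar{x}^k), y} \le \frac{1}{2S_k}\big[L_0\norms{x^{\star}-x^0}^2 + \frac{1}{\eta_0}\norms{y - y^0}^2\big]$ for all $y \in \Kc^{*}$.

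I would then specialize this inequality at two test points. Taking $y = 0 \in \Kc^{*}$ kills the coupling term and, using $\norms{y^0}^2 \le (\norms{y^0}+\norms{y^{\star}}+1)^2$, yields the objective upper bound $F(\bar{x}^k) - F^{\star} \le \tfrac{\Delta_0}{2S_k}$. Taking $y = y^{\star} + v \in \Kc^{*}$ turns the coupling term into $\iprods{g(\bar{x}^k), y^{\star}} + \kdist{-\Kc}{g(\bar{x}^k)}$ and, since $\norms{y^{\star}+v-y^0} \le \norms{y^0}+\norms{y^{\star}}+1$, gives $F(\bar{x}^k) - F^{\star} + \iprods{g(\bar{x}^k), y^{\star}} + \kdist{-\Kc}{g(\bar{x}^k)} \le \tfrac{\Delta_0}{2S_k}$. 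Independently, the saddle-point inequality behind \eqref{eq:opt_cond}, together with complementary slackness $\iprods{g(x^{\star}), y^{\star}} = 0$ (which holds because $g(x^{\star}) \in N_{\Kc^{*}}(y^{\star}) \subseteq -\Kc$), delivers the matching lower bound $F(\bar{x}^k) - F^{\star} \ge -\iprods{g(\bar{x}^k), y^{\star}} \ge -\norms{y^{\star}}\,\kdist{-\Kc}{g(\bar{x}^k)}$, and in particular $F(\bar{x}^k) - F^{\star} + \iprods{g(\bar{x}^k), y^{\star}} \ge 0$. Substituting this nonnegativity into the shifted-test-point inequality isolates the feasibility estimate $\kdist{-\Kc}{g(\bar{x}^k)} \le \tfrac{\Delta_0}{2S_k}$, and combining it with the two-sided control of $F(\bar{x}^k) - F^{\star}$ produces $\Ec(\bar{x}^k) \le \tfrac{\Delta_0}{2S_k}$, which is (a) for $S_k = k$ and (b) for the accelerated denominator.

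Parts (c) and (d) run through the identical template with the primal last iterate $x^k$ and the dual average $\breve{y}^k$, using the denominators of Theorems~\ref{thm:BigO_nonegrodic_rate} and \ref{thm:BigO_nonegrodic_rate_str}. The one genuinely new ingredient there is justifying the constant step-size parameter $L_k := L_f + \tfrac{1}{\gamma}L_g\big(L_g[\norms{y^0}/\rho_0 + (2-\gamma)B_g] + M_g^2\big)$ in place of the generic $L_k$ of \eqref{eq:update_rule}--\eqref{eq:update_rule_str}: by the Moreau identity \eqref{eq:keylm_yk_in_domH} the argument of the Lipschitz modulus is $\tilde{y}^k + \rho_k[g(\hat{x}^k) - s^{k+1}] = y^{k+1} \in \Kc^{*}$, so Assumption~\ref{as:A2}(b,ii) gives $\Lb_g^k = \Lb_g(y^{k+1}) \le L_g\norms{y^{k+1}}$, and I would bound $\norms{y^{k+1}} = \norms{\proj_{\Kc^{*}}(\tilde{y}^k + \rho_k g(\hat{x}^k))}$ uniformly when $g$ is affine ($L_g = 0$, so the term vanishes) or bounded by $B_g$. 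I expect the main obstacle to be exactly this last step---controlling $\Lb_g^k$ (equivalently, the dual iterates) uniformly in $k$ so a constant $L_k$ is admissible---compounded by the fact that $\Pc(\bar{x}^k) = +\infty$ at infeasible iterates, which forces the entire argument through the dual-test-point/Lagrangian route rather than the primal objective residual directly.
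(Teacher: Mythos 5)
Your overall architecture for parts (a) and (b) is essentially the paper's own argument: both rest on the per-$y$ surrogate-Lagrangian bound $F(\bar{x}^k) - F^{\star} + \iprods{y,\, g(\bar{x}^k) - \bar{s}^k} \leq \tfrac{1}{2S_k}\big[L_0\norms{x^0-x^{\star}}^2 + \tfrac{1}{\eta_0}\norms{y-y^0}^2\big]$, combined with the saddle-point lower bound $F(\bar{x}^k) - F^{\star} \geq -\iprods{y^{\star}, g(\bar{x}^k)-\bar{s}^k}$. The only cosmetic difference is how the feasibility residual is extracted: the paper takes the supremum of the left-hand side over the ball $\norms{y}\leq \norms{y^{\star}}+1$ and then uses $\kdist{-\Kc}{g(\bar{x}^k)} \leq \norms{g(\bar{x}^k)-\bar{s}^k}$ (since $\bar{s}^k\in-\Kc$), whereas you test at the single point $y^{\star}+v$ with $v$ the unit direction of $\proj_{\Kc^{*}}(g(\bar{x}^k))$ via Moreau's cone decomposition. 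Both routes yield the same constant $(\norms{y^0}+\norms{y^{\star}}+1)^2$, and your variant has the mild advantage of keeping the test point inside $\dom{H^{*}}=\Kc^{*}$. (Your lower bound on $F-F^{\star}$ carries the factor $\norms{y^{\star}}$, but so does the paper's; this looseness is inherited from the statement, not introduced by you.)

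The genuine gap is in parts (c) and (d), and it is exactly the step you flag as ``the main obstacle'' without resolving: you must prove that $\Lb_g^k = \Lb_g(y^{k+1}) \leq L_g\norms{\proj_{\Kc^{*}}(\tilde{y}^k + \rho_k g(\hat{x}^k))}$ stays controlled \emph{relative to $\rho_k$} for all $k$, since $\rho_k\to\infty$ in the semi-ergodic regimes and a bound on $\norms{y^{k+1}}$ alone is useless unless $\norms{\tilde{y}^k}/\rho_k$ is uniformly bounded. Boundedness of $g$ by $B_g$ does not by itself give this, because $\tilde{y}^k$ is not a projection onto $\Kc^{*}$ and could a priori drift. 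The paper closes this by telescoping the $\tilde{y}$-update: since $\eta_{k-1}=(1-\tau_k)\eta_k$, one gets $\tilde{y}^{k+1} = y^0 + \eta_k\Theta_{k+1}$ exactly, and combining this with $\norms{\Theta_{k+1}} = \norms{g(x^{k+1})-s^{k+1}} \leq 2B_g + \norms{\tilde{y}^k}/\rho_k$ (which uses $\norms{s^{k+1}} = \norms{\proj_{-\Kc}(\tilde{y}^k/\rho_k + g(\hat{x}^k))} \leq \norms{\tilde{y}^k}/\rho_k + B_g$, valid because $0\in-\Kc$) and $\eta_k = (1-\gamma)\rho_k$ yields, by induction, the fixed-point bound $\norms{\tilde{y}^k}/\rho_k \leq \tfrac{1}{\gamma}\big[\norms{y^0}/\rho_0 + 2(1-\gamma)B_g\big]$. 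Only then does $\Lb_g^k \leq L_g\rho_k\big(B_g + \tfrac{1}{\gamma}[\norms{y^0}/\rho_0 + 2(1-\gamma)B_g]\big)$ follow, which is what makes the modified $L_k$ admissible in Lemma~\ref{lm:descent_property1}. Without this induction your proof of (c) and (d) is incomplete; with it, the remainder of your template goes through as in (a) and (b).
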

In the last two cases (c) and (d) of Theorem~\ref{thm:conic}, we do not have the Lipschitz continuity of $H$ since $H(\cdot) = \delta_{-\Kc}(\cdot)$.
Hence, we need to derive a new upper bound for $\Lb_g^k$ of Lemma~\ref{lm:descent_property1} to update $L_k$.
The convergence bounds of Theorem~\ref{thm:conic} already combine both the primal objective residual $\vert F(x) - F^{\star}\vert$ and the primal feasibility violation $\kdist{-\Kc}{g(x)}$.
Moreover, their convergence rates are optimal.
The statements (a) and (b) cover \cite{xu2017first} as  special cases.

\section{Numerical Experiments}\label{sec:num_exp}
In this section, we test and compare different variants of Algorithm~\ref{alg:A1} on two numerical examples. 
The first one is a binary classification with multiple distributions in Subsection \ref{subsec:robust_learn}.
The second example is a convex-linear minimax game in Subsection \ref{subsec:game}.
Our experiments are implemented in Matlab R2018b, running on a Laptop with 2.8 GHz Quad-Core Intel Core i7 and 16Gb RAM using Microsoft Windows.

%

\subsection{Binary classification with multiple distributions}\label{subsec:robust_learn}
Let $\Pbb_1, \cdots, \Pbb_n$ denote $n$ distributions of $n$ datasets, respectively.
We consider the following optimization problem, which we call a binary classification with multiple distributions:
\myeq{eq:robust_learn}{
\min_{x \in \R^p}\Big\{ \max_{1\leq i \leq n} \expect{a \sim \Pbb_i}{G(x, a)} + \rho R(x) \Big\}, 
}
where $R$ is a given convex regularizer, $\rho > 0$, $G(x, a)$ is a convex loss function depending on a parameter $x$ and an input data $a$. 
Let $g_i(x) := \expect{a \sim \Pbb_i}{G(x, a)}$ denote the expected loss over the distribution $\Pbb_i$, and $H(u) := \max_{1\leq i \leq n}u_i$.
Then, \eqref{eq:robust_learn} can be cast into \eqref{eq:primal_cvx}.

In this experiment, we choose $R(x) := \frac{1}{2}\norms{x}^2$, $G(x, a) := \log(1 + \exp(1 + a^{\top}x))$, which is a common logistic loss function widely used in binary classification. 
Here, $a$ is the product of the feature vector $w \in \R^p$ and the label $z$ for each example.
After some experiment, we find that $\rho := 0.01$ is a reasonable regularization parameter value.

Since $f$ is strongly convex, we implement four variants of Algorithm \ref{alg:A1}: \texttt{Alg.1 (v1)}, \texttt{(v2)}, \texttt{(v3)}, and \texttt{(v4)}, and compare them with the Accelerated Primal-Dual (\texttt{APD}) algorithm (the strongly convex variant)  in \cite{hamedani2018primal}, and the \texttt{Mirror Descent} method in \cite{Nemirovskii2004}.
Note that  \texttt{Mirror Descent} is double-loop where the inner loop approximately computes the prox-mapping. 
Since we have tuned the parameters,  \texttt{Mirror Descent} implicitly exploits the strong convexity of $f$ in \eqref{eq:robust_learn}.
We use different datasets from \texttt{LIBSVM} \cite{CC01a} to create five tests. 
For the first test, we treat $9$ datasets: \texttt{a1a}, $\cdots$, \texttt{a9a} as observations generated from $9$ different distributions, respectively. 
Similarly, in the second test, we use $8$ datasets: \texttt{w1a}, $\cdots$, \texttt{w8a} as observations generated from $8$ different distributions, respectively. 
For the last three tests, we use three datasets: \texttt{covtype}, \texttt{rcv1}, and \texttt{news20} and split their total observations into $10$ blocks and assume that each block is generated from a different distribution.

\begin{table}[hpt!]
\newcommand{\cell}[1]{{\!\!}#1{\!\!}}
\newcommand{\cellb}[1]{{{\!\!}\color{blue}#1{\!\!}}}
\caption{The numerical results of six algorithms on the five tests of problem \eqref{eq:robust_learn} after $1000$ iterations.}\label{table:robust_learn}

\resizebox{\textwidth}{!}{
\begin{tabular}{| l | rr | rr | rr | rr | rr | rr | rr |}
\hline
\multicolumn{1}{|c}{{\!\!}Test Prob.{\!\!}} & \multicolumn{2}{|c}{Size }  &  \multicolumn{2}{|c}{\texttt{Alg.1 (v1)}}  &     \multicolumn{2}{|c}{\texttt{Alg.1 (v2)}}  &  \multicolumn{2}{|c|}{\texttt{Alg.1 (v3)}} &  \multicolumn{2}{|c|}{\texttt{Alg.1 (v4)}} &  \multicolumn{2}{|c|}{\texttt{APD}} &  \multicolumn{2}{|c|}{\!\!\texttt{Mirror Descent}{\!\!}}        \\ \hline
  &\cell{$n$}   & \cell{$p$}       & \cell{gap~} & \cell{time[s]} & \cell{gap~} & \cell{time[s]} & \cell{gap~} & \cell{time[s]} & \cell{gap~} & \cell{time[s]} & \cell{gap~} & \cell{time[s]} & \cell{gap~} & \cell{time[s]}    \\ \hline
\texttt{a1a-a9a}  & \cell{9}   & \cell{123} & \cell{5.3e-3}   & \cell{43.00} & \cell{3.4e-4}   & \cell{34.34} & \cell{3.9e-3}   & \cell{42.56} & \cellb{1.8e-4}   & \cellb{29.45} & \cell{4.2e-4}   & \cell{30.48} & \cell{2.9e-4}   & \cell{57.86}    \\ \hline
\texttt{w1a-w8a}  & \cell{8}   & \cell{300} & \cell{1.5e-3}   & \cell{61.56} & \cell{3.7e-4}   & \cell{56.09} & \cell{1.1e-3}   & \cell{62.98} & \cellb{1.0e-4}   & \cell{44.68} & \cell{1.8e-4}   & \cellb{43.67} & \cell{5.1e-4}   & \cell{107.36}    \\ \hline
\texttt{covtype}  & \cell{10}   & \cell{54} & \cell{1.3e-2}   & \cell{110.04} & \cell{3.3e-5}   & \cell{90.20} & \cell{2.1e-3}   & \cell{106.44} & \cell{6.4e-5}   & \cellb{63.93} & \cellb{3.0e-5}   & \cell{86.83} & \cell{1.4e-4}   & \cell{141.11}    \\ \hline
\texttt{rcv1}  & \cell{10}   & \cell{47,236} & \cell{2.1e-3}   & \cell{25.40} & \cell{1.2e-4}   & \cell{23.87} & \cell{6.7e-4}   & \cell{24.12} & \cellb{1.3e-5}   & \cellb{23.24} & \cell{1.5e-5}   & \cell{23.39} & \cell{3.2e-4}   & \cell{48.17}    \\ \hline
\texttt{news20}  & \cell{10}   & \cell{1,355,191} & \cell{2.4e-3}   & \cell{242.05} & \cell{2.1e-5}   & \cell{261.58} & \cell{1.4e-4}   & \cell{241.99} & \cellb{6.2e-6}   & \cellb{232.06} & \cell{3.2e-4}   & \cell{232.59} & \cell{1.3e-4}   & \cell{485.05}    \\ \hline
\end{tabular}}
\vspace{-1ex}
\end{table}

To obtain a fair comparison, we tune the hyper-parameters of the underlying algorithms.
For \texttt{APD} and \texttt{Mirror Descent}, we tune their primal-dual step-sizes in the range of $[0.001, 0.01, 0.1, 1, 10]$. 
For our algorithms, we set $\gamma := \tfrac{1}{2}$ and tune $\rho_0$ in the range of $[0.001, 0.01, 0.1, 1, 10]$. 
We use both the primal-dual (or duality) gap $\Pc(x^k) - \Dc(y^k)$ and the CPU time (in seconds)  to measure algorithm's performance.
Our numerical results on the $5$ tests are summarized in Table \ref{table:robust_learn} after $1000$ iterations. 
Note that, from theoretical convergence guarantees, the gap is computed based on averaging sequences, i.e., $\Pc(\bar{x}^k) - \Dc(\bar{y}^k)$ for \texttt{Alg.1 (v1)}, \texttt{(v2)}, \texttt{APD}, and \texttt{Mirror Descent}.
For  \texttt{Alg.1 (v3)} and \texttt{(v4)}, it is computed based on the primal last iterate and the dual averaging sequence, i.e., $\Pc(x^k) - \Dc(\bar{y}^k)$.

From Table \ref{table:robust_learn}, we observe that, overall, \texttt{Alg.1 (v4)} outperforms other algorithms in terms of the primal-dual gap and CPU time in most cases. 
Since \texttt{Alg.1 (v1)} and \texttt{Alg.1 (v3)} do not utilize the strong convexity of $f$ in \eqref{eq:robust_learn}, their performance is  worse than that of other four methods, where the strong convexity is exploited. 
Finally, to illustrate the progress of $6$ algorithms, Figure \ref{fig:robust_learn} shows their primal-dual gap vs. iterations on two tests: \texttt{a1a-a9a} (left plot) and \texttt{news20} (right plot).
\begin{figure*}[hpt!]
\vspace{-0.5ex}
\begin{center}
\includegraphics[width=0.45\textwidth]{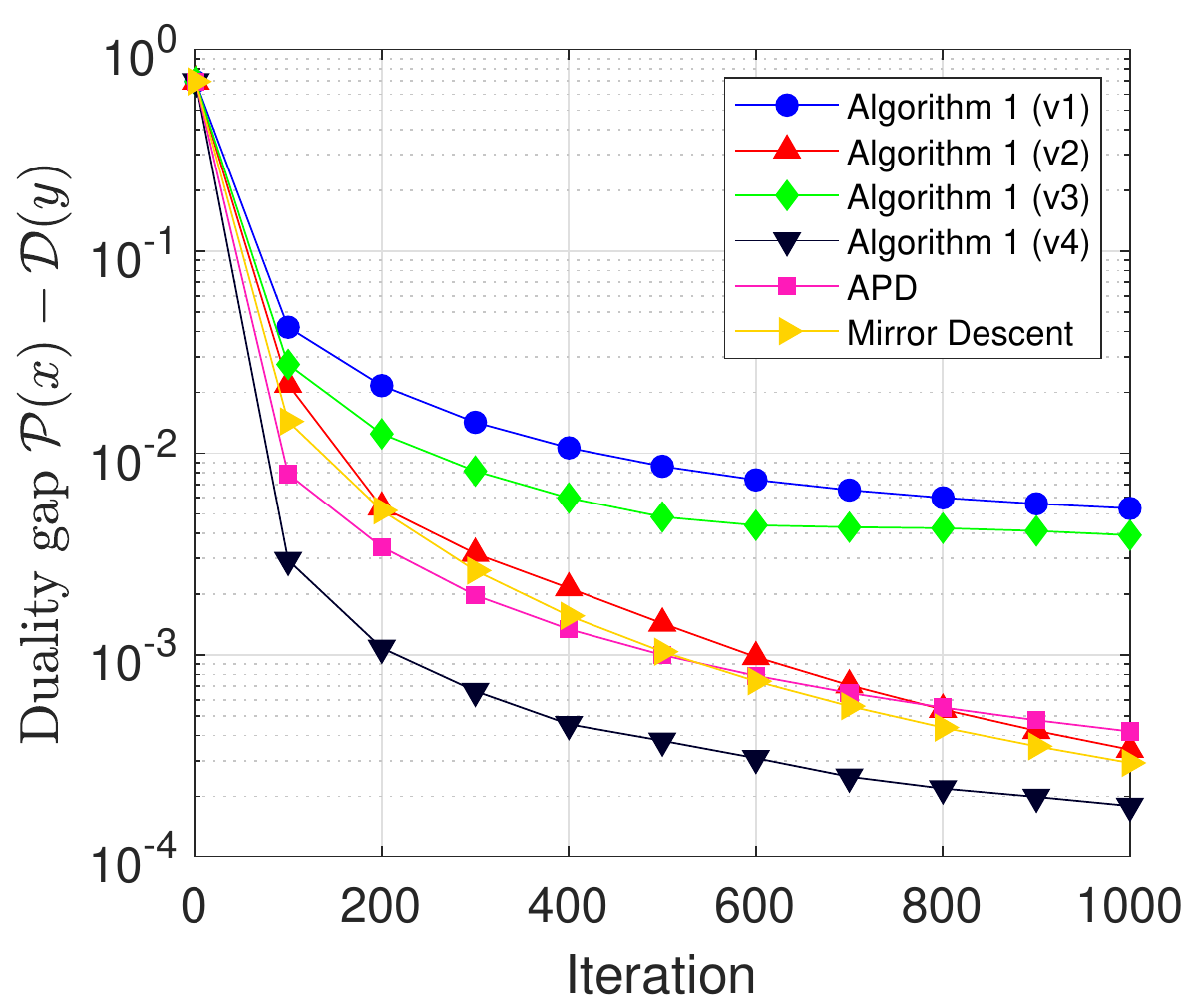}
\includegraphics[width=0.45\textwidth]{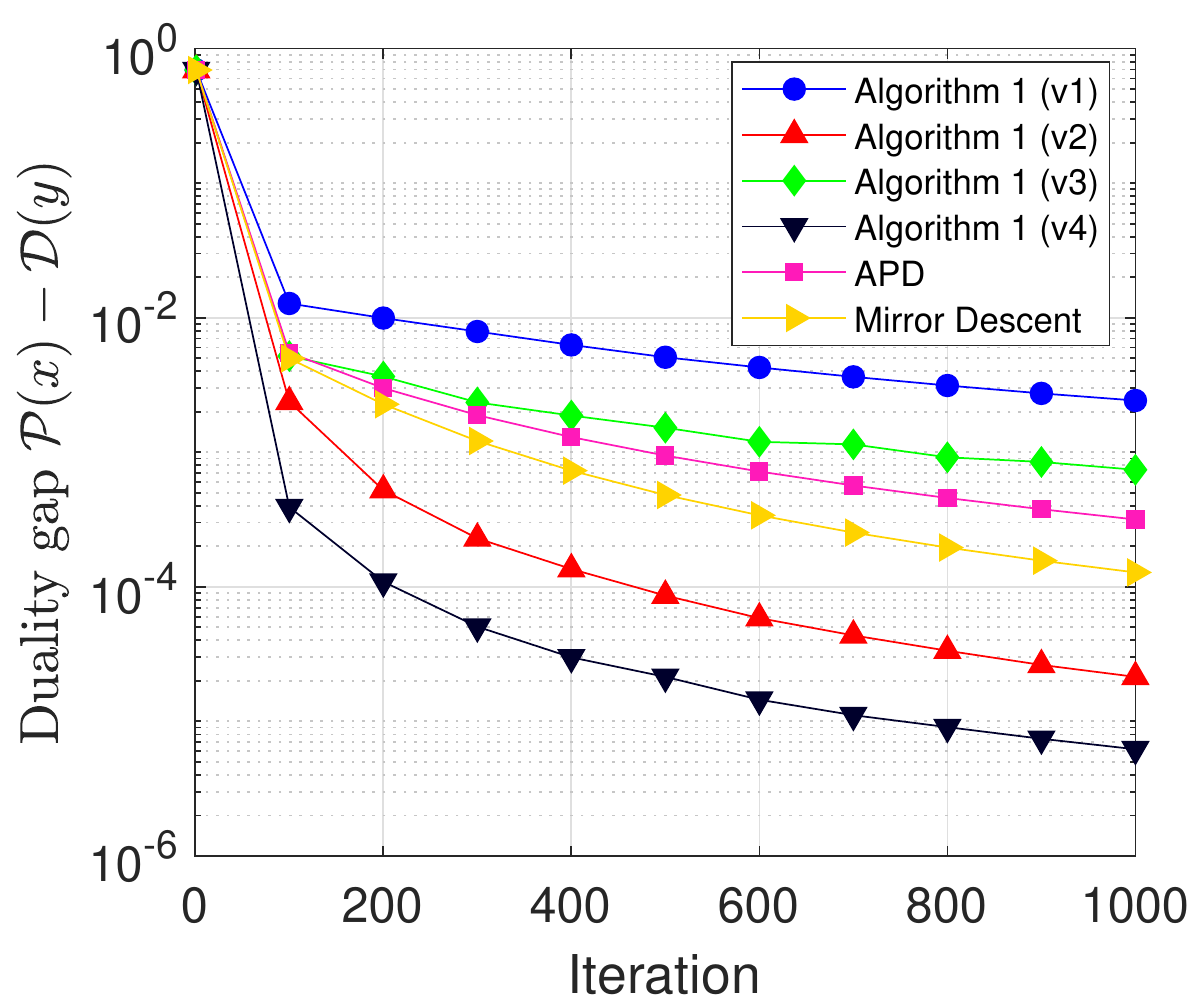}
\vspace{-1.5ex}
\caption{The convergence behavior of six algorithms for solving \eqref{eq:robust_learn} on \texttt{a1a-a9a} $($left$)$ and \texttt{news20} $($right$)$.}
\label{fig:robust_learn}
\vspace{-2ex}
\end{center}  
\end{figure*}

\subsection{Convex-concave minimax game}\label{subsec:game}
We consider a convex-concave minimax game between two players, 
where Player 1 chooses her strategy $x \in \Delta_p := \sets{x \in \R_{+}^p \, : \, \sum_{j = 1}^p x_j = 1}$ to minimize a cost function $F(x)$, and simultaneously, 
Player 2 chooses her strategy $y \in \Delta_n := \sets{y \in \R_{+}^n \, :\,  \sum_{i = 1}^n y_i = 1}$ to minimize a cost function $H^{*}$.
In addition, Player 1 has to pay $\Phi(x, y)$ loss to Player 2.
By concrete choices of $\Phi$ as in \cite[Section 4.3]{chen2017accelerated}, we can model this problem into the following minimax problem with convex-linear coupling term:
\myeq{eq:exp_game}{
	\min_{x \in \Delta_p} \max_{y \in \Delta_n} \bigg\{ \widetilde{\Lc}(x, y) := \frac{1}{N}\sum_{j = 1}^N \text{log} (1 + \exp( a_j^{\top} x)) + \sum_{i = 1}^n \frac{b_i y_i}{1 + x_i} \bigg\}.
}
Clearly, \eqref{eq:exp_game} can be cast into our model \eqref{eq:saddle_point} with $f(x) := \frac{1}{N}\sum_{j = 1}^N \text{log} (1 + e^{a_j^{\top} x})$ and $g_i(x) := \frac{b_i}{1+x_i}$.
We can compute $L_f := \frac{1}{4}\norms{A}^2$, and $L_{g_i} := 2\vert b_i\vert$, $M_{g_i} := \vert b_i\vert$ for $i \in [n]$.

Since $f$ in \eqref{eq:exp_game} is not strongly convex, we solve \eqref{eq:exp_game} using two variants of Algorithm \ref{alg:A1}: \texttt{Alg.1 (v1)} and \texttt{(v3)} both with $\Oc(1/k)$ convergence rates on the primal-dual gap.
We again compare \texttt{Alg.1 (v1)}, \texttt{(v3)} with the Accelerated Primal-Dual (\texttt{APD}) algorithm \cite{hamedani2018primal}, and the \texttt{Mirror Descent} method \cite{Nemirovskii2004} (without strong convexity).
The hyper-parameters of these algorithms are tuned in the same way as in Section \ref{subsec:robust_learn}.

For input data, we take the \texttt{real-sim} dataset from \texttt{LIBSVM}  \cite{CC01a} to form vector $a_i$ and generate $b_i$ by using a standard uniform distribution. 
To fully test the performance of four algorithms, we generate $30$ problem instances by randomly splitting the \texttt{real-sim} dataset  into $30$ equal blocks ($N=2411$ samples per block), respectively.
The performance of the four algorithms on $30$ problem instances is depicted in Figure~\ref{fig:game_real-sim}. 
Here, for \texttt{Alg.1 (v1)}, \texttt{APD}, and \texttt{Mirror Descent}, the duality gap is computed based on both the primal and dual averaging sequences, i.e., $\Pc(\bar{x}^k) - \Dc(\bar{y}^k)$, 
while for \texttt{Alg.1 (v3)}, this gap is computed through the primal last iterate and the dual averaging sequence, i.e., $\Pc(x^k) - \Dc(\bar{y}^k)$.
Next, we compute the statistic mean over all $30$ instances and highlight it in a thick curve of Figure~\ref{fig:game_real-sim}, while the deviation range of the duality gap and CPU time is plotted in a shaded area.

From Figure~\ref{fig:game_real-sim}, we observe that \texttt{Alg.1~(v3)} converges faster than \texttt{Alg.1~(v1)}, \texttt{Mirror descent}, and \texttt{APD}. However, it exhibits the most oscillation behavior as shown through both the mean curve and the shaded deviation area. 
In fact, this is a normal behavior since it uses the last-iterate sequence which does not have a monotone decrease on the duality gap, and thus is less smooth than other curves, which use an averaging sequence.

\begin{figure}[hpt!]
\vspace{-0.5ex}
\begin{center}
	\includegraphics[width=0.45\textwidth]{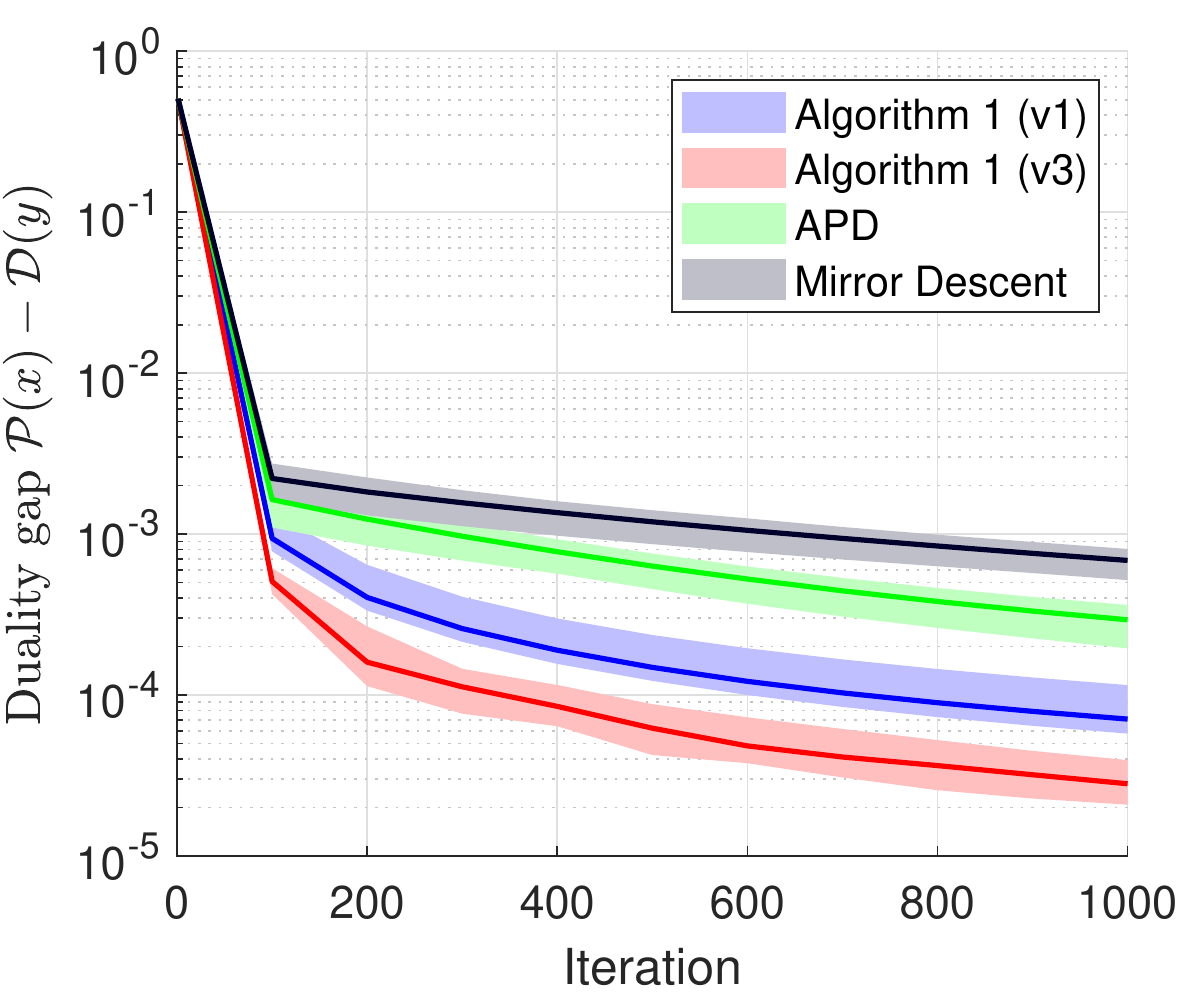}
	\includegraphics[width=0.45\textwidth]{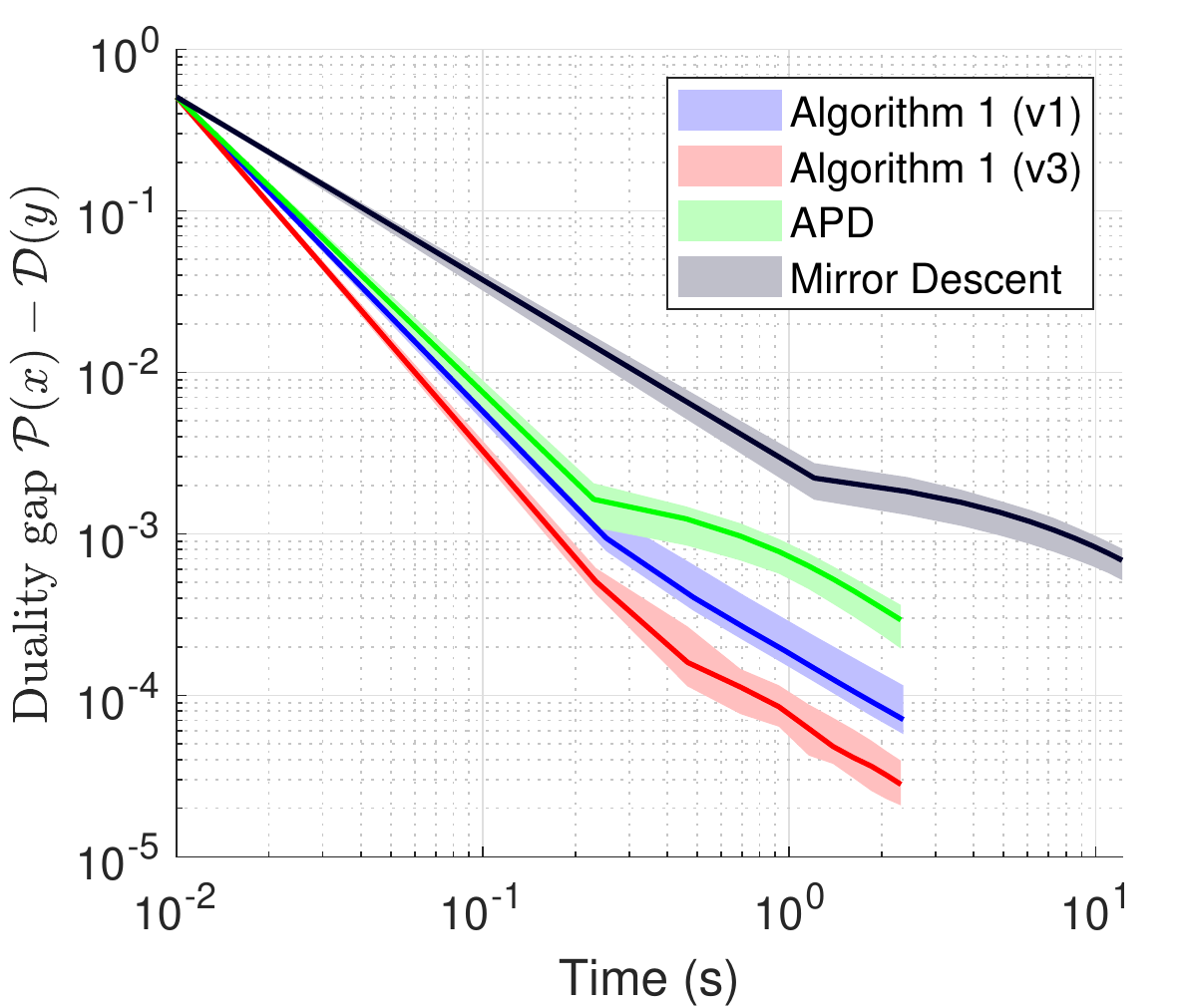}
	\vspace{-1.5ex}
	\caption{The average performance of the $4$ algorithms on $30$ problem instances of \eqref{eq:exp_game} using the \texttt{real-sim} dataset. 
	Left: Duality gap against iteration counter. Right: Duality gap against CPU time.}
	\label{fig:game_real-sim}
\vspace{-2ex}	
\end{center}
\end{figure}

%


\appendix

\section{ One-Iteration Analysis  of Algorithm~\ref{alg:A1}}
This appendix provides the full proofs of Lemma~\ref{lm:descent_property1}.

\subsection{General bound}\label{subsec:A_alg1_keylm}
We first prove the following general bound for our method.

\begin{lemma}\label{le:descent_property1}
Let $s^{k+1}$, $x^{k+1}$, and $\tilde{y}^{k+1}$ be updated by  \eqref{eq:scheme_s}, \eqref{eq:scheme_x}, and \eqref{eq:scheme_ytilde}, respectively.
Define $\Lb_g^k := \Lb_g\big( \tilde{y}^k + \rho_k [g(\hat{x}^k) - s^{k + 1}] \big)$.
Then, for any $(x, s)\in\dom{F}\times\dom{H}$, we have {\!\!\!}
\myeq{eq:keylemma_Phi2}{
\arraycolsep=0.1em
\begin{array}{rl}
	\Lc_{\rho_k}\!& (x^{k + 1}, s^{k + 1}, \tilde{y}^k) \ \leq \ \Lc_{\rho_k} (x, s, \tilde{y}^k) + L_k \iprods{x^{k + 1} - \hat{x}^k,\ x - x^{k + 1}}\vspace{1ex}\\
	& - {~} \frac{\rho_k}{2} \norms{[g(x) - s] - [g(\hat{x}^k) - s^{k + 1}]}^2 + \frac{1}{2}\big( \Lb_g^k  + L_f + \rho_k M_g^2\big) \norms{x^{k + 1} - \hat{x}^k}^2 \vspace{1ex}\\
	& - {~} \frac{\mu_h}{2}\norms{x^{k+1} - x}^2 - \frac{\mu_f}{2}\norms{\hat{x}^k - x}^2 - \frac{\mu_{H^{*}}}{2}\norms{\nabla{H}(s^{k+1}) - \nabla{H}(s)}^2.  
\end{array}
}
\end{lemma}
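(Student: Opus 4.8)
The plan is to bound the left-hand side $\Lc_{\rho_k}(x^{k+1},s^{k+1},\tilde{y}^k)$ by expanding it through its four additive pieces in \eqref{eq:aug_Lag}, namely $f(x^{k+1})$, $h(x^{k+1})$, $H(s^{k+1})$, and $\phi_{\rho_k}(x^{k+1},s^{k+1},\tilde{y}^k)$, estimating each via convexity/smoothness of the corresponding function together with the optimality conditions of the two prox steps, and then summing. A preliminary observation that glues everything together is that, by Moreau's identity \eqref{eq:keylm_yk_in_domH}, the dual iterate satisfies $y^{k+1} = \tilde{y}^k + \rho_k[g(\hat{x}^k) - s^{k+1}]$, which is exactly the vector at which $\Lb_g$ is evaluated in $\Lb_g^k$ and also equals $-\nabla_s\phi_{\rho_k}(\hat{x}^k,s^{k+1},\tilde{y}^k)$, while $\nabla_x\phi_{\rho_k}(\hat{x}^k,s^{k+1},\tilde{y}^k) = g'(\hat{x}^k)^\top y^{k+1}$. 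I would use $y^{k+1}$ throughout as the common ``linearization multiplier''. Since $y^{k+1}\in\dom{H^{*}}$ (it is a value of $\prox_{\rho_kH^{*}}$, equivalently $y^{k+1}\in\partial H(s^{k+1})$ by optimality of \eqref{eq:scheme_s}), every appeal to Assumption~\ref{as:A2}(b) below is legitimate.

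For the smooth pieces, I would combine $L_f$-smoothness and $\mu_f$-strong convexity of $f$ to get $f(x^{k+1}) \leq f(x) + \iprods{\nabla f(\hat{x}^k), x^{k+1} - x} + \tfrac{L_f}{2}\norms{x^{k+1}-\hat{x}^k}^2 - \tfrac{\mu_f}{2}\norms{x - \hat{x}^k}^2$, and I would expand $\phi_{\rho_k}$ around the base point $(\hat{x}^k,s^{k+1})$ using the exact identity $\phi_{\rho_k}(x^{k+1},s^{k+1},\tilde{y}^k) = \phi_{\rho_k}(\hat{x}^k,s^{k+1},\tilde{y}^k) + \iprods{g'(\hat{x}^k)^\top y^{k+1}, x^{k+1}-\hat{x}^k} + \Delta_{\rho_k}(x^{k+1},s^{k+1};\hat{x}^k,s^{k+1},\tilde{y}^k)$, where the $s$-gradient term drops out because both $s$-arguments equal $s^{k+1}$. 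Applying \eqref{eq:pd_Lipschitz_local_onlynl} and then \eqref{eq:M_g} (note $[g(x^{k+1})-s^{k+1}]-[g(\hat{x}^k)-s^{k+1}] = g(x^{k+1})-g(\hat{x}^k)$) bounds this $\Delta_{\rho_k}$ by $\tfrac12(\Lb_g^k + \rho_k M_g^2)\norms{x^{k+1}-\hat{x}^k}^2$. Together with the $f$-estimate this produces the coefficient $\tfrac12(\Lb_g^k + L_f + \rho_k M_g^2)$ on $\norms{x^{k+1}-\hat{x}^k}^2$.

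The optimality conditions supply the remaining quadratic and cross terms. From \eqref{eq:scheme_x}, $x^{k+1}$ minimizes an $(L_k+\mu_h)$-strongly convex surrogate; the associated three-point inequality, after rewriting $\tfrac{L_k}{2}\norms{x-\hat{x}^k}^2 - \tfrac{L_k}{2}\norms{x^{k+1}-\hat{x}^k}^2 = \tfrac{L_k}{2}\norms{x-x^{k+1}}^2 + L_k\iprods{x^{k+1}-\hat{x}^k, x - x^{k+1}}$, yields the cross term $L_k\iprods{x^{k+1}-\hat{x}^k, x-x^{k+1}}$ and the term $-\tfrac{\mu_h}{2}\norms{x^{k+1}-x}^2$. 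From \eqref{eq:scheme_s}, optimality gives $y^{k+1}\in\partial H(s^{k+1})$; the convexity inequality for $H$ — which, when $H^{*}$ is $\mu_{H^{*}}$-strongly convex, sharpens to $H(s) \geq H(s^{k+1}) + \iprods{y^{k+1}, s - s^{k+1}} + \tfrac{\mu_{H^{*}}}{2}\norms{\nabla H(s) - \nabla H(s^{k+1})}^2$ — furnishes the last term. Finally, writing $r := g(x)-s$ and $r^{k+1} := g(\hat{x}^k)-s^{k+1}$ and using that $\phi_{\rho_k}(\cdot,\cdot,\tilde{y}^k)$ is $\rho_k$-strongly convex as a function of the residual $r$, with gradient $y^{k+1}$ at $r^{k+1}$, gives $\phi_{\rho_k}(\hat{x}^k,s^{k+1},\tilde{y}^k) \leq \phi_{\rho_k}(x,s,\tilde{y}^k) - \iprods{y^{k+1}, r - r^{k+1}} - \tfrac{\rho_k}{2}\norms{r-r^{k+1}}^2$, which is precisely the $-\tfrac{\rho_k}{2}\norms{[g(x)-s]-[g(\hat{x}^k)-s^{k+1}]}^2$ term.

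Summing the five estimates reconstructs $\Lc_{\rho_k}(x,s,\tilde{y}^k)$ on the right together with all the advertised quadratic terms; the main bookkeeping step is to check that the linear terms cancel. The $\nabla f(\hat{x}^k)$ inner products and the $s$-inner products cancel in pairs, and the surviving $g'(\hat{x}^k)^\top y^{k+1}$ and $g(\cdot)$ contributions collapse to $-\iprods{y^{k+1}, g(x) - g(\hat{x}^k) - g'(\hat{x}^k)(x-\hat{x}^k)}$. This is exactly the negative of the first-order convexity gap of $x\mapsto\iprods{y^{k+1}, g(x)} = \Phi(x,y^{k+1})$ at $\hat{x}^k$; by the convexity-in-$x$ part of Assumption~\ref{as:A2}(b) and $y^{k+1}\in\dom{H^{*}}$ this gap is nonnegative, so the whole term is $\leq 0$ and may be dropped. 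I expect this last cancellation to be the main obstacle: it is the single place where the nonconvexity of $\Lc_{\rho_k}$ in $x$ must be confronted, and it succeeds only because the leftover coincides with a genuine convexity gap of the coupling $\Phi(\cdot,y^{k+1})$ rather than of $\phi_{\rho_k}$ itself. Collecting the surviving terms then yields \eqref{eq:keylemma_Phi2}.
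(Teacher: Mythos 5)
Your proposal is correct and follows essentially the same route as the paper: the same decomposition of $\Lc_{\rho_k}$ into $f$, $h$, $H$, and $\phi_{\rho_k}$, the same optimality conditions for the two prox steps, and the same use of Lemma~\ref{lm:Lipschitz_continuous} together with \eqref{eq:M_g} to produce the $\tfrac12(\Lb_g^k+L_f+\rho_kM_g^2)$ coefficient. The only (cosmetic) difference is that the paper absorbs the convexity gap $\iprods{y^{k+1},\,g(x)-g(\hat{x}^k)-g'(\hat{x}^k)(x-\hat{x}^k)}\ge 0$ into its lower bound on $\phi_{\rho_k}(x,s,\tilde{y}^k)$ via the left inequality of \eqref{eq:pd_Lipschitz_local_onlynl}, whereas you defer exactly the same inequality (namely \eqref{eq:lm_c_proof_g_y}) to the final cancellation step.
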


\begin{proof}
First, the optimality condition of the $x^{k+1}$-subproblem~\eqref{eq:scheme_x}, can be written as
\myeq{eq:f_opt}{
	0 =  \nabla h(x^{k + 1})  + \nabla{f}(\hat{x}^k) +   \nabla_x \phi_{\rho_k} (\hat{x}^k, s^{k + 1}, \tilde{y}^k)  + L_k( x^{k + 1} - \hat{x}^k),
}
for some $\nabla{h}(x^{k + 1}) \in \partial{h}(x^{k + 1})$.
Next, by the convexity of $h$ and $f$, and the $L_f$-smoothness of $f$, for any $x \in\dom{F}$, we have
\vspace{-0.5ex}
\myeqn{
\left\{\arraycolsep=0.2em
\begin{array}{lcl}
	h(x^{k + 1}) & \leq & h(x) + \iprods{\nabla{h}(x^{k + 1}),\ x^{k + 1} - x} - \frac{\mu_h}{2}\norms{x^{k+1} - x}^2,\vspace{1ex}\\
	f(x^{k+1}) & \leq & f(x) + \iprods{\nabla{f}(\hat{x}^k),\ x^{k+1} - x} + \frac{L_f}{2}\norms{x^{k+1} - \hat{x}^k}^2 - \frac{\mu_f}{2}\norms{\hat{x}^k - x}^2.
\end{array}\right.
\vspace{-0.5ex}
}
Combining these two inequalities and then using \eqref{eq:f_opt} and $F := f + h$, we can derive
\myeq{eq:p2_keylemma_proof_optimality}{
\begin{array}{lcl}
	F(x^{k + 1})   &\stackrel{\eqref{eq:f_opt}}\leq&  F(x) - \iprods{\nabla_x \phi_{\rho_k} (\hat{x}^k, s^{k + 1}, \tilde{y}^k),\ x^{k + 1} - x} + \frac{L_f}{2}\norms{x^{k+1} - \hat{x}^k}^2  \vspace{1ex}\\
	&& + {~}  L_k \iprods{x^{k + 1} - \hat{x}^k,\ x -  x^{k + 1}} - \frac{\mu_h}{2}\norms{x^{k+1} - x}^2 - \frac{\mu_f}{2}\norms{\hat{x}^k - x}^2.
\end{array}
}
Similarly, using the optimality condition of \eqref{eq:scheme_s} and the convexity and $\frac{1}{\mu_{H^{*}}}$-smoothness of $H$ (we allow $\mu_{H^{*}}$ to be zero, corresponding to the nonsmoothness of $H$), we have
\myeq{eq:keylm_s_descent}{
\arraycolsep=0.2em
\begin{array}{lcl}
	H(s^{k + 1}) & \leq & H(s) + \iprods{\nabla_s \phi_{\rho_k} (\hat{x}^k, s^{k + 1}, \tilde{y}^k),\ s - s^{k + 1}}   - \frac{\mu_{H^{*}}}{2}\norms{\nabla{H}(s^{k+1}) - \nabla{H}(s)}^2.  
\end{array}	
}
Using \eqref{eq:pd_Lipschitz_local_onlynl} in Lemma~\ref{lm:Lipschitz_continuous} and $\Lb_g^k := \Lb_g \big( \tilde{y}^k + \rho_k [g(\hat{x}^k) - s^{k + 1}] \big)$, for any $(x, s)$, we get
\myeqn{
\left\{\arraycolsep=0.2em
\begin{array}{lcl}
	\phi_{\rho_k} (x^{k + 1}, s^{k + 1}, \tilde{y}^k) & \leq & \phi_{\rho_k} (\hat{x}^k, s^{k + 1}, \tilde{y}^k) + \iprods{\nabla_x \phi_{\rho_k} (\hat{x}^k, s^{k + 1}, \tilde{y}^k),\ x^{k + 1} - \hat{x}^k}\vspace{1ex}\\
	& & + {~}  \frac{\rho_k}{2} \norms{g(x^{k + 1}) - g(\hat{x}^k)}^2 + \frac{\Lb_g^k}2 \norms{x^{k + 1} - \hat{x}^k}^2,\vspace{1.5ex}\\
	\phi_{\rho_k} (x, s, \tilde{y}^k) & \geq & \phi_{\rho_k} (\hat{x}^k, s^{k + 1}, \tilde{y}^k) + \iprods{\nabla_x \phi_{\rho_k} (\hat{x}^k, s^{k + 1}, \tilde{y}^k),\ x - \hat{x}^k}\vspace{1ex}\\
	& & + {~}  \iprods{\nabla_s \phi_{\rho_k} (\hat{x}^k, s^{k + 1}, \tilde{y}^k),\ s - s^{k + 1}}  +  \frac{\rho_k}{2} \norms{[g(x) - s] - [g(\hat{x}^k) - s^{k + 1}]}^2.
\end{array}\right.
}
By \eqref{eq:M_g}, the above two inequalities imply
\myeq{eq:p2_keylemma_proof_psi}{
\arraycolsep=0.3em
\hspace{-1ex}
\begin{array}{lcl}
	\phi_{\rho_k}(x^{k + 1}, s^{k + 1}, \tilde{y}^k) & \leq & \phi_{\rho_k} (x, s, \tilde{y}^k) + \iprods{\nabla_x \phi_{\rho_k} (\hat{x}^k, s^{k + 1}, \tilde{y}^k),\ x^{k + 1} - x}\vspace{1ex}\\
	&& + {~}  \iprods{\nabla_s \phi_{\rho_k} (\hat{x}^k, s^{k + 1}, \tilde{y}^k),\ s^{k + 1} - s}  + \frac{\Lb_g^k + \rho_k M_g^2}{2} \norms{x^{k + 1} - \hat{x}^k}^2 \vspace{1ex}\\
	&& - {~}  \frac{\rho_k}{2} \norms{[g(x) - s] - [g(\hat{x}^k) - s^{k + 1}]}^2.
\end{array}
\hspace{-1ex}
}
Now, combining \eqref{eq:p2_keylemma_proof_optimality}, \eqref{eq:keylm_s_descent}, and \eqref{eq:p2_keylemma_proof_psi}, we can derive
\myeqn{
\arraycolsep=0.2em
\begin{array}{lcl}
	\Lc_{\rho_k}(x^{k + 1}, s^{k + 1}, \tilde{y}^k) & = & F(x^{k + 1}) + H(s^{k + 1}) + \phi_{\rho_k} (x^{k + 1}, s^{k + 1}, \tilde{y}^k)\vspace{1ex}\\
	& \leq & F(x) + H(s) + \phi_{\rho_k} (x, s, \tilde{y}^k) + L_k \iprods{x^{k + 1} - \hat{x}^k,\ x - x^{k + 1}}\vspace{1ex}\\
	&& - {~}  \frac{\rho_k}{2} \norms{[g(x) - s] - [g(\hat{x}^k) - s^{k + 1}]}^2 + \frac{\Lb_g^k + L_f + \rho_k M_g^2}{2} \norms{x^{k + 1} - \hat{x}^k}^2 \vspace{1ex}\\
	&& - {~} \frac{\mu_h}{2}\norms{x^{k+1} - x}^2 - \frac{\mu_f}{2}\norms{\hat{x}^k - x}^2 - \frac{\mu_{H^{*}}}{2}\norms{\nabla{H}(s^{k+1}) - \nabla{H}(s)}^2,
\end{array}
}
which proves \eqref{eq:keylemma_Phi2}.
\end{proof}

\subsection{The proof of Lemma \ref{lm:descent_property1}}
First, let us recall the following simple expressions: {\!\!\!}
\myeqn{
\arraycolsep=0.1em
\begin{array}{lcl}
	(1-\tau_k)\norms{x^{k+1} {\!} - x^k}^2 + \tau_k\norms{x^{k+1} {\!} - x}^2 &= & \norms{x^{k+1} {\!} - (1-\tau_k)x^k - \tau_kx}^2 + \tau_k(1-\tau_k)\norms{x^k - x}^2, \vspace{1.25ex}\\
	(1-\tau_k)\norms{\hat{x}^{k} - x^k}^2 + \tau_k\norms{\hat{x}^{k} - x}^2 & = & \norms{\hat{x}^{k} - (1-\tau_k)x^k - \tau_kx}^2 + \tau_k(1-\tau_k)\norms{x^k - x}^2. 
\end{array}
}
Plugging $(x, s) := (x^k, s^k)$ in \eqref{eq:keylemma_Phi2} of Lemma \ref{le:descent_property1}, we obtain
\myeqn{
\arraycolsep=0.2em
\begin{array}{lcl}
	\Lc_{\rho_k} (x^{k + 1}, s^{k + 1}, \tilde{y}^k) & \leq & \Lc_{\rho_k} (x^k, s^k, \tilde{y}^k) + L_k \iprods{x^{k + 1} - \hat{x}^k,\ x^k - x^{k + 1}}\vspace{1ex}\\
	& & - {~}  \frac{\rho_k}{2} \norms{[g(x^k) - s^k] - [g(\hat{x}^k) - s^{k + 1}]}^2 + \frac{\Lb_g^k + L_f + \rho_k M_g^2}{2} \norms{x^{k + 1} - \hat{x}^k}^2 \vspace{1ex}\\
	&& - {~} \frac{\mu_f}{2}\norms{\hat{x}^k - x^k}^2 - \frac{\mu_h}{2}\norms{x^{k+1} - x^k}^2.
\end{array}
}
Now, multiplying the above estimate by $1 - \tau_k \in [0, 1)$, and \eqref{eq:keylemma_Phi2} by $\tau_k \in (0, 1]$, and then summing up the results, 
and using the first two elementary expressions, we get
\myeq{eq:p2_keylemma_proof_sumtau}{
\hspace{-3ex}
\arraycolsep=0.2em
\begin{array}{lcl}
	\Lc_{\rho_k} (x^{k + 1}, s^{k + 1}, \tilde{y}^k) & \leq & (1 - \tau_k)\Lc_{\rho_k} (x^k, s^k, \tilde{y}^k)  + \tau_k \Lc_{\rho_k} (x, s, \tilde{y}^k) \vspace{1ex}\\
	&& + {~} L_k  \iprods{x^{k + 1} - \hat{x}^k, (1-\tau_k)x^k + \tau_k x - x^{k+1}}\vspace{1ex}\\
	& & - {~}  \frac{(1 - \tau_k)\rho_k}{2} \norms{[g(x^k) - s^k] - [g(\hat{x}^k) - s^{k + 1}]}^2\vspace{1ex}\\
	& & - {~}  \frac{\tau_k \rho_k}{2} \norms{[g(x) - s] - [g(\hat{x}^k) - s^{k + 1}]}^2 + \frac{\Lb_g^k + L_f + \rho_k M_g^2}{2} \norms{x^{k + 1} - \hat{x}^k}^2 \vspace{1ex}\\
	&& - {~} \frac{\mu_h\tau_k^2}{2}\norms{\frac{1}{\tau_k}[x^{k+1} - (1-\tau_k)x^k] - x}^2   -  \frac{(\mu_f + \mu_h)\tau_k(1-\tau_k)}{2}\norms{x^k - x}^2 \vspace{1ex}\\
	&& - {~} \frac{\mu_f\tau_k^2}{2}\norms{\frac{1}{\tau_k}[\hat{x}^k - (1-\tau_k)x^k] - x}^2.
\end{array}
\hspace{-2ex}
}
Next, by the definition of $\Lc_{\rho_k}$ in \eqref{eq:aug_Lag}, for any $y \in \dom{H^{*}}$, we have
\myeq{eq:lm_projz_Lrho}{
\arraycolsep=0.1em
\begin{array}{rl}
	\Lc_{\rho_k} (x^{k + 1}, s^{k + 1}, y) & - {~} (1 - \tau_k)\Lc_{\rho_k} (x^k, s^k, y)\vspace{1ex}\\
	= & \Lc_{\rho_k} (x^{k + 1}, s^{k + 1}, \tilde{y}^k) - (1 - \tau_k)\Lc_{\rho_k} (x^k, s^k, \tilde{y}^k)\vspace{1ex}\\
	& + {~}  \iprods{y - \tilde{y}^k,\ [g(x^{k + 1}) - s^{k + 1}] - (1 - \tau_k)[g(x^k) - s^k]} \hfill\quad (=: \Tc_1).
\end{array}
}
To analyze the term $\Tc_1$ in \eqref{eq:lm_projz_Lrho}, we denote $\Theta_k :=  g(x^{k}) - g(\hat{x}^{k-1}) + \frac{1}{\rho_{k-1}} (y^{k} - \tilde{y}^{k-1})$ and 
\myeq{eq:keylm_u}{
	u^{k + 1} := \eta_k \left([g(x^{k + 1}) - s^{k + 1}] - (1 - \tau_k)[g(x^k) - s^k]\right) = \eta_k [\Theta_{k + 1} - (1 - \tau_k)\Theta_k].
}
Therefore, $\tilde{y}^{k + 1}  =  \tilde{y}^k + u^{k + 1}$, and $\Tc_1$ becomes
\myeq{eq:keylm_z}{
\arraycolsep=0.2em
\begin{array}{lcl}
	\Tc_1 & \overset{\tiny\eqref{eq:keylm_u}}{=} & \frac{1}{\eta_k} \iprods{y - \tilde{y}^k,\ u^{k + 1}}  = 
	\frac{1}{2\eta_k} \left[ \norms{\tilde{y}^k - y}^2  - \norms{\tilde{y}^k + u^{k + 1} - y}^2 + \norms{u^{k + 1}}^2 \right] \vspace{1ex}\\
	& = & \frac{1}{2\eta_k} \left[ \norms{\tilde{y}^k - y}^2 - \norms{\tilde{y}^{k + 1} - y}^2 + \norms{u^{k + 1}}^2 \right].
\end{array}
}
Substituting \eqref{eq:keylm_z} into \eqref{eq:lm_projz_Lrho}, and then combining with \eqref{eq:p2_keylemma_proof_sumtau}, we can further derive
\myeq{eq:keylemma_almost_final_projz}{
\hspace{-3ex}
\arraycolsep=0.1em
\begin{array}{rl}
	\Lc_{\rho_k} & (x^{k + 1}, s^{k + 1}, y) \ \leq  \ (1 - \tau_k)\Lc_{\rho_k} (x^k, s^k, y)\hfill (:= \Tc_2)\vspace{1ex}\\
	& + {~}  \tau_k \Lc_{\rho_k} (x, s, \tilde{y}^k) -  \frac{\tau_k \rho_k}{2} \norms{[g(x) - s] - [g(\hat{x}^k) - s^{k + 1}]}^2\hfill (:= \Tc_3)\vspace{1ex}\\
	& + {~}  L_k\iprods{x^{k + 1} - \hat{x}^k, (1-\tau_k)x^k + \tau_k x - x^{k + 1}} + \frac{\Lb_g^k + L_f + \rho_k M_g^2}{2} \norms{x^{k + 1} - \hat{x}^k}^2\hfill\quad (:= \Tc_4)\vspace{1ex}\\
	& + {~}  \frac{1}{2\eta_k} \left[ \norms{\tilde{y}^k - y}^2 - \norms{\tilde{y}^{k + 1} - y}^2 \right] + \frac{1}{2\eta_k} \norms{u^{k + 1}}^2 \vspace{1ex}\\
	& - {~} \frac{(1 - \tau_k)\rho_k}{2} \norms{[g(x^k) - s^k] - [g(\hat{x}^k) - s^{k + 1}]}^2    -  \frac{(\mu_f + \mu_h)\tau_k(1-\tau_k)}{2}\norms{x^k - x}^2 \vspace{1ex}\\
	& - {~} \frac{\mu_h\tau_k^2}{2}\norms{\frac{1}{\tau_k}[x^{k+1} - (1-\tau_k)x^k] - x}^2 -  \frac{\mu_f\tau_k^2}{2}\norms{\frac{1}{\tau_k}[\hat{x}^k - (1-\tau_k)x^k] - x}^2.
\end{array}
}
We now estimate the terms $\Tc_2$, $\Tc_3$, and $\Tc_4$ in \eqref{eq:keylemma_almost_final_projz}. It is easy to see that
\myeq{eq:keylemma_T2}{
	\Tc_2 = (1 - \tau_k)\left[\Lc_{\rho_{k - 1}} (x^k, s^k, y) + \tfrac{(\rho_k - \rho_{k - 1})}{2} \norms{g(x^k) - s^k}\right].
}
By the definition of $\breve{y}^{k + 1} := (1-\tau_k)\breve{y}^k + \tau_k\big(\tilde{y}^k + \rho_k[g(\hat{x}^k) - s^{k+1}]\big)$ in \eqref{eq:y_average}, we have
\myeq{eq:keylm_use_ybar}{
	\Tc_3 = \Lc(x, s, \breve{y}^{k + 1}) - (1 - \tau_k)\Lc(x, s, \breve{y}^k) - \frac{\tau_k \rho_k}{2} \norms{g(\hat{x}^k) - s^{k + 1}}^2.
}
Using the relation $2\iprods{u,v} = \norms{u}^2 - \norms{u-v}^2 + \norms{v}^2$, we further have
\myeq{eq:keylemma_T4}{
\arraycolsep=0.2em
\begin{array}{lcl}
	\Tc_4 & = & \frac{L_k\tau_k^2}{2} \norms{\frac{1}{\tau_k}[\hat{x}^k - (1-\tau_k)x^k] - x}^2 -  \frac{L_k\tau_k^2}{2} \norms{\frac{1}{\tau_k}[x^{k + 1} - (1-\tau_k)x^k] - x}^2\vspace{1ex}\\
	&&  - {~} \frac{1}{2} \left(L_k - \Lb_g^k - L_f - \rho_k M_g^2\right)\norms{x^{k + 1} - \hat{x}^k}^2.
\end{array}
}
Substituting \eqref{eq:keylemma_T2}, \eqref{eq:keylm_use_ybar}, and \eqref{eq:keylemma_T4} into \eqref{eq:keylemma_almost_final_projz}, we get
\myeq{eq:keylemma_almost_final_dual1}{
\hspace{-5ex}
\arraycolsep=0.1em
\begin{array}{rl}
	\Lc_{\rho_k} (x^{k + 1}, & s^{k + 1}, y)  - {~} \Lc(x, s, \breve{y}^{k + 1}) \leq (1 - \tau_k)[\Lc_{\rho_{k - 1}} (x^k, s^k, y) - \Lc(x, s, \breve{y}^k)]\vspace{1ex}\\
	& + {~}  \frac{\tau_k^2}{2}(L_k-\mu_f)\norms{\frac{1}{\tau_k}[\hat{x}^k - (1-\tau_k)x^k] - x}^2  - \frac{(\mu_f + \mu_h)\tau_k(1-\tau_k)}{2}\norms{x^k - x}^2  \vspace{1ex}\\
	& - {~} \frac{\tau_k^2}{2}(L_k + \mu_h)\norms{\frac{1}{\tau_k}[x^{k + 1} - (1-\tau_k)x^k] - x}^2 +  \frac{1}{2\eta_k} \big[ \norms{\tilde{y}^k - y}^2 - \norms{\tilde{y}^{k + 1} - y}^2 \big]  \vspace{1ex}\\
	&  - {~}  \frac{1}{2} \left(L_k - \Lb_g^k - L_f - \rho_k M_g^2\right)\norms{x^{k + 1} - \hat{x}^k}^2 + \Tc_5, 
\end{array}
\hspace{-1ex}
}
where $\Tc_5$ is defined and upper bounded as follows:
\myeq{eq:keylemma_almost_final2_projz}{
\arraycolsep=0.2em
\begin{array}{lcl}
	\Tc_5 & :=  & \frac{1}{2\eta_k} \norms{u^{k + 1}}^2 + \frac{(1 - \tau_k)(\rho_k - \rho_{k - 1})}{2} \norms{g(x^k) - s^k}^2 - \frac{\tau_k \rho_k}{2} \norms{g(\hat{x}^k) - s^{k + 1}}^2\vspace{1ex}\\
	&& - {~}  \frac{(1 - \tau_k)\rho_k}{2} \norms{[g(x^k) - s^k] - [g(\hat{x}^k) - s^{k + 1}]}^2\vspace{1ex}\\
	& = & \frac{1}{2\eta_k} \norms{u^{k + 1}}^2 - \frac{\rho_k}2 \norms{[g(\hat{x}^k) - s^{k + 1}] - (1 - \tau_k)[g(x^k) - s^k]}^2 \vspace{1ex}\\
	&& - {~}  \frac{(1 - \tau_k)[\rho_{k - 1} - (1 - \tau_k)\rho_k]}2 \norms{g(x^k) - s^k}^2\vspace{1ex}\\
	& \leq & \frac{\rho_k \eta_k}{2(\rho_k - \eta_k)} \norms{g(x^{k + 1}) - g(\hat{x}^k)}^2 - \frac{(1 - \tau_k)[\rho_{k - 1} - (1 - \tau_k)\rho_k]}2 \norms{g(x^k) - s^k}^2 \vspace{1ex}\\
	& \overset{\tiny\eqref{eq:M_g}}{\leq} & \frac{\rho_k \eta_k M_g^2}{2(\rho_k - \eta_k)} \norms{x^{k + 1} - \hat{x}^k}^2 - \frac{(1 - \tau_k)[\rho_{k - 1} - (1 - \tau_k)\rho_k]}2 \norms{g(x^k) - s^k}^2.
\end{array}
}
Finally, substituting \eqref{eq:keylemma_almost_final2_projz} into \eqref{eq:keylemma_almost_final_dual1}, we eventually get \eqref{eq:p2_keylemma}.
\Eproof

\section{Ergodic Convergence}\label{sec:apdx:ergodic_rate}
This appendix proves Theorems~\ref{thm:BigO_ergodic_rate} and~\ref{thm:alg2_ergodic}.

\subsection{Technical Lemmas}\label{subsec:alg1_thm_ergodic}
Let us first bound $\sets{ \norms{\tilde{y}^k - y^{\star}}}$ and $\sets{\norms{x^k - x^{\star}}}$.

\begin{lemma}\label{le:bound_of_Lk}
Let $\sets{(x^k, \tilde{y}^k)}$ be generated by Algorithm~\ref{alg:A1}, where $L_k$, $\rho_k$, and $\eta_k$ satisfy \eqref{eq:non_erg_T_def2} and \eqref{eq:param_update_ergodic_O1}.
Then, for all $k \geq 0$, we have
\vspace{0.5ex}
\myeq{eq:non_erg_T_bd}{
	L_g \big[ \norms{y^{\star}} + \norms{\tilde{y}^k - y^{\star}} + \rho_k M_g \norms{x^k - x^{\star}} \big] \leq \rho_k C.
\vspace{0.5ex}
}
\end{lemma}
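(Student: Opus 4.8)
The plan is to prove \eqref{eq:non_erg_T_bd} by induction on $k$, with the bound itself as the induction hypothesis. Its purpose is to keep the Lipschitz modulus $\Lb_g^k = \Lb_g(y^{k+1})$ occurring in Lemma~\ref{lm:descent_property1} below $C$, which makes the coefficient of $\norms{x^{k+1}-\hat{x}^k}^2$ in \eqref{eq:p2_keylemma} nonnegative; that nonnegativity then produces a monotone energy which re-establishes the hypothesis one step later. Throughout I use that the parameters in \eqref{eq:non_erg_T_def2}--\eqref{eq:param_update_ergodic_O1} are constant ($\rho_k = 1$, $\eta_k = \tfrac12$, $L_k = L := L_f + C + 2M_g^2$, $\mu_f = \mu_h = 0$) and that $\tau_k = 1$, $\beta_{k+1} = 0$ force $\hat{x}^k = x^k$.

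First I would show that \eqref{eq:non_erg_T_bd} at step $k$ implies $\Lb_g^k \leq \rho_k C$. By \eqref{eq:keylm_yk_in_domH} one has $y^{k+1} = \tilde{y}^k + \rho_k[g(\hat{x}^k) - s^{k+1}] = \prox_{\rho_k H^*}(\tilde{y}^k + \rho_k g(x^k))$, while \eqref{eq:opt_cond} gives $g(x^{\star}) \in \partial H^{*}(y^{\star})$, i.e. $y^{\star} = \prox_{\rho_k H^*}(y^{\star} + \rho_k g(x^{\star}))$. Nonexpansiveness of the prox operator and the $M_g$-Lipschitz bound \eqref{eq:M_g} then yield $\norms{y^{k+1} - y^{\star}} \leq \norms{\tilde{y}^k - y^{\star}} + \rho_k M_g\norms{x^k - x^{\star}}$. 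Combining with $\Lb_g^k \leq L_g\norms{y^{k+1}} \leq L_g[\norms{y^{\star}} + \norms{y^{k+1} - y^{\star}}]$ from Assumption~\ref{as:A2}(b,ii) reproduces exactly the left-hand side of \eqref{eq:non_erg_T_bd}, so the hypothesis gives $\Lb_g^k \leq \rho_k C = C$.

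Next, with $\Lb_g^k \leq C$ the coefficient $L_k - \Lb_g^k - L_f - \tfrac{\rho_k^2 M_g^2}{\rho_k - \eta_k} = C - \Lb_g^k \geq 0$, so the last term of \eqref{eq:p2_keylemma} can be dropped. I would apply Lemma~\ref{lm:descent_property1} at $(x,s,y) = (x^{\star}, s^{\star}, y^{\star})$ with $s^{\star} := g(x^{\star}) \in \partial H^{*}(y^{\star})$. The key simplification is that, since $g(x^{\star}) - s^{\star} = 0$, the map $y \mapsto \Lc(x^{\star}, s^{\star}, y)$ is constant and equals $\widetilde{\Lc}(x^{\star}, y^{\star})$ by \eqref{eq:saddle_pt_s}; hence $\Lc_{\rho_k}(x^{k+1}, s^{k+1}, y^{\star}) - \Lc(x^{\star}, s^{\star}, \breve{y}^{k+1}) \geq \widetilde{\Lc}(x^{k+1}, y^{\star}) - \widetilde{\Lc}(x^{\star}, y^{\star}) \geq 0$. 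With $\tau_k = 1$ and $\hat{x}^k = x^k$, the surviving terms collapse to the monotonicity $E_{k+1} \leq E_k$ of the energy $E_k := \tfrac{L}{2}\norms{x^k - x^{\star}}^2 + \tfrac{1}{2\eta}\norms{\tilde{y}^k - y^{\star}}^2$, so that $E_{k+1} \leq E_0$, which gives $\norms{x^{k+1} - x^{\star}}^2 \leq \norms{x^0 - x^{\star}}^2 + \tfrac{2}{L}\norms{y^0 - y^{\star}}^2$ and $\norms{\tilde{y}^{k+1} - y^{\star}}^2 \leq \tfrac{L}{2}\norms{x^0 - x^{\star}}^2 + \norms{y^0 - y^{\star}}^2$.

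The main obstacle is the final constant bookkeeping: one must check that these bounds, evaluated via $D \geq \max\set{\norms{x^0 - x^{\star}}, \norms{y^0 - y^{\star}}, \norms{y^{\star}}}$, still fit inside $\rho_{k+1}C = C$. The delicate point is that the bound on $\norms{\tilde{y}^{k+1} - y^{\star}}$ inherits the large factor $L/2$; here I would use $L = L_f + C + 2M_g^2 \leq 2C - 2$ (from $C \geq L_f + 2M_g^2 + 2$) to get $\tfrac{L}{2} + 1 \leq C$, hence $\norms{\tilde{y}^{k+1} - y^{\star}} \leq D\sqrt{C}$, together with $\norms{x^{k+1} - x^{\star}} \leq D\sqrt{2}$. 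Writing $a := L_g D$, the left-hand side of \eqref{eq:non_erg_T_bd} at step $k+1$ is then at most $a + a\sqrt{C} + \sqrt{2}\,aM_g$, and the precise form $C \geq L_g D(L_g D + 4M_g + 2) = a^2 + 4aM_g + 2a$ is exactly what forces $a + a\sqrt{C} + \sqrt{2}\,aM_g \leq C$ (after squaring, using $4M_g \geq 2\sqrt{2}\,M_g$). The base case $k=0$ is immediate from $\tilde{y}^0 = y^0$, $\hat{x}^0 = x^0$ and the definition of $C$, since $L_g D(2 + M_g) \leq L_g D(L_g D + 4M_g + 2) \leq C$.
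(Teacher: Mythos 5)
Your proposal is correct and follows essentially the same route as the paper's proof: induction with the stated bound as hypothesis, nonexpansiveness of $\prox_{\rho_k H^{*}}$ together with $y^{\star} = \prox_{\rho_k H^{*}}(y^{\star} + \rho_k g(x^{\star}))$ to get $\Lb_g^k \leq \rho_k C$, the resulting nonnegativity of the $\norms{x^{k+1}-\hat{x}^k}^2$ coefficient in \eqref{eq:p2_keylemma} to obtain the monotone energy $E_{k+1}\leq E_k$, and then constant bookkeeping to close the induction. The only difference is cosmetic: the paper routes the final step through the condition \eqref{eq:para_cond1}, which it asserts "one can easily check," whereas you unroll that check explicitly in terms of $D$ — your verification is consistent with it.
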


\begin{proof}
Denote $\Rc_0^2(x^{\star}, y^{\star}) := L_0\norms{x^0 - x^{\star}}^2 + \frac{1}{\eta_0}\norms{y^0 - y^{\star}}^2$.
Then, after a few elementary calculations, one can easily check that $C$, $L_0$, $\rho_0$, and $\eta_0$ chosen by \eqref{eq:non_erg_T_def2} satisfy
\myeq{eq:para_cond1}{
L_g \big[\norms{y^{\star}} + \big(\sqrt{\eta_0} + \tfrac{\rho_0 M_g}{\sqrt{L_0}}\big)\Rc_0(x^{\star}, y^{\star})\big] \leq \rho_0 C.
}
Now, we prove \eqref{eq:non_erg_T_bd} by induction. 
For $k = 0$, \eqref{eq:non_erg_T_bd} holds due to \eqref{eq:para_cond1}.

Suppose that \eqref{eq:non_erg_T_bd} holds for all $k \in \set{0,1,\cdots, K}$ with some $K \geq 0$, i.e., $L_g \big[ \norms{y^{\star}} + \norms{\tilde{y}^k - y^{\star}} + \rho_k M_g \norms{x^k - x^{\star}} \big] \leq \rho_k C$.
We now prove that \eqref{eq:non_erg_T_bd} also holds for $K + 1$. 
Indeed, using $y^{\star} = \prox_{\rho_k H^{*}} \left(y^{\star} + \rho_k g(x^{\star})\right)$ from \eqref{eq:opt_cond}, for $0 \leq k \leq K$ we have
\myeq{eq:ergodic_Lk_bd}{
\arraycolsep=0.2em
\begin{array}{lcl}
	\Lb_g^k & \stackrel{\eqref{eq:in_domH}}= & \Lb_g \left(\prox_{\rho_k H^{*}} \left(\tilde{y}^k + \rho_k g(x^k)\right)\right)\\
	& \stackrel{\eqref{eq:opt_cond}}= & \Lb_g \left(\prox_{\rho_k H^{*}} \left(\tilde{y}^k + \rho_k g(x^k)\right) - \prox_{\rho_k H^{*}}\left(y^{\star} + \rho_k g(x^{\star})\right) + y^{\star}\right)\vspace{1ex}\\
	& \leq & L_g \left[ \norms{\prox_{\rho_k H^{*}} \left(\tilde{y}^k + \rho_k g(x^k)\right) - \prox_{\rho_k H^{*}} \left(y^{\star} + \rho_k g(x^{\star})\right)} + \norms{y^{\star}}\right]\vspace{1ex}\\
	& \leq & L_g\left[ \norms{[\tilde{y}^k + \rho_k g(x^k)] - [y^{\star} + \rho_k g(x^{\star})]} + \norms{y^{\star}}\right]\vspace{1ex}\\
	& \leq & L_g \big[ \norms{y^{\star}} + \norms{\tilde{y}^k - y^{\star}} + \rho_k M_g \norms{x^k - x^{\star}} \big] \leq \rho_kC, 
\end{array}
}
where, in the third line, we applied Assumption \ref{as:A2}(b), in the fourth line we used the non-expansiveness of $\prox_{\rho_kH^{*}}(\cdot)$, and the last inequality is due to our induction assumption.

By the definition of $L_k$ and $\eta_k$ in \eqref{eq:non_erg_T_def2}, and using \eqref{eq:ergodic_Lk_bd}, for $0 \leq k \leq K$, we have 
\myeq{eq:non_erg_beta_term}{
	L_k - \Lb_g^k - L_f - \tfrac{{\rho_k^2} M_g^2}{\rho_k - \eta_k} \geq L_f + \rho_k(C + 2M_g^2) - \rho_k C - L_f - \tfrac{{\rho_k^2} M_g^2}{\rho_k/2}  = 0.
}
Using this estimate, we substitute $\mu_f = \mu_h := 0$, $\tau_k := 1$ and $\hat{x}^k = x^k$ (since $\beta_k = 0$) into \eqref{eq:p2_keylemma} of Lemma~\ref{lm:descent_property1} to obtain for any $(x, s, y)\in\dom{F}\times\dom{H}\times\dom{H^{*}}$ that
\myeq{eq:ergodic_induction}{
\arraycolsep=0.2em
\begin{array}{lcl}
	\Lc_{\rho}  (x^{k + 1}, s^{k + 1}, y) - \Lc(x, s, y^{k + 1})  & \leq & \frac{L}{2} \left[ \norms{x^k - x}^2 - \norms{x^{k + 1} - x}^2 \right] \vspace{1ex}\\
	&& + {~}  \frac{1}{2\eta} \left[ \norms{\tilde{y}^k - y}^2 - \norms{\tilde{y}^{k + 1} - y}^2 \right].
\end{array}
}
Here, we note that $L_k = L$ and $\eta_k = \eta$ for all $k\geq 0$.
By \eqref{eq:saddle_pt_s}, we have $\Lc_{\rho}  (x^{k + 1}, s^{k + 1}, y^{\star}) - \Lc(x^{\star}, s^{\star}, y^k) \geq 0$.
Hence, \eqref{eq:ergodic_induction} implies that
\myeqn{
	L \norms{x^{k + 1} - x^{\star}}^2 + \frac{1}{\eta} \norms{\tilde{y}^{k + 1} - y^{\star}}^2 \leq L\norms{x^k - x^{\star}}^2 + \frac{1}{\eta} \norms{\tilde{y}^k - y^{\star}}^2.
}
Since the above inequality holds for all $0 \leq k \leq K$, we can show by induction that
\myeqn{
\begin{array}{lcl}
	L \norms{x^{K + 1} - x^{\star}}^2  + \frac{1}{\eta} \norms{\tilde{y}^{K + 1} - y^{\star}}^2 & \leq & L \norms{x^K - x^{\star}}^2 + \frac1\eta \norms{\tilde{y}^K - y^{\star}}^2\vspace{1ex}\\
	& \leq & L \norms{x^0 - x^{\star}}^2 + \frac{1}{\eta} \norms{y^0 - y^{\star}}^2 = \Rc_0^2 (x^{\star}, y^{\star}).
\end{array}
}
The last inequality leads to $\norms{x^{K + 1} - x^{\star}} \leq \frac{1}{\sqrt{L}} \Rc_0(x^{\star}, y^{\star})$ and $\norms{\tilde{y}^{K + 1} - y^{\star}} \leq \sqrt{\eta} \Rc_0(x^{\star}, y^{\star})$. 
Using these bounds, \eqref{eq:para_cond1}, and $\rho_k = \rho_0 = \rho$, we can derive
\myeqn{
	L_g \big[ \norms{y^{\star}} + \norms{\tilde{y}^{K + 1} {\!\!\!} - y^{\star}} + \rho_k M_g \norms{x^{K + 1}  {\!\!\!} - x^{\star}}\big] 
	\leq L_g \big[\norms{y^{\star}} + \big(\sqrt{\eta_0} + \tfrac{\rho_0 M_g}{\sqrt{L_0}} \big)\Rc_0 (x^{\star}, y^{\star})\big] 
	\stackrel{\eqref{eq:para_cond1}}\leq \rho_0 C.
}
Hence, we prove that \eqref{eq:non_erg_T_bd} also holds for $K + 1$. 
By induction, it holds for all $k \geq 0$.
\end{proof}

\begin{lemma}\label{le:param_properties}
Let $\rho_k$, $L_k$, and $\eta_k$ be updated by \eqref{eq:scvx_ergodic_param_init} and \eqref{eq:scvx_ergodic_param_update}.
Then, for $k \geq 0$, we have
\myeq{eq:ergodic_O2_params_props}{
\left\{\begin{array}{l}
L_k \geq L_f + \rho_k(C + 2M_g^2), \quad 
\rho_k \geq \rho_0 + P_0 k, \quad 
0 < \theta_{k+1} \leq 1,\vspace{1ex}\\
L_k + \mu_h =  \frac{L_{k+1} - \mu_f}{\theta_{k + 1}},\quad \rho_k =  \theta_{k + 1} \rho_{k + 1},\quad \text{and} \quad \eta_k = \theta_{k + 1} \eta_{k + 1},
\end{array}\right.
}
where $P_0 := \frac{\rho_0}{2L_0}\big[ \sqrt{4L_0(\mu_f + \mu_h) + (2L_0 - \mu_f)^2} - (2L_0 - \mu_f) \big] > 0$.
\end{lemma}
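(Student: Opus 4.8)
The plan is to establish the six relations in \eqref{eq:ergodic_O2_params_props} in three groups, in roughly the order listed, isolating the linear growth of $\rho_k$ as the one nontrivial step. First I would dispose of the purely algebraic identities. The relation $\rho_k = \theta_{k+1}\rho_{k+1}$ is immediate from the update $\rho_{k+1} := \rho_k/\theta_{k+1}$ in \eqref{eq:scvx_ergodic_param_update}, and $\eta_k = \theta_{k+1}\eta_{k+1}$ follows at once by writing $\eta_j = \tfrac12\rho_j$ for every $j \geq 0$ and reusing the $\rho$-identity. For $L_k + \mu_h = (L_{k+1}-\mu_f)/\theta_{k+1}$ I would substitute $L_{k+1} = L_k/\theta_{k+1}$ and reduce the claim to the assertion that $\theta_{k+1}$ is the positive root of $(L_k+\mu_h)\theta^2 + \mu_f\theta - L_k = 0$. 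The cleanest confirmation is to rationalize the defining expression: multiplying numerator and denominator of $\tfrac{2L_k}{\mu_f + \sqrt{\mu_f^2 + 4L_k(L_k+\mu_h)}}$ by $\big(\!-\mu_f + \sqrt{\mu_f^2 + 4L_k(L_k+\mu_h)}\big)$ turns it into $\tfrac{-\mu_f + \sqrt{\mu_f^2 + 4L_k(L_k+\mu_h)}}{2(L_k+\mu_h)}$, which is exactly that root.

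Next I would settle the two bounds that come from monotonicity. For $0 < \theta_{k+1} \leq 1$, positivity is clear since $L_k > 0$, and the upper bound reduces to $2L_k - \mu_f \leq \sqrt{\mu_f^2 + 4L_k(L_k+\mu_h)}$; this is trivial when the left side is nonpositive, and otherwise follows by squaring and cancelling to $-\mu_f \leq \mu_h$, which holds because $\mu_f,\mu_h \geq 0$. For $L_k \geq L_f + \rho_k(C + 2M_g^2)$ I would use induction: equality holds at $k=0$ by the definition of $L_0$ in \eqref{eq:scvx_ergodic_param_init}, and for the step I divide the hypothesis by $\theta_{k+1}$, use $L_{k+1} = L_k/\theta_{k+1}$ and $\rho_{k+1} = \rho_k/\theta_{k+1}$, and invoke $L_f/\theta_{k+1} \geq L_f$ (from $0 < \theta_{k+1} \leq 1$) to absorb the extra smoothness term.

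The part I expect to carry the weight is the linear lower bound $\rho_k \geq \rho_0 + P_0 k$. The starting observation is that the shared factor $\theta_{k+1}$ in the $\rho$- and $L$-updates makes their ratio invariant, $\rho_k/L_k = \rho_0/L_0$ for all $k$. Substituting $L_j = (L_0/\rho_0)\rho_j$ and $\theta_{k+1} = \rho_k/\rho_{k+1}$ into the identity $L_{k+1} - \mu_f = (L_k+\mu_h)\theta_{k+1}$ from the first paragraph and clearing denominators yields the clean recursion $\tfrac{L_0}{\rho_0}\big(\rho_{k+1}^2 - \rho_k^2\big) = \mu_f\rho_{k+1} + \mu_h\rho_k$, i.e. $\rho_{k+1} - \rho_k = \tfrac{\rho_0}{L_0}\cdot\tfrac{\mu_f\rho_{k+1} + \mu_h\rho_k}{\rho_{k+1} + \rho_k}$. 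A short computation, rationalizing as in the first paragraph, shows the right-hand side at $k=0$ equals exactly $P_0$. It would then suffice to prove each increment is at least $P_0$ and telescope.

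This increment estimate is where I expect the genuine difficulty. The factor $\tfrac{\mu_f\rho_{k+1} + \mu_h\rho_k}{\rho_{k+1} + \rho_k}$ is a $\rho$-weighted average of $\mu_f$ and $\mu_h$ whose weight on $\mu_f$ is $\tfrac{1}{1+\theta_{k+1}}$; since $\theta_{k+1}$ increases toward $1$, this weight drifts toward $\tfrac12$, so the direction in which the average moves depends delicately on the ordering of $\mu_f$ and $\mu_h$. Pinning down that the increment never drops below its initial value $P_0$ therefore requires first establishing that $\theta_{k+1}$ is monotone in $k$ and then controlling the weighted average accordingly; I anticipate this monotone increment bound, rather than any of the preceding identities, to be the crux of the argument.
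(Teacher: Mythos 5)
Your treatment of five of the six relations is correct and essentially identical to the paper's proof: the paper likewise reads $\rho_k=\theta_{k+1}\rho_{k+1}$ and $\eta_k=\theta_{k+1}\eta_{k+1}$ off the update rule, obtains $L_k+\mu_h=\frac{L_{k+1}-\mu_f}{\theta_{k+1}}$ from the quadratic $\theta_{k+1}^2(L_k+\mu_h)+\theta_{k+1}\mu_f=L_k$ (your rationalization verifies exactly this identity), and proves $L_k\geq L_f+\rho_k(C+2M_g^2)$ by the same induction you describe. The genuine gap is the one you flag yourself: you never prove that each increment $\rho_{k+1}-\rho_k=\frac{\rho_0}{L_0}\cdot\frac{\mu_f+\mu_h\theta_{k+1}}{1+\theta_{k+1}}$ is at least $P_0$, so the bound $\rho_k\geq\rho_0+P_0k$ is left unestablished. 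The paper closes this step in one line: it writes $\rho_{k+1}-\rho_k=\frac{\rho_0}{2L_0}\psi(L_k)$ with $\psi(L):=\sqrt{(2L-\mu_f)^2+4L(\mu_f+\mu_h)}-(2L-\mu_f)$ and asserts $\psi(L_k)\geq\psi(L_0)$ ``since $L_k\geq L_0$.''

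Your suspicion about the delicacy of that step is, however, well founded, and it applies to the paper's argument as well. A direct computation gives $\psi'(L)\geq 0$ iff $\mu_h\geq\mu_f$; equivalently, in your formulation, the weighted average $\frac{\mu_f+\mu_h\theta}{1+\theta}$ has derivative $\frac{\mu_h-\mu_f}{(1+\theta)^2}$ in $\theta$, while $\theta_{k+1}$ is nondecreasing in $k$ (because $L_k$ is and $\theta$ is an increasing function of $L$). So the monotonicity the paper invokes holds only when $\mu_h\geq\mu_f$. When $\mu_f>\mu_h$ the increments strictly decrease from their initial value $P_0$, and the stated bound already fails at $k=2$: taking $\mu_f=1$, $\mu_h=0$, $L_0=\rho_0=1$ gives $\rho_1=\rho_0+P_0\approx 1.618$ but $\rho_2\approx 2.194<2.236\approx\rho_0+2P_0$. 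The conclusion that Theorem~\ref{thm:alg2_ergodic} actually needs can still be salvaged: since $\theta_{k+1}\in[\theta_1,1]$ and the average is monotone on that interval, every increment is at least $\min\bigl\{P_0,\ \tfrac{\rho_0(\mu_f+\mu_h)}{2L_0}\bigr\}>0$, so $\rho_k$ grows linearly and the $\BigO{1/k^2}$ rate survives with an adjusted constant. In short, your proposal is incomplete as written, but the step you could not complete is not a routine omission: as stated it requires the extra hypothesis $\mu_h\geq\mu_f$, and otherwise the constant $P_0$ in the lemma must be replaced.
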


\begin{proof}
The assertion $0 < \theta_{k+1} \leq 1$ obviously follows from the update of $\theta_{k+1}$ in \eqref{eq:scvx_ergodic_param_update}.
The last two equations of \eqref{eq:ergodic_O2_params_props} directly follow from the update of $\rho_k$ and $\eta_k$ in \eqref{eq:scvx_ergodic_param_update}.

Now, we prove  $L_k \geq L_f + \rho_k(C + 2M_g^2)$ of \eqref{eq:ergodic_O2_params_props} by induction.
First, it holds with equality for $k = 0$ due to the choice of $L_0$ in \eqref{eq:scvx_ergodic_param_init}. 
Furthermore, if it holds for all $k \in \sets{0, 1, \cdots, K}$ with some $K\geq 0$, then we can show that it holds for $K+1$.
Indeed, we have
\myeqn{
L_{K+1} \stackrel{\eqref{eq:scvx_ergodic_param_update}}= \frac{L_K}{\theta_{K+1}} \geq \frac{L_f}{\theta_{K+1}} + \frac{\rho_K}{\theta_{K+1}}(C + 2M_g^2) \geq L_f + \rho_{K+1}(C + 2M_g^2),
}
where the first inequality holds due to the induction assumption, while the second holds due to $0 < \theta_{K+1} \leq 1$ and $\rho_K =  \theta_{K + 1} \rho_{K + 1}$ (already proved).
Thus the first inequality of \eqref{eq:ergodic_O2_params_props} is also true for $K + 1$. 
By induction, the first inequality in \eqref{eq:ergodic_O2_params_props}  holds for all $k\geq 0$.

Next, to prove $\rho_k \geq \rho_0 + P_0 k$ in \eqref{eq:ergodic_O2_params_props} we notice that \eqref{eq:scvx_ergodic_param_update} implies
\myeqn{
\arraycolsep=0em
\begin{array}{lcl}
\rho_{k+1} & \stackrel{\eqref{eq:scvx_ergodic_param_update}}= & \rho_k\left(\frac{\mu_f + \sqrt{\mu_f^2 + 4L_k(L_k+\mu_h)}}{2L_k}\right) = \rho_k + \frac{\rho_k}{2L_k}\left(\mu_f + \sqrt{\mu_f^2 + 4L_k(L_k+\mu_h)} - 2L_k\right) \vspace{1ex}\\
&\geq & \rho_k + \frac{\rho_0}{2L_0}\big[\sqrt{(2L_0 - \mu_f)^2 + 4L_0(\mu_f + \mu_h)} - (2L_0 - \mu_f)\big]  = \rho_k + P_0,
\end{array}
}
where we have used $\frac{\rho_k}{L_k} = \frac{\rho_0}{L_0}$ from \eqref{eq:scvx_ergodic_param_update} and $L_k \geq L_0$.
By induction, we obtain $\rho_k \geq \rho_0 + P_0k$.

Finally, to prove the first statement in the second line of \eqref{eq:ergodic_O2_params_props}, from the update rule of $\theta_{k+1}$, we have $\theta_{k+1}^2(L_k + \mu_h) + \theta_{k+1}\mu_f = L_k$, which is equivalent to $L_k + \mu_h = \frac{1}{\theta_{k+1}}\big[\frac{L_k}{\theta_{k+1}} - \mu_f] = \frac{L_{k+1} - \mu_f}{\theta_{k+1}}$, where we have used $L_{k+1} = \frac{L_k}{\theta_{k+1}}$.
Hence, this assertion is proved.
\end{proof}

\begin{lemma}\label{le:bound_of_Lk_scvx}
Let $\rho_k$, $L_k$, and $\eta_k$ be updated by \eqref{eq:scvx_ergodic_param_init} and \eqref{eq:scvx_ergodic_param_update}.
Then, for $k \geq 0$, we have
\myeq{eq:ergodic_Tbd_str}{
	L_g \big[ \norms{y^{\star}} + \norms{\tilde{y}^k - y^{\star}} + \rho_k M_g\norms{x^k - x^{\star}} \big] \leq \rho_kC.
}
\end{lemma}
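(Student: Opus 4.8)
The plan is to mirror the induction proof of Lemma~\ref{le:bound_of_Lk}, but now account for the time-varying parameters $\rho_k$, $L_k$, $\eta_k$ produced by \eqref{eq:scvx_ergodic_param_init}--\eqref{eq:scvx_ergodic_param_update}. Since $C$, $L_0$, $\rho_0 = 1$, and $\eta_0 = \rho_0/2$ coincide with the choices in \eqref{eq:non_erg_T_def2}, the scalar condition \eqref{eq:para_cond1}, namely $L_g\big[\norms{y^\star} + (\sqrt{\eta_0} + \rho_0 M_g/\sqrt{L_0})\Rc_0\big] \leq \rho_0 C$ with $\Rc_0^2 := L_0\norms{x^0 - x^\star}^2 + \tfrac{1}{\eta_0}\norms{y^0 - y^\star}^2$, still holds verbatim, and it immediately settles the base case $k = 0$ of \eqref{eq:ergodic_Tbd_str} exactly as in the proof of Lemma~\ref{le:bound_of_Lk}.

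For the inductive step I would assume \eqref{eq:ergodic_Tbd_str} for $k = 0, \ldots, K$. As in \eqref{eq:ergodic_Lk_bd}, the nonexpansiveness of $\prox_{\rho_k H^*}$ together with the induction hypothesis gives $\Lb_g^k \leq \rho_k C$ for each such $k$; combined with $L_k \geq L_f + \rho_k(C + 2M_g^2)$ from \eqref{eq:ergodic_O2_params_props} and $\eta_k = \rho_k/2$, this yields $L_k - \Lb_g^k - L_f - \tfrac{\rho_k^2 M_g^2}{\rho_k - \eta_k} \geq 0$. I then specialize Lemma~\ref{lm:descent_property1} with $\tau_k = 1$ and $\hat{x}^k = x^k$ (since $\beta_k = 0$) at $(x, s, y) = (x^\star, s^\star, y^\star)$; dropping the resulting nonnegative $\norms{x^{k+1} - \hat{x}^k}^2$-term and invoking $\Lc_{\rho_k}(x^{k+1}, s^{k+1}, y^\star) - \Lc(x^\star, s^\star, y^{k+1}) \geq 0$ from \eqref{eq:saddle_pt_s} (using $\breve{y}^{k+1} = y^{k+1}$ when $\tau_k = 1$), I obtain the one-step contraction $(L_k + \mu_h)\norms{x^{k+1} - x^\star}^2 + \tfrac{1}{\eta_k}\norms{\tilde{y}^{k+1} - y^\star}^2 \leq V_k$, where $V_k := (L_k - \mu_f)\norms{x^k - x^\star}^2 + \tfrac{1}{\eta_k}\norms{\tilde{y}^k - y^\star}^2$.

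Turning this contraction into a bound is the crux. Using $L_k + \mu_h = (L_{k+1} - \mu_f)/\theta_{k+1}$, $\eta_k = \theta_{k+1}\eta_{k+1}$, and $0 < \theta_{k+1} \leq 1$ from \eqref{eq:ergodic_O2_params_props}, the left-hand side dominates $V_{k+1}$, so $\sets{V_k}$ is non-increasing and $V_K \leq V_0 \leq \Rc_0^2$. Feeding $V_K \leq \Rc_0^2$ back into the contraction at step $K$ gives $\norms{x^{K+1} - x^\star} \leq \Rc_0/\sqrt{L_K + \mu_h}$ and $\norms{\tilde{y}^{K+1} - y^\star} \leq \sqrt{\eta_K}\,\Rc_0$. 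I would then substitute these into \eqref{eq:ergodic_Tbd_str} at $K+1$ and compare term-by-term with $\rho_{K+1}$ times \eqref{eq:para_cond1}, using $\rho_{K+1} \geq \rho_0 = 1$, $\eta_K = \rho_K/2$ with $\eta_0 = 1/2$, and $L_K + \mu_h \geq L_0$ (from $L_K \geq L_0$), after which the $\rho_{K+1}$ factors cancel and each of the three terms is dominated.

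I expect the main obstacle to be precisely this last bookkeeping. The naive route---bounding $\norms{x^{K+1} - x^\star}$ through the monotone quantity $V_{K+1}$, whose $x$-weight is $L_{K+1} - \mu_f$---is too lossy and forces the spurious requirement $L_{K+1} - \mu_f \geq L_0$, which can fail when $\mu_f > \mu_h$. The fix is to read the bound on $\norms{x^{K+1} - x^\star}$ directly off the one-step contraction, whose $x$-weight is the larger quantity $L_K + \mu_h \geq L_0$; this is what makes the critical coefficient $\rho_{K+1}M_g/\sqrt{L_K + \mu_h}$ collapse against $\rho_{K+1}\rho_0 M_g/\sqrt{L_0}$ once the $\rho_{K+1}$ factors cancel. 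Care is also needed to confirm $\eta_k = \rho_k/2$ for every $k$ (so that $\rho_k - \eta_k = \rho_k/2$ in the descent coefficient), which follows from $\eta_0 = \rho_0/2$ and the recursion $\eta_{k+1} = \tfrac12\rho_{k+1}$.
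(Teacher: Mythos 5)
Your proposal is correct and follows essentially the same route as the paper's proof: the same induction, the same bound $\Lb_g^K \leq \rho_K C$ via nonexpansiveness of $\prox_{\rho_K H^{*}}$, the same specialization of Lemma~\ref{lm:descent_property1} with $\tau_k = 1$ at $(x^{\star}, s^{\star}, y^{\star})$, and the same telescoping of the weighted Lyapunov quantity using $L_k + \mu_h = (L_{k+1}-\mu_f)/\theta_{k+1}$ and $\eta_k = \theta_{k+1}\eta_{k+1}$. The only divergence is the final bookkeeping you flag yourself: you read $\norms{x^{K+1}-x^{\star}} \leq \Rc_0/\sqrt{L_K+\mu_h}$ off the one-step contraction and close against \eqref{eq:para_cond1} with $\sqrt{L_0}$, whereas the paper reads the looser bound $\Rc_0/\sqrt{L_{K+1}-\mu_f}$ off the telescoped quantity and closes against the slightly strengthened initial condition \eqref{eq:le:bound_of_Lk_scvx_proof0} with $\sqrt{L_0-\mu_f}$; both close the induction.
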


\begin{proof}
As shown in Lemma~\ref{le:bound_of_Lk}, if $\rho_0 := 1$ and we choose $C$ as in \eqref{eq:scvx_ergodic_param_init}, then we have
\myeq{eq:le:bound_of_Lk_scvx_proof0}{
L_g \big[\norms{y^{\star}} + \big(\sqrt{\eta_0} + \tfrac{\rho_0 M_g}{\sqrt{L_0 - \mu_f}}\big)\Rc^2_0(x^{\star}, y^{\star})\big] \leq \rho_0 C,
}
where $\Rc_0^2(x, y) := L_0\norms{x^0 - x}^2 + \frac{1}{\eta_0}\norms{y^0 - y}^2$.
We prove \eqref{eq:ergodic_Tbd_str} by induction. 
For $k = 0$, the inequality \eqref{eq:ergodic_Tbd_str} holds due to \eqref{eq:le:bound_of_Lk_scvx_proof0} and $L_0 > \mu_f$.
Suppose that \eqref{eq:ergodic_Tbd_str} holds for all $k \in \sets{0, 1, \cdots, K}$ with some $K \geq 0$.
Then, similar to  \eqref{eq:ergodic_Lk_bd}, for all $k\in \sets{0,1,\cdots, K}$, we have
\myeq{eq:ergodic_O2_Lk}{
	\Lb_g^k  \overset{\tiny\eqref{eq:ergodic_Lk_bd}}{\leq} L_g \left[ \norms{y^{\star}} + \norms{\tilde{y}^k - y^{\star}} + \rho_k M_g \norms{x^k - x^{\star}}\right] \leq \rho_k C.
}
By the update of $L_k$, $\rho_k$, and $\eta_k$ in \eqref{eq:scvx_ergodic_param_update}, the first inequality of \eqref{eq:ergodic_O2_params_props}, and \eqref{eq:ergodic_O2_Lk}, we have
\myeq{eq:ergodic_str_Lk_term}{
\arraycolsep=0.1em
\begin{array}{lcl}
	L_k - \Lb_g^k - L_f - \frac{\rho_k^2 M_g^2}{\rho_k - \eta_k} &\overset{\eqref{eq:scvx_ergodic_param_update},\eqref{eq:ergodic_O2_params_props}}{\geq} & 
	L_f + \rho_k(C + 2M_g^2) - L_f - \Lb_g^k - 2\rho_kM_g^2   \stackrel{\eqref{eq:ergodic_O2_Lk}}\geq  0.
\end{array}
}
Using this inequality,  $\tau_k := 1$, and $(x, s, y) := (x^{\star}, s^{\star}, y^{\star})$ into \eqref{eq:p2_keylemma} of Lemma~\ref{lm:descent_property1}, and noting that $\hat{x}^k = x^k$ (since $\beta_k = 0$) and $L_k - \mu_f > 0$ yields
\myeq{eq:ergodic_O2_telescope}{
\hspace{-1ex}
\arraycolsep=0.2em
\begin{array}{lcl}
	0 & \leq & \Lc_{\rho_k} (x^{k + 1}, s^{k + 1}, y^{\star}) - \Pc^{\star} \vspace{1ex}\\
	& \stackrel{\eqref{eq:ergodic_str_Lk_term}}\leq & \frac{(L_k - \mu_f)}{2} \norms{x^k - x^{\star}}^2 - \frac{(L_k + \mu_h)}{2} \norms{x^{k + 1} - x^{\star}}^2 + \frac{1}{2\eta_k} \left[ \norms{\tilde{y}^k - y^{\star}}^2 - \norms{\tilde{y}^{k + 1} - y^{\star}}^2 \right] \vspace{1ex} \\
	& \stackrel{\eqref{eq:ergodic_O2_params_props}}= & \left[ \frac{(L_k-\mu_f)}{2} \norms{x^k - x^{\star}}^2 + \frac{1}{2\eta_k} \norms{\tilde{y}^k - y^{\star}}^2\right] \vspace{1ex}\\
	&& - {~}  {~} \frac{1}{\theta_{k + 1}} \left[ \frac{(L_{k+1}-\mu_f)}{2} \norms{x^{k + 1} - x^{\star}}^2 + \frac{1}{2\eta_{k + 1}} \norms{\tilde{y}^{k + 1} - y^{\star}}^2\right].
\end{array}
\hspace{-2ex}
}
Multiplying \eqref{eq:ergodic_O2_telescope} by $2\rho_k$, and noticing that $\rho_{k+1} = \frac{\rho_k}{\theta_{k + 1}}$, we get
\myeqn{
\hspace{-1ex}
\arraycolsep=0.1em
\begin{array}{ll}
	\rho_{k+1} \big[(L_{k+1} \! - \mu_f) \norms{x^{k + 1} {\!\!\!} - x^{\star}}^2 + \frac{1}{\eta_{k + 1}} \norms{\tilde{y}^{k + 1} {\!\!\!} - y^{\star}}^2\big]  \leq  \rho_{k}[(L_{k} - \mu_f) \norms{x^{k} - x^{\star}}^2  + \frac{1}{\eta_{k}} \norms{\tilde{y}^{k } - y^{\star}}^2].
\end{array}
\hspace{-2ex}
}
By induction, the above inequality holds for $k \in \sets{0, 1, \cdots, K}$.
Consequently, one has
\myeqn{
\hspace{-0.5ex}
\arraycolsep=0.1em
\begin{array}{lcl}
	(L_{K + 1} -\mu_f)\norms{x^{K + 1} {\!\!} - x^{\star}}^2 + \tfrac{1}{\eta_{K + 1}} \norms{\tilde{y}^{K + 1} {\!\!} - y^{\star}}^2 
	& \leq & \frac{\rho_0}{\rho_{K+1}}\left[ (L_0-\mu_f)\norms{x^0 {\!} -  x^{\star}}^2 + \tfrac{1}{\eta_0} \norms{y^0 {\!} - y^{\star}}^2 \right] \vspace{1ex}\\
	& \leq & \Rc_0^2 (x^{\star}, y^{\star}) \quad \text{(since $\frac{\rho_0}{\rho_{K+1}} \leq 1$)},
\end{array}	
\hspace{-2ex}	
\vspace{-1ex}
}
which implies that $\norms{x^{K + 1} - x^{\star}} \leq \frac{\Rc_0(x^{\star}, y^{\star})}{\sqrt{L_{K+1}-\mu_f}}$ and $\norms{\tilde{y}^{K + 1} - y^{\star}} \leq \sqrt{\eta_{K + 1}} \Rc_0(x^{\star}, y^{\star})$. 

\noindent
Finally, using the above estimates, we can easily deduce
\myeqn{
\arraycolsep=0.1em
\begin{array}{ll}
& \frac{1}{\rho_{K + 1}} \big[ \norms{y^{\star}} + \norms{\tilde{y}^{K + 1} {\!\!} - y^{\star}} + \rho_{K + 1} M_g \norms{x^{K + 1} {\!\!} - x^{\star}} \big] \leq  \frac{\norms{y^{\star}}}{\rho_{K + 1}}  + \Big[ \frac{\sqrt{\eta_{K+1}}}{\rho_{K + 1}} + \frac{M_g}{\sqrt{L_{K+1} - \mu_f}} \Big] \Rc_0(x^{\star}, y^{\star})\\
&	\stackrel{\eqref{eq:scvx_ergodic_param_update}}\leq  \frac1{\rho_0} \norms{y^{\star}} + \Big[ \frac{\sqrt{\eta_0}}{\rho_0} + \frac{M_g}{\sqrt{L_0 - \mu_f}} \Big] \Rc_0(x^{\star}, y^{\star}) \overset{\tiny\eqref{eq:le:bound_of_Lk_scvx_proof0}}{\leq} C/L_g.
\end{array}
\vspace{-0.5ex}
}
This inequality shows that \eqref{eq:ergodic_Tbd_str} also holds for $K + 1$. 
By induction, we have thus proved that \eqref{eq:ergodic_Tbd_str} holds for all $k \geq 0$.
\end{proof}

\subsection{The proof of Theorem~\ref{thm:BigO_ergodic_rate}}
First, we use \eqref{eq:p2_keylemma} of Lemma~\ref{lm:descent_property1} and \eqref{eq:non_erg_T_bd} of Lemma \ref{le:bound_of_Lk}, and follow the same lines of proof as \eqref{eq:ergodic_Lk_bd} and \eqref{eq:non_erg_beta_term} to show that $L - L_f - \Lb_g^k - \tfrac{{\rho^2} M_g^2}{\rho - \eta} \geq 0$.
Therefore, similar to \eqref{eq:ergodic_induction}, for any $y \in \dom{H^{*}}$ and any $j \geq 0$, we have
\myeqn{
\hspace{-0.5ex}
	\Lc(x^{j + 1}, s^{j + 1}, y) - \Lc(x, s, y^{j + 1}) \leq  \tfrac{L}{2} \left[ \norms{x^j - x}^2 - \norms{x^{j + 1} {\!\!} - x}^2 \right] +  \tfrac{1}{2\eta} \left[ \norms{\tilde{y}^j - y}^2 - \norms{\tilde{y}^{j + 1}  {\!\!} - y}^2 \right].
\hspace{-1ex}
}
Summing up this inequality from $j := 0$ to $j := k - 1$, we get
\myeqn{
\vspace{-1ex}
	\sum_{j = 0}^{k - 1} \left[\Lc(x^{j + 1}, s^{j + 1}, y) -  \Lc(x, s, y^{j + 1})\right] \leq \frac{1}{2} \left[ L \norms{x^0 - x}^2 + \frac1\eta \norms{y^0 - y}^2\right] = \frac{\Rc_0^2 (x,y)}2.
\vspace{-1ex}
}
Dividing the above inequality by $k \geq 1$, and using the convexity of $\Lc$ in $x$ and $s$, and its concavity in $y$, the definition of $\bar{x}^k$ and $\bar{y}^k$, and $\bar{s}^k := \tfrac1k \sum_{j = 1}^{k} s^j$, we get
\vspace{-1ex}
\myeq{eq:ergodic_thm_L}{
	\Lc(\bar{x}^k, \bar{s}^k, y) - \Lc(x, s, \bar{y}^k)  \leq  \frac{1}{k}\sum_{j = 1}^k [\Lc(x^j, s^j, y) - \Lc(x, s, y^j)] \leq \frac{\Rc_0^2 (x,y)}{2k}.
\vspace{-1ex}	
}
By \eqref{eq:L_to_Ltilde}, we have $\widetilde{\Lc}(\bar{x}^k, y) \leq \Lc(\bar{x}^k, \bar{s}^k, y)$ and $\widetilde{\Lc}(x, \bar{y}^k) = \Lc(x, \breve{s}^k, \bar{y}^k)$ for $\breve{s}^k\in \partial{H}^{*}(\bar{y}^k)$.
Hence, $\widetilde{\Lc}(\bar{x}^k, y) - \widetilde{\Lc}(x, \bar{y}^k) \leq \Lc(\bar{x}^k, \bar{s}^k, y) - \Lc(x, \breve{s}^k, \bar{y}^k)$.
Substituting $s := \breve{s}^k$ and this inequality into \eqref{eq:ergodic_thm_L}, we obtain $\widetilde{\Lc}(\bar{x}^k, y) - \widetilde{\Lc}(x, \bar{y}^k)  \leq \tfrac{\Rc_0^2 (x,y)}{2k}$.
Taking the supremum on both sides of this estimate over $\Xc \times \Yc$ and using $\Gc_{\Xc \times \Yc}$ from \eqref{eq:gap_func}, we prove the third assertion of \eqref{eq:gap_convergence_ergodic}.

Next, if $H$ is $M_H$-Lipschitz continuous, then we let $\breve{y}^k := \tfrac{M_H}{\norms{g(\bar{x}^k) - \bar{s}^k}} [g(\bar{x}^k) - \bar{s}^k]$, 
and substitute $(x, s, y) := (x^{\star}, s^{\star}, \breve{y}^k)$ in \eqref{eq:ergodic_thm_L} to get
\myeqn{
\arraycolsep=0.1em
\begin{array}{lcl}
	\Pc(\bar{x}^k) - \Pc^{\star} & \overset{\tiny\eqref{eq:primal_cvx}}{=} & F(\bar{x}^k) + H(g(\bar{x}^k)) - \Pc^{\star} \leq F(\bar{x}^k) + H(\bar{s}^k) + |H(g(\bar{x}^k)) - H(\bar{s}^k)| - \Pc^{\star} \vspace{1ex}\\
	& \leq & F(\bar{x}^k) + H(\bar{s}^k) + M_H\norms{ g(\bar{x}^k) - \bar{s}^k} - \Pc^{\star}\\
	& = & F(\bar{x}^k) + H(\bar{s}^k) + \iprods{\breve{y}^k, g(\bar{x}^k) - \bar{s}^k} - \Pc^{\star} \leq \Lc(\bar{x}^k, \bar{s}^k, \breve{y}^k) - \Pc^{\star} \stackrel{\eqref{eq:ergodic_thm_L}}\leq \frac{\Rc_0^2 (x^{\star}, \breve{y}^k)}{2k}.
\end{array}
}
Using $\norms{y^0 - \breve{y}^k}^2 \leq (\norms{y^0} + \norms{\breve{y}^k})^2 = {(\norms{y^0} + M_H)}^2$ to upper bound $\Rc_0^2 (x^{\star}, \breve{y}^k)$ in the last estimate, we obtain the first assertion of \eqref{eq:gap_convergence_ergodic}.

Let $\breve{x}^k$ satisfy $0 \in  g'(\breve{x}^k)^\top \bar{y}^k + \partial F(\breve{x}^k)$, then by the form of \eqref{eq:dual_cvx}, we have $\Dc(\bar{y}^k) = \widetilde{\Lc}(\breve{x}^k, \bar{y}^k) = \Lc(\breve{x}^k,\breve{s}^k,\bar{y}^k)$ for $\breve{s}^k \in \partial H^{*} (\bar{y}^k)$.
Moreover, notice that $-\Dc^{\star} =  \Lc(x^{\star}, s^{\star}, y^{\star}) \leq \Lc(\bar{x}^k,\bar{s}^k, y^{\star})$ by \eqref{eq:saddle_pt_s}.
Therefore, substituting $(x, s, y) := (\breve{x}^k, \breve{s}^k, y^{\star})$ into \eqref{eq:ergodic_thm_L}, we can derive
\myeqn{
	\Dc(\bar{y}^k) - \Dc^{\star}  \leq \Lc(\bar{x}^k, \bar{s}^k, y^{\star}) - \Lc(\breve{x}^k, \breve{s}^k, \bar{y}^k) \stackrel{\eqref{eq:ergodic_thm_L}}\leq \frac{\Rc_0^2 (\breve{x}^k, y^{\star})}{2k}.
}
Since $0 \in  g'(\breve{x}^k)^\top \bar{y}^k + \partial F(\breve{x}^k)$, we have $\breve{x}^k \in \partial{F}^{*}(-g^{\prime} (\breve{x}^k)^\top \bar{y}^k)$.
If $F^{*}$ is $M_{F^{*}}$-Lipschitz continuous, then $\norms{\breve{x}^k} = \norms{\nabla{F}^{*}(-g^{\prime} (\breve{x}^k)^\top \bar{y}^k)} \leq M_{F^{*}}$, thus $\norms{x^0 - \breve{x}^k}^2 \leq {(\norms{x^0} + M_{F^{*}})}^2$. 
Substituting this into $\Rc_0^2 (\breve{x}^k, y^{\star})$ of the last inequality, we get the second line of \eqref{eq:gap_convergence_ergodic}.
\Eproof

\subsection{The proof of Theorem \ref{thm:alg2_ergodic}}\label{subsec:alg2_ergodic_proof}
By Lemma \ref{le:bound_of_Lk_scvx}, \eqref{eq:ergodic_Tbd_str} holds for all $k \geq 0$. 
Thus using the same lines of proof from \eqref{eq:ergodic_O2_Lk} to \eqref{eq:ergodic_O2_telescope}, we have for all $j \geq 0$ and any $(x, s, y) \in \dom{F}\times\dom{H}\times\dom{H^{*}}$ that
\myeqn{
\arraycolsep=0.1em
\begin{array}{rl}
	&\Lc(x^{j + 1}, s^{j + 1}, y)  -  \Lc(x, s, y^{j + 1}) \stackrel{\tau_j = 1}= \Lc(x^{j + 1}, s^{j + 1}, y) - \Lc(x, s, \breve{y}^{j + 1})\vspace{1ex}\\
	&\ \ \leq \left[ \frac{(L_j-\mu_f)}{2} \norms{ x^j - x}^2 + \frac{1}{2\eta_j} \norms{\tilde{y}^j - y}^2\right] - \frac{1}{\theta_{j + 1}} \left[ \frac{(L_{j+1}-\mu_f)}{2} \norms{x^{j + 1} {\!\!} - x}^2 + \frac{1}{2\eta_{j + 1}} \norms{\tilde{y}^{j + 1} {\!\!} -  y}^2\right].
\end{array}
}
Multiplying the last inequality by $2\rho_j$ and noticing that $\tfrac{\rho_j}{\theta_{j + 1}} = \rho_{j + 1}$, it leads to
\myeqn{
\hspace{-1ex}
\arraycolsep=0.2em
\begin{array}{lcl}
	2\rho_j [\Lc(x^{j + 1}, s^{j + 1}, y) - \Lc(x, s, y^{j + 1})] & \leq & \rho_j \left[ (L_j-\mu_f) \norms{ x^j - x}^2 + \frac{1}{\eta_j} \norms{\tilde{y}^j - y}^2\right]\vspace{1ex}\\
	& & - {~} \rho_{j + 1} \left[ (L_{j + 1} -\mu_f) \norms{ x^{j + 1} {\!\!} - x}^2 + \frac{1}{\eta_{j + 1}} \norms{\tilde{y}^{j + 1} {\!\!} - y}^2\right].
\end{array}
\hspace{-1ex}
}
Summing up this inequality from $j := 0$ to $j := k - 1$, we obtain
\myeqn{
	\sum_{j = 0}^{k - 1} \rho_j [\Lc(x^{j + 1}, s^{j + 1}, y) - \Lc(x, s, y^{j + 1})] \leq  \frac{\rho_0}{2}\left[ L_0 \norms{x^0 - x}^2 + \frac{1}{\eta_0} \norms{y^0 - y}^2\right] = \frac{\rho_0 \Rc_0^2 (x, y)}{2},
}
where $\Rc_0^2 (x, y) := L_0 \norms{x^0 - x}^2 + \frac{2}{\rho_0} \norms{y^0 - y}^2$.
Dividing it by $\sum_{j = 0}^{k - 1} \rho_j$, and using the convexity of $\Lc$ in $x$ and $s$, and the concavity in $y$, and $\sets{(\bar{x}^k, \bar{y}^k)}$ defined by \eqref{eq:ergodic_O2_xbar_sbar} and $\bar{s}^k := \big(\sum_{j = 0}^{k-1} \rho_j\big)^{-1}\sum_{j = 0}^{k-1} \rho_j s^{j+1}$, we get
\myeqn{
\arraycolsep=0.2em
\begin{array}{lcl}
	\Lc(\bar{x}^k, \bar{s}^k, y) - \Lc(x, s, \bar{y}^k) & \overset{\tiny\eqref{eq:ergodic_O2_xbar_sbar}}{\leq} & \frac{1}{\sum_{j = 0}^{k - 1} \rho_j} \sum_{j = 0}^{k - 1} \rho_j \left[\Lc(x^{j + 1}, s^{j + 1}, y) - \Lc(x, s, y^{j + 1})\right]  \leq   \frac{\rho_0 \Rc_0^2 (x, y)}{2\sum_{j = 0}^{k - 1} \rho_j}.
\end{array}
}
By the second inequality in \eqref{eq:ergodic_O2_params_props} of Lemma \ref{le:param_properties}, we have  $\sum_{j=0}^{k-1}\rho_j \geq  k\rho_0 + \frac{1}{2}P_0 k(k - 1)$.
Combining the above two inequalities, we eventually get
\myeqn{
	\Lc(\bar{x}^k, \bar{s}^k, y) - \Lc(x, s, \bar{y}^k) \leq \frac{\Rc_0^2 (x, y)}{2 \rho_0k + P_0k(k-1)}.
}
Therefore, we can use the same arguments as in the proof of Theorem \ref{thm:BigO_ergodic_rate} to prove \eqref{eq:ergodic_O2_result}.
We omit repeating this derivation here.
\Eproof

\section{Semi-Ergodic Convergence}\label{sec:apdx:semi_ergodic_convg}
We now prove Theorems~\ref{thm:BigO_nonegrodic_rate} and \ref{thm:BigO_nonegrodic_rate_str}.
The following lemma will be used to prove Theorem~\ref{thm:BigO_nonegrodic_rate_str}.

\begin{lemma}\label{le:element_est12}
Given $\mu_f \geq 0$, $\mu_h \geq 0$, $L_k \geq \mu_f$, and $\tau_k \in (0, 1)$, let $m_k := \frac{L_k + \mu_h}{L_{k-1} + \mu_h}$ and $\sets{x^k}$ be a given sequence in $\R^p$.
We define $\hat{x}^k := x^k + \beta_k(x^k - x^{k-1})$, where $\beta_k := \frac{(1-\tau_{k-1})\tau_{k-1}}{\tau_{k-1}^2 + m_k\tau_k}$.
Assume that the following two conditions hold:
\myeq{eq:element_cond2}{
\arraycolsep=0.2em
\left\{\begin{array}{llcl}
&(L_{k-1} + \mu_h)(1-\tau_k)\tau_{k-1}^2 + (\mu_f + \mu_h)(1-\tau_k)\tau_k  & \geq & (L_k - \mu_f)\tau_k^2, \vspace{1ex}\\
&(L_{k-1} + \mu_h)(\tau_{k-1}^2 + m_k\tau_k)m_k\tau_k & \geq & (L_k - \mu_f)\tau_{k-1}^2.
\end{array}\right.
}
Then, for any $x \in \R^p$, we have
\myeq{eq:element_est12}{
\hspace{-2ex}
\arraycolsep=0.2em
\begin{array}{ll}
&(L_k - \mu_f)\tau_k^2\norms{\frac{1}{\tau_k}[\hat{x}^k - (1-\tau_k)x^k] - x}^2 -  (\mu_f + \mu_h) \tau_k(1-\tau_k)\norms{x^k - x}^2 \vspace{1ex}\\
&\qquad \leq {~} (1-\tau_k) \left(L_{k-1} + \mu_h\right)\tau_{k-1}^2\norms{\frac{1}{\tau_{k-1}}[x^k - (1-\tau_{k-1})x^{k-1}] - x}^2.
\end{array}
\hspace{-2ex}
}
\end{lemma}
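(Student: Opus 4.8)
The plan is to reduce this vector inequality to the nonnegativity of a scalar quadratic form, and then to show that the two hypotheses in \eqref{eq:element_cond2} are precisely the two factors of the resulting discriminant.

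First I would set $d^k := x^k - x^{k-1}$ and abbreviate $A := (L_k - \mu_f)\tau_k^2$, $B := (\mu_f + \mu_h)\tau_k(1-\tau_k)$, $C := (1-\tau_k)(L_{k-1}+\mu_h)\tau_{k-1}^2$, together with $a := \beta_k/\tau_k$ and $b := (1-\tau_{k-1})/\tau_{k-1}$. Substituting $\hat{x}^k = x^k + \beta_k d^k$ shows $\tfrac{1}{\tau_k}[\hat{x}^k - (1-\tau_k)x^k] = x^k + a\,d^k$ and $\tfrac{1}{\tau_{k-1}}[x^k - (1-\tau_{k-1})x^{k-1}] = x^k + b\,d^k$. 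Writing $z := x^k - x$, a direct expansion recasts \eqref{eq:element_est12} as the claim $\psi(z) \geq 0$ for all $z \in \R^p$, where \[ \psi(z) := (C - A + B)\norms{z}^2 + 2(Cb - Aa)\iprods{z, d^k} + (Cb^2 - Aa^2)\norms{d^k}^2. \]

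Second, since $\psi$ depends on $z$ only through $\norms{z}^2$ and $\iprods{z, d^k}$, minimizing over $z$ (completing the square in the $d^k$-direction) shows that $\psi \geq 0$ on $\R^p$ holds if and only if the symmetric $2\times 2$ Gram form is positive semidefinite, i.e. $C - A + B \geq 0$ together with $(Cb - Aa)^2 \leq (C - A + B)(Cb^2 - Aa^2)$. The first condition is exactly the first line of \eqref{eq:element_cond2}. For the second I would use the elementary identity $(C - A + B)(Cb^2 - Aa^2) - (Cb - Aa)^2 = B(Cb^2 - Aa^2) - AC(a - b)^2$, which turns the requirement into the equivalent form $B(Cb^2 - Aa^2) \geq AC(a - b)^2$.

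Third, the computational heart is to clear the positive denominators (here $S := \tau_{k-1}^2 + m_k\tau_k > 0$ and $1 - \tau_k > 0$, and in the generic case $\tau_{k-1} \neq 1$), and, setting $Q := (L_{k-1}+\mu_h)\tau_{k-1}^2 + (L_k + \mu_h)\tau_k$, reduce $B(Cb^2 - Aa^2) \geq AC(a-b)^2$ to a quadratic inequality in $Q$. The hard part is keeping this expansion organized; the payoff is that it factors exactly as $G_1 G_2 \geq 0$, where $G_1 := (1-\tau_k)Q - (L_k - \mu_f)\tau_k$ and $G_2 := ((L_k + \mu_h)\tau_k - (L_k - \mu_f))Q + (L_k - \mu_f)(L_k + \mu_h)\tau_k$. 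A short rearrangement identifies $G_1 \geq 0$ with the first line of \eqref{eq:element_cond2} and $G_2 \geq 0$ with the second, so the two hypotheses together yield $\psi \geq 0$ and hence \eqref{eq:element_est12}. Finally I would dispose of the degenerate cases $d^k = 0$ and $\tau_{k-1} = 1$ directly, where $\psi$ collapses to $(C - A + B)\norms{z}^2$ and only the first line of \eqref{eq:element_cond2} is needed.
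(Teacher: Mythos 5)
Your proposal is correct, and it reduces the claim to the same object the paper works with: the quadratic form $\psi(z)=c_3\norms{z}^2+2(Cb-Aa)\iprods{z,d^k}+(Cb^2-Aa^2)\norms{d^k}^2$ with $c_3:=C-A+B$ equal to the left-hand side of the first line of \eqref{eq:element_cond2}. Where you diverge is in how nonnegativity of this form is certified. The paper writes the form in the variables $u:=\hat{x}^k-x^k$ and $v:=x^k-x$ with coefficients $c_2,c_1,c_3$ and observes that the specific choice of $\beta_k$ (equivalently $\omega_k=\tfrac{1}{\beta_k}=\tfrac{\tau_{k-1}^2+m_k\tau_k}{(1-\tau_{k-1})\tau_{k-1}}$) forces the algebraic identity $c_1=c_3$; then the first hypothesis gives $c_1=c_3\ge 0$, the second gives $c_2\ge c_1$, and Young's inequality $-2c_1\iprods{u,v}\le c_1\norms{u}^2+c_1\norms{v}^2$ finishes the proof with no determinant computation at all. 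You instead invoke the $2\times 2$ PSD criterion and must factor the discriminant; your claimed factorization is in fact correct --- one can check that $c_1=G_1$, that $c_2-c_1=\tfrac{(1-\tau_k)}{(L_{k-1}+\mu_h)\tau_{k-1}^2}\,G_2$, and hence that the determinant equals $\beta_k^2\,c_3(c_2-c_1)$, a positive multiple of $G_1G_2$ --- but note that this factorization secretly relies on the same identity $c_1=c_3$ that the paper isolates, so the ``computational heart'' you defer is exactly the step the paper's route renders unnecessary. Two small cautions: your ``if and only if'' characterization of $\psi\ge 0$ via the two stated inequalities is not quite a characterization of positive semidefiniteness when $C-A+B=0$ (one also needs $Cb^2-Aa^2\ge 0$ there, which your second hypothesis does supply via $c_2\ge c_1=0$), and since you assert rather than perform the factorization, a complete write-up would still have to carry out that expansion --- at which point the paper's $c_1=c_3$ shortcut is the more economical path to the same conclusion.
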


\begin{proof}
Let $\omega_k := \frac{1}{\beta_k}$.
Then,  we have $\omega_k(\hat{x}^k - x^k) = x^k - x^{k-1}$.
Using this expression, we can easily show that \eqref{eq:element_est12} is equivalent to
\myeq{eq:key_eq_needs_proof}{
-2c_1\iprods{\hat{x}^k - x^k, x^k - x} \leq c_2\norms{\hat{x}^k - x^k}^2 + c_3\norms{x^k - x}^2,
}
where $c_1$, $c_2$, and $c_3$ are given as
\myeqn{
\arraycolsep=0.2em
\left\{\begin{array}{lcl}
c_1 &:= & \left(L_{k-1}  + \mu_h\right)(1-\tau_k)(1-\tau_{k-1})\tau_{k-1}\omega_k - (L_k - \mu_f)\tau_k,  \vspace{1ex}\\
c_2 &:= & \left(L_{k-1} + \mu_h\right)(1-\tau_k)(1-\tau_{k-1})^2\omega_k^2 - L_k + \mu_f,  \vspace{1ex}\\
c_3 &:= & \left(L_{k-1} + \mu_h\right)(1-\tau_k)\tau_{k-1}^2 - (L_k - \mu_f)\tau_k^2 + (\mu_f + \mu_h)(1-\tau_k)\tau_k.
\end{array}\right.
}
If the first condition of \eqref{eq:element_cond2} holds, then $c_3 \geq 0$.
Now, since $\omega_k = \frac{1}{\beta_k} = \frac{\tau_{k-1}^2 + m_k\tau_k}{(1-\tau_{k-1})\tau_{k-1}}$, we can easily show that $c_1 = c_3 \geq 0$.
Similarly, using the second condition of \eqref{eq:element_cond2}, we can also show that $c_2 \geq c_1 \geq 0$.
Hence, the inequality \eqref{eq:key_eq_needs_proof} automatically holds, which proves \eqref{eq:element_est12}.
\end{proof}

\subsection{The proof of Theorem~\ref{thm:BigO_nonegrodic_rate}}\label{subsec:alg1_thm_non_ergodic}
Since $H$ is $M_H$-Lipschitz continuous, one can easily show that $\norms{\prox_{\rho_k H^{*}} \left(\tilde{y}^k + \rho_k g(\hat{x}^k)\right)} \leq M_H$, see, e.g., \cite{TranDinh2015b}.
Hence, we obtain
\myeq{eq:bound_Lk_ratio}{
\hspace{-2ex}
\arraycolsep=0.2em
\begin{array}{lcl}
	\Lb_g^k & =  & \Lb_g (y^{k + 1}) \stackrel{\eqref{eq:in_domH}}\leq L_g \norms{\prox_{\rho_k H^{*}} \left(\tilde{y}^k + \rho_k g(\hat{x}^k)\right)} \leq L_gM_H. 
\end{array}
\hspace{-1ex}
}
Therefore, by the update rule of $L_k$ and $\eta_k$ in \eqref{eq:update_rule}, and \eqref{eq:bound_Lk_ratio}, we can easily show that
\myeqn{
\hspace{-2ex}
\arraycolsep=0.1em
\begin{array}{lcl}
	L_k  - {~} \Lb_g^k - L_f - \frac{\rho_k^2 M_g^2}{\rho_k - \eta_k} &\geq & L_k - L_gM_H - L_f - \frac{M_g^2\rho_k}{\gamma} = 0.
\end{array}
\hspace{-2ex}
}
Furthermore, other conditions in \eqref{eq:update_rule} ensure that
\myeqn{
	\rho_k > \eta_k,\quad \frac{1}{2\eta_k} = \frac{1 - \tau_k}{2\eta_{k - 1}},\quad \rho_{k - 1} - (1 - \tau_k)\rho_k = 0,\quad \text{and}\quad  L_k\tau_k^2  \leq (1 - \tau_k)L_{k - 1}\tau_{k - 1}^2.
}
Utilizing these relations, and denoting $\tilde{x}^k := \frac{1}{\tau_k}[\hat{x}^k - (1-\tau_k)x^k]$ and $\tilde{x}^{k+1} := \frac{1}{\tau_k}[x^{k+1} - (1-\tau_k)x^k]$, we can simplify the estimate \eqref{eq:p2_keylemma} of Lemma~\ref{lm:descent_property1} to get
\myeqn{
\arraycolsep=0.1em
\begin{array}{rl}
	\Lc_{\rho_k}\!& (x^{k + 1}, s^{k + 1}, y) - \Lc(x, s, \breve{y}^{k + 1}) + \frac{\tau_k^2L_k}{2} \norms{\tilde{x}^{k + 1} - x}^2 + \frac{1}{2\eta_k} \norms{\tilde{y}^{k + 1} - y}^2\vspace{1ex}\\
	\leq & (1 - \tau_k)\Big[\Lc_{\rho_{k - 1}} (x^k, s^k, y) - \Lc(x, s, \breve{y}^k) + \frac{\tau_{k - 1}^2L_{k - 1}}{2} \norms{\tilde{x}^k - x}^2 + \frac{1}{2\eta_{k - 1}} \norms{\tilde{y}^k - y}^2 \Big],
\end{array}
}
for $(x, s, y) \in \dom{F}\times\dom{H}\times\dom{H^{*}}$.
By induction, this inequality implies that
\myeqn{
\arraycolsep=0.1em
\begin{array}{rl}
	\Lc(x^k, s^k, y) & - {~} \Lc(x, s, \breve{y}^k) \leq \Lc_{\rho_{k - 1}} (x^k, s^k, y) - \Lc(x, s, \breve{y}^k)\vspace{1ex}\\
	\leq~~~ & \left[\prod_{j = 1}^{k - 1} (1 - \tau_j)\right]\Big[ \Lc_{\rho_0} (x^1, s^1, y^{\star}) - \Lc(x, s, \breve{y}^1) + \frac{L_0\tau_0^2}{2} \norms{\tilde{x}^1 - x}^2 + \frac{1}{2\eta_0} \norms{\tilde{y}^1 - y}^2 \Big]\\
	\stackrel{\eqref{eq:p2_keylemma},\eqref{eq:update_rule}}\leq & \frac{1}{k} \Big[ (1 - \tau_0)\big[ \Lc_{\rho_0} (x^0, s^0, y^{\star}) - \Lc(x, s, y^0) \big] + \frac{L_0\tau_0^2}{2} \norms{\tilde{x}^0 - x}^2 + \frac{1}{2\eta_0} \norms{y^0 - y}^2 \Big] \vspace{1ex}\\
	\stackrel{\tau_0=1}=~ & \frac{1}{2k} \left[ L_0 \norms{x^0 - x}^2 + \frac{1}{(1-\gamma)\rho_0} \norms{y^0 - y}^2\right] = \frac{\Rc_0^2 (x, y)}{2k},
\end{array}
}
where $\Rc_0^2(x,y) := L_0\norms{x^0 - x}^2 + \frac{1}{(1-\gamma)\rho_0}\norms{y^0 - y}^2$.
Taking $\breve{s}^k \in \partial{H^{*}}(y^k)$, and then using the same arguments as in the proof of \eqref{eq:gap_convergence_ergodic}, we can also show that
\myeqn{
	\widetilde{\Lc}(x^k, y) - \widetilde{\Lc}(x, \bar{y}^k) \leq \Lc(x^k, s^k, y) - \Lc(x, \breve{s}^k, \breve{y}^k) \leq \frac{\Rc_0^2 (x, y)}{2 k}.
}
The rest of the proof of Theorem \ref{thm:BigO_nonegrodic_rate} is similar to the lines after \eqref{eq:ergodic_thm_L} in the proof of Theorem \ref{thm:BigO_ergodic_rate}, except that we replace $\bar{x}^k$ there by $x^k$. 
Thus we omit the verbatim here.

Finally, since $\tilde{x}^k := \frac{1}{\tau_k}[\hat{x}^k - (1-\tau_k)x^k]$ and $\tilde{x}^{k+1} := \frac{1}{\tau_k}[x^{k+1} - (1-\tau_k)x^k]$, we obtain $\hat{x}^{k+1} = x^{k+1} + \beta_{k+1}(x^{k+1} - x^k)$ as in \eqref{eq:APD_scheme1}, where $\beta_{k+1} := \frac{(1-\tau_k)\tau_{k+1}}{\tau_k}$.
\Eproof

\subsection{The proof of Theorem \ref{thm:BigO_nonegrodic_rate_str}}\label{subsec:alg2_non_ergodic_proof}
First of all, the update of $\set{\tau_k}$ in \eqref{eq:update_rule_str} leads to
\myeq{eq:tau_str_properties}{
	\tau_k^2 = (1 - \tau_k)\tau_{k - 1}^2\quad \text{and}\quad \frac{1}{k + 1} \leq \tau_k \leq \frac2{k + 2}.
}
Thus $\rho_{k - 1} = \tfrac{\rho_0}{\tau_{k - 1}^2} = \tfrac{(1 - \tau_k)\rho_0}{\tau_k^2} = (1 - \tau_k)\rho_k$, which implies
\myeq{eq:param_str_prop_others}{
	\rho_k > \eta_k,\quad \rho_k \geq \rho_0,\quad \frac{1}{2\eta_k} = \frac{1 - \tau_k}{2\eta_{k - 1}},\quad \text{and}\quad \rho_{k - 1} - (1 - \tau_k)\rho_k = 0.
}
In addition, the condition \eqref{eq:element_cond2} holds if $0 < \rho_0 \leq \frac{\mu_F}{L_gM_H + M_g^2}$.
Moreover, since $\beta_{k+1}$ is updated by \eqref{eq:update_rule_str}, it also satisfies the condition of Lemma~\ref{le:element_est12}, leading to the update $x^{k+1} := x^{k+1} + \beta_{k+1}(x^{k+1} - x^k)$ in \eqref{eq:APD_scheme1}.
Hence, \eqref{eq:element_est12} of Lemma \ref{le:element_est12} holds.

Now, substituting \eqref{eq:element_est12} into \eqref{eq:p2_keylemma} of Lemma~\ref{lm:descent_property1} and using $L_k -  \Lb_g^k - L_f - \frac{\rho_k^2 M_g^2}{\rho_k - \eta_k} \geq 0$ as shown in the proof of Theorem~\ref{thm:BigO_nonegrodic_rate} above, and \eqref{eq:param_str_prop_others}, we get
\myeq{eq:p2_keylemma_scvx2}{
\arraycolsep=0.3em
\begin{array}{rl}
	\Lc_{\rho_k} (x^{k + 1}, s^{k + 1}, y) & - {~} \Lc(x, s, \breve{y}^{k + 1}) \ \leq \ (1 - \tau_k)\big[ \Lc_{\rho_{k - 1}} (x^k, s^k, y) - \Lc(x, s, \breve{y}^k) \big]\vspace{1ex}\\
	& + {~} (1-\tau_k)\frac{\tau_{k-1}^2}{2} \big(L_{k-1} + \mu_h\big) \norms{\frac{1}{\tau_{k-1}}[x^k - (1-\tau_{k-1})x^{k-1}] - x}^2 \vspace{1ex}\\
	& -  {~} \frac{\tau_k^2}{2}\big(L_k + \mu_h\big) \norms{\frac{1}{\tau_k}[x^{k + 1} - (1-\tau_k)x^k] - x}^2 \vspace{1ex}\\
	& + {~} \frac{(1-\tau_k)}{2\eta_{k-1}}\norms{\tilde{y}^k - y}^2 - \frac{1}{2\eta_k}\norms{\tilde{y}^{k + 1} - y}^2.
\end{array}
}
If we define the following potential function
\myeqn{
\begin{array}{lcl}
\Vc_k(x, y) & := & \Lc_{\rho_{k - 1}} (x^k, s^k, y) - \Lc(x, s, \breve{y}^k) + \frac{1}{2\eta_{k-1}}\norms{\tilde{y}^k - y}^2 \vspace{1ex}\\
&& + {~} \frac{\tau_{k-1}^2}{2} \big(L_{k-1} + \mu_h\big) \norms{\frac{1}{\tau_{k-1}}[x^k - (1-\tau_{k-1})x^{k-1}] - x}^2,
\end{array}
}
then \eqref{eq:p2_keylemma_scvx2} is equivalent to $\Vc_{k+1}(x, y) \leq (1-\tau_k)\Vc_k(x, y)$.
By induction, we obtain 
\myeqn{
\begin{array}{lcl}
\Lc(x^{k}, s^{k}, y) - \Lc(x, s, \breve{y}^k) &\leq & \Vc_k(x, y) \leq \Big[ \prod_{i=1}^{k-1}(1-\tau_i)\Big]\Vc_1(x, y) \vspace{1ex}\\
&\leq & \frac{4}{(k+1)^2}\left[ \frac{(L_0-\mu_f)}{2}\norms{x^0 - x}^2 + \frac{1}{2\eta_0}\norms{y^0 - y}^2 \right].
\end{array}
}
where we have used \eqref{eq:p2_keylemma} once again and $\prod_{i=1}^{k-1}(1-\tau_i) = \frac{\tau_{k-1}^2}{\tau_0^2} \leq \frac{4}{(k+1)^2}$ in the last inequality.
Using the last estimate and $0 < L_0 - \mu_f \leq L_0$, we can prove \eqref{eq:BigO_rate_str} in a similar manner as in the proof of Theorem \ref{thm:BigO_nonegrodic_rate}.
We therefore omit the details here.
\Eproof

\section{The Proof of Theorem \ref{thm:conic}}\label{app:conic}
Since $H(\cdot) = \delta_{-\Kc}(\cdot)$, which is not $M_H$-Lipschitz continuous, to prove assertions (c) and (d), we first bound $\Lb^k_g$ of Lemma~\ref{lm:descent_property1} as in \eqref{eq:bound_Lk_ratio1} below to update $L_k$ as in Theorem~\ref{thm:conic}.
Since $\tilde{y}^{k + 1} := \tilde{y}^k + \eta_k [\Theta_{k + 1} - (1 - \tau_k)\Theta_k]$ and $\eta_{k-1} = (1-\tau_k)\eta_k$ due to \eqref{eq:update_rule} or \eqref{eq:update_rule_str}, we have
$\tilde{y}^{k + 1} - \eta_k \Theta_{k + 1} = \tilde{y}^k - (1 - \tau_k)\eta_k \Theta_k  = \tilde{y}^k - \eta_{k - 1} \Theta_k$.
By induction, $\tilde{y}^0 = y^0$, and $\Theta_0 = 0$, for all $k\geq 0$, we obtain
\myeq{eq:thm_O1_non_ergo_induction}{
	\tilde{y}^{k + 1} - \eta_k \Theta_{k + 1} = \tilde{y}^1 - \eta_0 \Theta_1 = \tilde{y}^0 - (1 - \tau_0)\eta_0 \Theta_0 = y^0.
}
By the update of $s^{k + 1}$ in \eqref{eq:scheme_s}, the $B_g$-boundedness of $g$, and $0 \in \Kc$, we have
\myeqn{
	\norms{s^{k + 1}} =   \Vert \proj_{\Kc} \big( \tilde{y}^k / \rho_k + g(\hat{x}^k) \big)\Vert 
\leq  \norms{ \tilde{y}^k / \rho_k + g(\hat{x}^k)} \leq  \norms{\tilde{y}^k} / \rho_k + B_g.
}
Furthermore, by the $\Theta_{k + 1}$-update in \eqref{eq:APD_scheme1} and the connection between $y^{k + 1}$ and $s^{k + 1}$ described by \eqref{eq:keylm_yk_in_domH}, we have
\myeq{eq:thm_O1_non_ergo_s}{
	\norms{\Theta_{k + 1}} = \norms{g(x^{k + 1}) - s^{k + 1}} \leq B_g + \norms{s^{k + 1}} \leq 2B_g +  \norms{\tilde{y}^k} / \rho_k.
}
Let us prove the following estimate \eqref{eq:Lk_bdd_d_z} by induction. 
\myeq{eq:Lk_bdd_d_z}{
	\norms{\tilde{y}^k}  \leq \frac{\rho_k}{\gamma} \Big[\frac{\norms{y^0}}{\rho_0} + 2(1 - \gamma)B_g \Big].
}
For $k = 0$, \eqref{eq:Lk_bdd_d_z} holds since $\gamma \in (0, 1)$ and $\tilde{y}^0 = y^0$. 
Suppose that \eqref{eq:Lk_bdd_d_z} holds for all $k \geq 0$, we prove \eqref{eq:Lk_bdd_d_z} for $k+1$.
Using \eqref{eq:thm_O1_non_ergo_induction}, \eqref{eq:thm_O1_non_ergo_s}, $\rho_{k+1} \geq \rho_0$, and the induction hypothesis, we have
\myeqn{
\arraycolsep=0.1em
\begin{array}{lcl}
	\frac{\norms{\tilde{y}^{k + 1}}}{\rho_{k + 1}} & \stackrel{\eqref{eq:thm_O1_non_ergo_induction}} = & 
	\frac{1}{\rho_{k + 1}} \norms{y^0 + \eta_k \Theta_{k + 1}} \stackrel{\eqref{eq:update_rule}}\leq \frac{\norms{y^0}}{\rho_0} + (1 - \gamma)\norms{\Theta_{k + 1}} 	 \stackrel{\eqref{eq:thm_O1_non_ergo_s}}\leq  \frac{\norms{y^0}}{\rho_0} + (1 - \gamma)\left(2B_g + \frac{\norms{\tilde{y}^k}}{\rho_k}\right)\vspace{1ex}\\
	& \leq & \frac{\norms{y^0}}{\rho_0} + (1 - \gamma)\left(2B_g + \frac1\gamma \left[\frac{\norms{y^0}}{\rho_0} + 2(1 - \gamma)B_g\right]\right) =  \frac1\gamma \left[\frac{\norms{y^0}}{\rho_0} + 2(1 - \gamma)B_g\right],
\end{array}
}
which proves \eqref{eq:Lk_bdd_d_z}.

Now, since $\Kc$ is a cone, we have $\prox_{\rho_kH^{*}}(0) = 0$.
Hence, we can show that
\myeq{eq:bound_Lk_ratio1}{
\hspace{-2ex}
\arraycolsep=0.2em
\begin{array}{lcl}
	\Lb_g^k & =  & \Lb_g (y^{k + 1}) \stackrel{\eqref{eq:in_domH}}\leq L_g \norms{\prox_{\rho_k H^{*}} \left(\tilde{y}^k + \rho_k g(\hat{x}^k)\right)} \vspace{1ex}\\
	& =  & L_g \norms{\prox_{\rho_k H^{*}} \left(\tilde{y}^k + \rho_k g(\hat{x}^k)\right) - \prox_{\rho_k H^{*}} (0)}  \leq L_g (2\norms{ \dot{y}} + \norms{\tilde{y}^k + \rho_k g(\hat{x}^k)}) \vspace{1ex}\\
	& \stackrel{\eqref{eq:Lk_bdd_d_z}}\leq & L_g \Big(\rho_k B_g + \frac{\rho_k}\gamma \left[ \norms{y^0}/\rho_0 + 2(1 - \gamma)B_g \right]\Big).
\end{array}
\hspace{-1ex}
}
Therefore, by the new update rule of $L_k$, similar to \eqref{eq:bound_Lk_ratio}, we can easily show that
\myeqn{
\hspace{-2ex}
\arraycolsep=0.1em
\begin{array}{lcl}
	L_k  - {~} \Lb_g^k - L_f - \frac{\rho_k^2 M_g^2}{\rho_k - \eta_k} 
	& \geq &	\frac{\rho_k}\gamma \left[L_g \left(\frac{\norms{y^0}}{\rho_0} + (2 - \gamma)B_g\right) + M_g^2 - L_g \left(\frac{\norms{y^0}}{\rho_0} + (2 - \gamma)B_g\right) - M_g^2\right] \vspace{0.5ex}\\
	& = & 0.
\end{array}
\hspace{-2ex}
}
Using similar proofs as in Theorem \ref{thm:BigO_ergodic_rate} in Appendix \ref{subsec:alg1_thm_ergodic}, we can see that \eqref{eq:ergodic_thm_L} still holds with $\Lc(x, s, y) := F(x) + \iprods{y,\ g(x) - s}$.
Substituting $(x, s) := (x^{\star}, s^{\star})$ into \eqref{eq:ergodic_thm_L}, we get
\myeqn{
	F(\bar{x}^{k}) + \iprods{y,\ g(\bar{x}^{k}) - \bar{s}^{k}} - F^{\star} \leq \frac{\Rc_0^2 (x^{\star}, y)}{2k}, \ \text{where} \ \ \Rc_0^2(x,y) := L_0\norms{x^0 - x}^2 + \frac{1}{\eta_0}\norms{y^0 - y}^2.
}
Let $\Rc^2_0(y) := \Rc_0^2 (x^{\star}, y)$. 
Then, for any fixed $r > 0$, we have
\myeq{eq:thm1_bdd1}{
	F(\bar{x}^{k}) - F^{\star} \leq F(\bar{x}^{k}) - F^{\star} + r\norms{g(\bar{x}^{k}) - \bar{s}^k} \leq \frac{1}{2k} \sup\sets{\Rc^2_0(y):\ \norms{y} \leq r}.
}
On the other hand, by the saddle-point relation \eqref{eq:saddle_pt_s}, we have
\myeqn{
	F(\bar{x}^k) + \iprods{y^{\star},\ g(\bar{x}^k) - \bar{s}^k} = \Lc(\bar{x}^k, \bar{s}^k, y^{\star}) \geq \Lc(x^{\star}, s^{\star}, y^{\star}) = F^{\star}.
}
By the Cauchy-Schwarz inequality, the last estimate leads to
\myeq{eq:thm1_bdd2}{
	F(\bar{x}^{k}) - F^{\star} \geq -\iprods{y^{\star},\ g(\bar{x}^k) - \bar{s}^k} \geq -\norms{y^{\star}} \norms{g(\bar{x}^{k}) - \bar{s}^{k}}.
}
Substituting \eqref{eq:thm1_bdd2} into \eqref{eq:thm1_bdd1}, we get
\myeqn{
	(r - \norms{y^{\star}})\norms{g(\bar{x}^{k}) - \bar{s}^{k}} \leq \dfrac{1}{2k} \sup \sets{\Rc_0^2 (y):\ \norms{y} \leq r}.
}
Let us choose $r := \norms{y^{\star}} + 1$. 
Since $\bar{s}^k \in -\Kc$ due to \eqref{eq:scheme_s}, $\dom{H} = -\Kc$, and $\Kc$ is convex, the last inequality implies that
\myeq{eq:thm1_bdd3}{
\hspace{-2ex}
\arraycolsep=0.1em
\begin{array}{lcl}
	\kdist{-\Kc}{ g(\bar{x}^k)}  & = & \displaystyle\inf_{s \in -\Kc} \norms{g(\bar{x}^k) - s} \leq \norms{g(\bar{x}^{k}) - \bar{s}^{k}}  \leq  \frac{1}{2k} \sup \sets{\Rc_0^2 (y) : \norms{y} \leq \norms{y^{\star}} + 1} \vspace{1ex} \\
	& \leq & \frac{1}{2k} \left[ L_0 \norms{x^0 - x^{\star}}^2 + \frac{1}{\eta_0} {(\norms{y^0} + \norms{y^{\star}} + 1)}^2\right],
\end{array}
\hspace{-4ex}
}
Combining \eqref{eq:thm1_bdd1}, \eqref{eq:thm1_bdd2}, and \eqref{eq:thm1_bdd3}, we arrive at the conclusion in Statement (a).
Statements (b), (c), and (d) can be proven similarly but using the results of Theorems~\ref{thm:alg2_ergodic}, \ref{thm:BigO_nonegrodic_rate}, and  \ref{thm:BigO_nonegrodic_rate_str}, respectively.
We therefore omit the details here without repeating the process.
\Eproof

\bibliographystyle{plain}

\end{document}